\documentclass{amsart}

\usepackage[utf8]{inputenc}
\usepackage[english]{babel}

\usepackage{fullpage}

\usepackage{amsmath}
\usepackage{amsfonts}
\usepackage{amsthm}
\usepackage{amssymb}
\usepackage{theoremref}
\usepackage{colonequals}
\usepackage{url}
\usepackage{csquotes}
\usepackage{mathrsfs}
\usepackage{graphicx}
\usepackage{tabularray}
\usepackage{accents}

\newcounter{master}
\numberwithin{master}{section}

\theoremstyle{plain}

\newtheorem{theorem}[master]{Theorem}
\newtheorem{proposition}[master]{Proposition}
\newtheorem{corollary}[master]{Corollary}
\newtheorem{question}{Question}
\newtheorem{intro}{Theorem}

\theoremstyle{definition}

\newtheorem{definition}[master]{Definition}
\newtheorem*{notation}{Notation}

\theoremstyle{remark}

\newtheorem*{remark}{Remark}

\makeatletter
\let\c@equation\c@master

\let\c@figure\c@master

\let\c@table\c@master

\makeatother

\makeatletter
\def\@cite#1#2{\textup{[\textbf{#1}\if@tempswa , #2\fi]}}
\def\@biblabel#1{[\textbf{#1}]}
\makeatother

\DeclareMathOperator{\Const}{const}
\DeclareMathOperator{\Conv}{conv}
\DeclareMathOperator{\Vol}{vol}

\makeatletter\def\tr#1{{\reset@font[}#1{\reset@font]}}\makeatother

\def\sref#1{Section~\S$\ref{#1}$}
\def\fref#1{fig.~$\ref{#1}$}

\UseTblrLibrary{booktabs}

\def\dbar{\mathchar'26\mkern-9mu\delta}

\begin{document}

\title{On the homothety conjecture for the body of flotation and the body of buoyancy on a plane}

\author[B. Zawalski]{Bartłomiej Zawalski}
\address[B. Zawalski]{Kent State University, Kent, OH, USA}
\email{bzawalsk@kent.edu}
\thanks{The author is supported in part by U.S. National Science Foundation Grants DMS-1900008 and DMS-2247771.}
\subjclass[2010]{Primary 52A10; Secondary 51N10, 53A04, 53A15, 70C20}
\keywords{homothety conjecture, floating body problem, body of flotation, body of buoyancy, body of illumination}

\begin{abstract}
We investigate several closely related \enquote{homothety conjectures} for convex bodies on a plane. Using the modern language of differential geometry, we systematically derive the fundamental properties of bodies of flotation, bodies of buoyancy, and bodies of illumination. As a direct consequence of our results, we show that if the body of flotation is homothetic to the body of buoyancy, and if every chord of flotation cuts off from the boundary exactly $\frac{1}{3}$ of its total affine arc length, then $K$ is an ellipse. We also provide natural affine counterparts of the classical theorems on the floating body problem from the Scottish Book due to H. Auerbach. In particular, we obtain an affine counterpart of Zindler carousels introduced by J. Bracho, L. Montejano, and D. Oliveros.
\end{abstract}

\maketitle

\tableofcontents

%%%%%%%%%%%%%%%%%%%%%%%%%%%%%%%%%%%%%%%%%%%%%%%%%%%%%%%%%%%%%%%%%%%%%%
\section{Introduction}
%%%%%%%%%%%%%%%%%%%%%%%%%%%%%%%%%%%%%%%%%%%%%%%%%%%%%%%%%%%%%%%%%%%%%%

Let $K\subset\mathbb R^n$ be a convex body, i.e., a compact convex set with non-empty interior. For any $\delta\in(0,\Vol_n(K))$, the convex body of flotation of $K$ is the set of points that remain above the water level when a solid body of shape $K$ and uniform density $\delta/\Vol_n(K)$ floats in any orientation.\\

Floating bodies naturally emerge in such branches of mathematics as affine-invariant geometry \cite{SchuttWerner,Werner_1994}, approximation by polytopes \cite{Barany_Larman_1988,MR4420518}, and statistics \cite{NagySW}, to mention just a few. Even though they have been known since the ancient works of Archimedes, they are still much in demand, if only because of their connection with the symmetry of hyperplanar sections \cite{10.1093/imrn/rnx211,Olov}.\\

The following open problem concerning floating bodies has been extensively studied in the literature:

\begin{question}[Sch\"utt-Werner \cite{Schutt1994}]\thlabel{que:01}
Let $K\subset\mathbb R^n$ be a convex body and let $\delta\in(0,\Vol_n(K))$. Suppose that the body of flotation $F_\delta(K)$ and $K$ are homothetic. Is $K$ an ellipsoid?
\end{question}

\noindent Here, we say that two sets are homothetic if there exists a point and a dilation with center at this point that maps one set to the other. From now on, we will refer to \thref{que:01} as the \emph{homothety conjecture}. To the author's best knowledge, the homothety conjecture was first formulated by C. Sch\"utt and E.M. Werner \cite{Schutt1994}, and was motivated by the work of M. Meyer and S. Reisner \cite{Meyer1989}, generalizing Blaschke's characterization of ellipsoids \cite[Anhang.VII]{BlaschkeKK}. In their proof, the authors used the fact that $K$ is an ellipsoid provided that for every $\delta\in(0,\Vol_n(K))$ the body of flotation $F_\delta(K)$ is homothetic to $K$ with respect to the same center of homothety (cf. \cite[Lemma~3]{Meyer1989}). This seemingly undue assumption was subsequently relaxed. Firstly, C. Sch\"utt and E.M. Werner \cite{Schutt1994} showed that $K$ is an ellipsoid provided that there exists a sequence $\delta_k\to 0^+$ such that for every $k\in\mathbb N$ the body of flotation $F_{\delta_k}(K)$ is homothetic to $K$ with respect to the same center of homothety. Secondly, it follows from the breakthrough work of A. Stancu \cite{Stancu,Stancu_2009} that the assumption that the center of homothety is the same for every $k\in\mathbb N$ can be dropped if the boundary of $K$ is of class $C^2_+$. Finally, E.M. Werner and D. Ye \cite{WernerYe} removed the smoothness assumption, and showed furthermore that $K$ with boundary of class $C^3$ is an ellipsoid provided that there exists a single sufficiently small $\delta<\delta_*(K)$ such that the body of flotation $F_\delta(K)$ is homothetic to $K$. Recently, M. Angeles Alfonseca, F. Nazarov, D. Ryabogin, A. Stancu, and V. Yaskin \cite{ANRSY} showed that on a plane, the homothety conjecture is false and constructed counterexamples for densities $\delta/\Vol_2(K)$ in some countable set with accumulation point at $\frac{1}{2}$. On the other hand, they proved that the conjecture holds for any density $\delta/\Vol_n(K)\in(0,1)$, provided that $K$ is symmetric and sufficiently close to the Euclidean ball.\\

% \begin{remark}
% The statement:
% \begin{displayquote}
% \it For every convex body $K$, there exists $\delta_*(K)>0$, such that for every $0<\delta<\delta_*(K)$, if the body of flotation $F_\delta(K)$ is homothetic to $K$ then $K$ is an ellipsoid.
% \end{displayquote}
% that appears in both \cite[Theorem~1]{Stancu} and \cite[Theorem~4.1]{WernerYe} is logically equivalent to the statement:
% \begin{displayquote}
% \it For every convex body $K$, if there exist a sequence $\delta_k\to 0^+$ such that for every $k\in\mathbb N$, the body of flotation $F_{\delta_k}(K)$ is homothetic to $K$ then $K$ is an ellipsoid.
% \end{displayquote}
% Therefore, effectively all three results \cite{Schutt1994,Stancu,WernerYe} assume the existence of an infinite sequence of bodies of flotation $F_{\delta_k}(K)$ homothetic to $K$. However, in \cite[\S 5]{WernerYe} the authors also provide an explicit value of the threshold $\delta_*(K)$ for $K$ with boundary of class $C^3$. In this case, the logical equivalence does not hold anymore.
% \end{remark}

It was a breakthrough idea of C. Dupin reflected in his memoir \cite{dupin1822applications} that any study of flotation and stability must consider three inextricably related bodies: the body of flotation, the body of buoyancy, and the body $K$ itself. Therefore, the following questions in the spirit of \thref{que:01} are of independent interest:

\begin{question}\thlabel{que:02}
Let $K\subset\mathbb R^n$ be a convex body, and let $\delta\in(0,\Vol_n(K))$ be such that the body of buoyancy $B_\delta(K)$ and $K$ are homothetic. Is $K$ an ellipsoid?
\end{question}

\begin{question}\thlabel{que:03}
Let $K\subset\mathbb R^n$ be a convex body, and let $\delta\in(0,\Vol_n(K))$ be such that the body of flotation $F_\delta(K)$ and the body of buoyancy $B_\delta(K)$ are homothetic. Is $K$ an ellipsoid?
\end{question}

It is worth noting that all three \thref{que:01,que:02,que:03} are closely related to the classical open problems formulated by H. Busemann and C.M. Petty \cite{BusemannPetty}. Moreover, if $K\subset\mathbb R^2$ is a symmetric convex body with density $\delta/\Vol_2(K)=\frac{1}{2}$, then \thref{que:01} becomes simply equivalent to \cite[Problem~5]{BusemannPetty}, and \thref{que:02} becomes simply equivalent to \cite[Problem~8]{BusemannPetty}, although in both cases this observation is by no means trivial. In comments that followed, the authors of \cite{BusemannPetty} claimed that the answer to Problem 5 on a plane is already known to be negative, while the answer to Problem 8 on a plane is affirmative. However, to the author's best knowledge, the latter proof has never been published. An independent proof may be found, e.g., in the paper of V. Balestro, H. Martini, and E. Shonoda \cite[Theorem~6.2]{BALESTRO2019347}.\\

From the point of view of the idea first brought up by M. Angeles Alfonseca, F. Nazarov, D. Ryabogin, and V. Yaskin in an expository paper \cite{ANRY} and then applied to the homothety conjecture by the same authors along with A. Stancu in \cite{ANRSY}, both \thref{que:01,que:02} may be viewed as fixed-point problems for certain contraction operators. Hence, we strongly believe that an approach similar to the one that has already proven effective for \thref{que:01} may also be applied to solve \thref{que:02}, provided that $K$ is sufficiently close to the Euclidean ball. Interestingly, we learned from personal communication with V. Shaw that, unlike \thref{que:01}, the answer to \thref{que:02} is positive for any $K$ in this regime. But \thref{que:03} already eludes the method of \cite{ANRSY}, because in that case contraction operators appear on both sides of equality. In this paper, we fill this very gap. Our main results are as follows:

\begin{intro}
Let $K\subset\mathbb R^2$ be a convex body with boundary of class $C^2$, and let $\delta\in(0,\Vol_n(K))$. If $F_\delta(K)$ is homothetic to $B_\delta(K)$, then the affine distance between the endpoints of every chord of flotation is constant.
\end{intro}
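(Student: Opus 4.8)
The plan is to reduce the statement to a comparison of the curvatures of the two boundary curves $\partial F_\delta(K)$ and $\partial B_\delta(K)$ at the pair of points that a single chord of flotation attaches to them, and then to let the homothety force these curvatures into a fixed ratio. Fix a chord of flotation with endpoints $P,Q\in\partial K$; write $M=\tfrac12(P+Q)$ for its midpoint and let $G$ be the centroid of the region (of area $\delta$) that it cuts off from $K$. By the properties of the two bodies established in the earlier sections (Dupin's theorem), $M\in\partial F_\delta(K)$ with the chord tangent to $\partial F_\delta(K)$ at $M$, and $G\in\partial B_\delta(K)$ with the tangent to $\partial B_\delta(K)$ at $G$ parallel to the chord; let $\rho_F$, $\rho_B$ be the radii of curvature of $\partial F_\delta(K)$ at $M$ and of $\partial B_\delta(K)$ at $G$. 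The two ingredients I would invoke are the curvature identities, valid at every chord of flotation,
\[
\rho_F\cdot d_{\mathrm{aff}}(P,Q)^{3}=|PQ|^{3}\qquad\text{and}\qquad\rho_B=\frac{|PQ|^{3}}{12\delta},
\]
where $d_{\mathrm{aff}}(P,Q)$ is the affine distance between the endpoints. Both belong to the basic theory of flotation and buoyancy bodies developed earlier, so in the proof they are simply cited; for orientation: the second is Bouguer's metacentric formula $\rho_B=\tfrac23 c^{3}/\delta$ (with $c=|PQ|/2$), which comes from the first variation of $G$ as the chord turns about $M$ --- the pivot being $M$ by Dupin's theorem, since the cut-off area is fixed. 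The first is the affine form of the classical radius-of-curvature formula for the curve of flotation: writing $\rho_F=|\mathrm{d}M/\mathrm{d}\phi|$ and expressing $\mathrm{d}P/\mathrm{d}\phi$, $\mathrm{d}Q/\mathrm{d}\phi$ via the angles the tangents to $\partial K$ at $P,Q$ make with the chord (pinned down by the unit-turning-rate normalization and Dupin) gives $\rho_F\,\tau=c^{3}$ with $\tau$ the area of the triangle cut from $\partial K$ by the chord and the two tangents; and $d_{\mathrm{aff}}(P,Q)^{3}=8\tau$ because the affine distance between $P$ and $Q$ is the affine arc length of the parabola tangent to $\partial K$ at both of them.

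Granting these, the finish is short. Dividing the two identities gives, at every chord of flotation, $\rho_B/\rho_F=d_{\mathrm{aff}}(P,Q)^{3}/(12\delta)$. Now I bring in the hypothesis: let $\eta$ be a homothety with $\eta\bigl(F_\delta(K)\bigr)=B_\delta(K)$, say of ratio $\lambda$. Both $\partial F_\delta(K)$ and $\partial B_\delta(K)$ are convex, and at $M$, resp.\ at $G$, the outward normal points to the submerged side of the chord --- on $\partial F_\delta(K)$ because all of $F_\delta(K)$ lies on the emerged side of every chord of flotation, and on $\partial B_\delta(K)$ because its center of curvature at $G$, the metacenter, lies on the emerged side of $G$. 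Hence $\eta$ has positive ratio and carries each point $M$ to the point $G$ attached to the \emph{same} chord, scaling radii of curvature by $\lambda$: $\rho_B=\lambda\,\rho_F$. Comparing with the displayed ratio yields $d_{\mathrm{aff}}(P,Q)^{3}=12\lambda\delta$ at every chord of flotation, and since $\lambda$ and $\delta$ are fixed, this is the asserted constancy.

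The substance of the argument is carried by the two curvature identities, which is where the differential-geometric machinery of the earlier sections does its work (including whatever regularity of $\partial F_\delta(K)$ and $\partial B_\delta(K)$ is needed to speak of their curvature when $\partial K$ is only of class $C^{2}$). The one genuinely delicate point in the argument proper is the matching in the last step: I must rule out that $\eta$ sends $M$ to the buoyancy point of the chord of flotation in the opposite orientation, i.e.\ the parallel chord cutting off $\delta$ from the other side of $K$. I expect the outward-normal comparison above (equivalently, the explicit parametrizations of the two boundary curves from the earlier sections) to settle this; in the orientation-reversing alternative the same computation would give only that the product of the affine distances across a pair of oppositely oriented chords of flotation is constant, which is strictly weaker.
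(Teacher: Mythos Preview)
Your proposal is correct and follows essentially the same route as the paper: the paper derives the velocity relation $\dot{\boldsymbol r}_2=\dot{\boldsymbol r}_1\cdot\|\boldsymbol c\|^3/(12\delta)$ (equivalent to your curvature ratio $\rho_B/\rho_F=\|\boldsymbol c\|^3/(12\delta)$ after cancelling $|\boldsymbol c|^3$), observes that parallel tangents force $\boldsymbol r_1(s)$ and $\boldsymbol r_2(s)$ to correspond under the homothety, and reads off $\|\boldsymbol c\|^3=12\delta\lambda$. Your worry about the orientation-reversing matching is more cautious than the paper, which dispatches the pairing in one sentence via the parallel-tangent observation; in fact the velocity formulation makes this slightly cleaner, since $\dot{\boldsymbol r}_2$ is a \emph{positive} multiple of $\dot{\boldsymbol r}_1$, forcing the natural parametrizations to traverse the two curves with the same orientation.
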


\begin{intro}
Let $K\subset\mathbb R^2$ be a convex body with boundary of class $C^2$, and let $\delta\in(0,\Vol_n(K))$. If $F_\delta(K)$ is homothetic to $B_\delta(K)$, then the affine arc length of the boundary cut off from $K$ by every chord of flotation is constant.
\end{intro}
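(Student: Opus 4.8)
The plan is to build on the previous theorem (Theorem 2), which already establishes that the affine distance between the endpoints of every chord of flotation is constant. So I would reduce the present statement to showing that along the body of flotation, the affine arc length of the boundary arc cut off by a chord of flotation is governed by — and in fact determined by — the affine distance between its endpoints, at least up to an additive constant along the one-parameter family of chords.

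**Key steps:**

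First I would recall the parametrization of the chords of flotation by the boundary point where one endpoint lies, or better, by the affine arc length parameter $s$ along $\partial K$; write the two endpoints as $p(s)$ and $q(s)$, with $q(s) = p(\sigma(s))$ for a suitable monotone reparametrization $\sigma$. The chord-of-flotation condition (constant area cut off) together with the standard first-variation formula for the enclosed area — namely that the derivative of the cut-off area with respect to $s$ is a linear combination of the affine support quantities at the two endpoints — pins down $\sigma'(s)$ in terms of the affine curvatures at $p(s)$ and $q(s)$. Next I would differentiate the cut-off affine arc length $L(s) = \int_{s}^{\sigma(s)} \mathrm{d}(\text{affine arc length})$, obtaining $L'(s) = \sigma'(s) - 1$, so that proving $L$ is constant amounts to showing $\sigma'(s) \equiv 1$, i.e., the affine-arc-length parametrization is preserved by the chord correspondence. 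Here is where I would invoke Theorem 2: the affine distance $\mathrm{d}_{\mathrm{aff}}(p(s), q(s))$ is constant in $s$. Differentiating that constancy gives a second relation among $\sigma'(s)$, the affine curvatures, and the affine normals at the two endpoints. I would then combine this with the area relation; the two together should force $\sigma'(s) = 1$ pointwise, hence $L' \equiv 0$.

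**Main obstacle:**

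The hard part will be the bookkeeping for the first variation of affine arc length and of affine distance — getting the right covariant expressions, because affine arc length is a third-order quantity in the boundary parametrization and the affine distance between two non-incident boundary points is a somewhat delicate affine invariant (built from the area of the triangle formed by the point, the other point, and the relevant affine-normal directions). I expect the delicate point is verifying that the system obtained from differentiating the two constancy conditions is non-degenerate — i.e. that constancy of affine distance plus constancy of cut-off area genuinely implies $\sigma' \equiv 1$ rather than merely constraining it. One should double-check there is no degenerate family of chords (a ``free'' direction) along which both conditions hold with $\sigma' \neq 1$; ruling this out will likely use the $C^2$ regularity of $\partial K$ and the strict convexity that it entails for the relevant affine quantities. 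Once $\sigma' \equiv 1$ is in hand, $L(s)$ is constant by the fundamental theorem of calculus, and since the homothety hypothesis was already fully consumed in proving Theorem 2, no further use of it is needed here.
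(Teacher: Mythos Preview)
Your plan is correct and is essentially the paper's argument in different packaging. The paper does not invoke Theorem~A as a black box; instead it writes down the homothety conditions at the level of the curvatures of $\Pi_\delta(K)$ and $\Gamma_\delta(K)$ and their arc-length derivatives (two equations, the first of which is equivalent to $\|\boldsymbol c\|^3=\mathrm{const}$), eliminates the ratio $\lambda$, and obtains the pointwise relation $\sin^3\alpha/\kappa(s)=\sin^3\beta/\kappa(t)$; it then shows directly that this relation makes the $s$-derivative of the cut-off affine arc length vanish. In your language (affine-arc-length parameter on $\partial K$), that relation reads $\det(\boldsymbol c,\gamma'(s))=-\det(\boldsymbol c,\gamma'(t))$, which together with the area constraint $\sigma'(s)=-\det(\boldsymbol c,\gamma'(s))/\det(\boldsymbol c,\gamma'(t))$ is exactly $\sigma'(s)=1$. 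So the two routes coincide computationally.

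Two small corrections. First, the affine distance $\|\boldsymbol c\|$ between the two boundary linear elements is built from the \emph{tangent} directions at the endpoints (it is $2T^{1/3}$ with $T$ the signed area of the triangle bounded by the chord and the two tangent lines), not the affine normals. Second, your non-degeneracy worry is unfounded: if you set $a=\det(\boldsymbol c,\gamma'(s))$, $b=\det(\boldsymbol c,\gamma'(t))$, then differentiating $\|\boldsymbol c\|^3=-4ab/\det(\gamma'(s),\gamma'(t))=\mathrm{const}$ under the constraint $\sigma'=-a/b$ collapses, after expanding $\gamma''(s),\gamma''(t)$ in the bases $\{\gamma'(s),\boldsymbol c\}$ and $\{\gamma'(t),\boldsymbol c\}$, to the single equation $b^3\det(\gamma'(s),\gamma''(s))+a^3\det(\gamma'(t),\gamma''(t))=0$, which (both determinants being positive) forces $a=-b$ in the affine-arc-length parameter, i.e.\ $\sigma'\equiv 1$.
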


\begin{intro}
Let $K\subset\mathbb R^2$ be a convex body with boundary of class $C^2$, and let $\delta\in(0,\Vol_n(K))$. If $F_\delta(K)$ is homothetic to $B_\delta(K)$, and if $\delta$ is such that every chord of flotation cuts off from the boundary exactly $\frac{1}{3}$ of its total affine arc length, then $K$ is an ellipse.
\end{intro}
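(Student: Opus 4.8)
\textit{Set-up.} By the second of the theorems above together with the hypothesis, I would first parametrize $\partial K$ by affine arc length, $s\mapsto p(s)$, $s\in\mathbb R/L\mathbb Z$ with $L$ the total affine perimeter, in such a way that the chords of flotation are precisely the segments $[p(s),p(s+L/3)]$. By definition each such chord cuts off from $K$ a region of a fixed area; by the first theorem above the affine distance between $p(s)$ and $p(s+L/3)$ is a constant, say $\ell$, independent of $s$. Since $3\cdot\tfrac L3=L$, for every $s$ the three points $p(s)$, $p(s+\tfrac L3)$, $p(s+\tfrac{2L}3)$ cut $\partial K$ into three arcs of equal affine length, each side of the inscribed triangle $T(s):=\triangle\bigl(p(s),p(s+\tfrac L3),p(s+\tfrac{2L}3)\bigr)$ is a chord of flotation, and $K$ is the essentially disjoint union of $T(s)$ with the three caps these chords cut off. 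Consequently $\Vol_2\bigl(T(s)\bigr)=\Vol_2(K)-3c$, where $c$ is the common area of each of the three caps, and all three affine side-distances of $T(s)$ equal $\ell$; in particular both data are independent of $s$.

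\textit{Plan.} Writing the fixed-area condition through the shoelace formula and differentiating in $s$ yields the identity $\bigl[\,p(s+\tfrac L3)-p(s),\ p'(s)+p'(s+\tfrac L3)\,\bigr]=0$, i.e.\ the midpoint $m(s)=\tfrac12\bigl(p(s)+p(s+\tfrac L3)\bigr)$, which lies on the boundary of the body of flotation, moves along the chord direction (Dupin). The goal is to combine this with the constant-affine-distance hypothesis to force the affine curvature $\kappa$ of $\partial K$ to be constant, which makes $\partial K$ a closed convex curve of constant affine curvature, hence an ellipse (the solutions of $p'''=-\kappa_0 p'$ that close up being precisely the affinely parametrized ellipses). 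I would carry this out by expanding $p(s+\tfrac L3)$, $p'(s+\tfrac L3)$, $p''(s+\tfrac L3)$ in the affine Frenet frame $\{p'(s),p''(s)\}$ of $\partial K$ (using $[p',p'']\equiv1$, $p'''=-\kappa p'$), so that the fixed-area relation, the three constant-affine-distance relations for the sides of $T(s)$, and the closing-up constraint $p(s+L)=p(s)$ become a closed first-order system of ODE for $\kappa$ and the coefficient functions; the expectation is that — precisely because the shift is $L/3$ and the triple closes up into a single triangle — this system collapses to $\kappa'\equiv0$. An equivalent way to organize the endgame is to prove that $s\mapsto p'(s)$ is the restriction of an affine vector field $x\mapsto Mx+v$ with $\operatorname{tr}M=0$ tangent to $\partial K$, i.e.\ that $\partial K$ admits a nontrivial one-parameter group of equiaffine self-maps; since the only planar convex curves with such a symmetry are ellipses, this closes the argument. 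A tractable intermediate case is worth isolating: if the centroid $\tfrac13\bigl(p(s)+p(s+\tfrac L3)+p(s+\tfrac{2L}3)\bigr)$ of $T(s)$ turns out to be independent of $s$, then $p$ satisfies the delay relation $p(s+\tfrac L3)+p(s)+p(s-\tfrac L3)=\text{const}$, whose $L$-periodic solutions are supported on the Fourier modes $k\equiv\pm1\pmod 3$, and adding $[p',p'']\equiv1$ and convexity collapses these to one mode — an affinely regular parametrization of an ellipse.

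\textit{Main obstacle.} The difficulty is the passage from the triangles to the whole curve. An equiaffine map of the plane is pinned down by the images of three points, so the mere fact that the inscribed triangles $T(s)$ match up — even with equal areas \emph{and} equal affine side-distances — does not control the images of the intermediate points of $\partial K$; a curve with three-fold affine symmetry already has all of that triangle data constant without being an ellipse. Hence the constant-affine-distance hypothesis (the first theorem above) must enter in an essential, non-cosmetic way, and extracting from it the extra rigidity — equivalently, disentangling the ODE system and checking that it indeed forces $\kappa'\equiv0$ — is the technical heart of the proof. This is also where the value $\tfrac13$ is used decisively: it is exactly what makes the triple $p(s),p(s+\tfrac L3),p(s+\tfrac{2L}3)$ close up into a single triangle — and, three at a time, makes consecutive chords of flotation bound a triangle of constant area — a structure that has no analogue for other proportions of the affine perimeter.
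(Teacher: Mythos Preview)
Your set-up is correct and matches the paper's: the three arcs of equal affine arc length give an inscribed triangle $T(s)$ whose sides are all chords of flotation, hence have constant affine distance $\ell$ and cut off caps of equal area. But the proof you outline is not complete: you yourself identify the obstacle (any curve with a three-fold affine symmetry already makes the triangle data constant), and you do not carry out the ODE reduction that would force $\kappa'\equiv 0$. The ``intermediate case'' you isolate --- that the centroid of $T(s)$ be fixed --- is not a special case to be hoped for; it is exactly what has to be \emph{proved}, and your Fourier sketch for deducing ellipticity from the delay relation $p(s)+p(s+\tfrac{L}{3})+p(s+\tfrac{2L}{3})=\text{const}$ together with $[p',p'']\equiv 1$ is not justified (convex $L$-periodic solutions supported on the modes $k\equiv\pm1\pmod 3$ need not collapse to a single mode without further input).

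The paper closes the gap by a synthetic argument you are not using. It invokes not merely the constancy of the cut-off affine arc length but its differential form, namely $\dfrac{\sin^3\alpha}{\kappa(s)}=\dfrac{\sin^3\beta}{\kappa(t)}$ at the two endpoints of every chord of flotation. Applying this at each of the three vertices of $T(s)$, together with the equality of the three cap areas (hence of the three tangent-triangle corners), forces the tangent triangle $\triangle\hat{\boldsymbol x}\hat{\boldsymbol y}\hat{\boldsymbol z}$ to have $\boldsymbol x,\boldsymbol y,\boldsymbol z$ as the midpoints of its sides; equivalently, the tangent line to $\partial K$ at each vertex is parallel to the opposite side of $T(s)$. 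These parallelism relations are exactly what is needed to compute $\mu'(s)=\tfrac13(\boldsymbol x'+\boldsymbol y'+\boldsymbol z')=\boldsymbol 0$ directly, so the centroid is fixed for every $s$. Finally, rather than deducing $\kappa'\equiv 0$, the paper observes that the tangent triangle vertex $\hat{\boldsymbol x}$ lies on the illumination curve $\Pi^{\hat\delta}(K)$ and is the image of the midpoint $\check{\boldsymbol x}\in\Pi_\delta(K)$ under the homothety of ratio $4$ centred at $\mu$; thus $\Pi^{\hat\delta}(K)$, $\Pi_\delta(K)$ and $\Gamma_\delta(K)$ are pairwise homothetic, and an earlier theorem (affine normals of $\Gamma_\delta$ meeting at one point $\Rightarrow$ proper affine sphere $\Rightarrow$ ellipse) finishes the job. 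The missing step in your plan is precisely this tangent-triangle/medial-triangle rigidity; without it the ODE system does not obviously close.
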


\noindent The smoothness assumption seems to be superfluous, and we believe it can be relaxed, especially since the problem has an inherent smoothing property. However, it would most probably require a long-winded, technical argument, which would overshadow the main idea of the paper.\\

\begin{figure}
\includegraphics{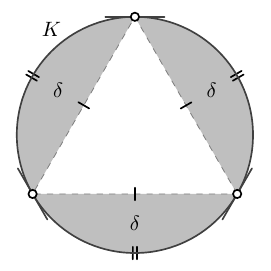}
\caption{A drawing illustrating the main theorems}
\label{fig:08}
\end{figure}

The above theorems are similar in spirit to the results of H. Auerbach \cite{Auerbach1938} and J. Bracho, L. Montejano, and D. Oliveros \cite{Bracho2004}, closely related to the floating body problem:

\begin{question}[{\cite[19. Problem: Ulam]{scottish}}]
Is a solid of uniform density which will float in water in every position a sphere?
\end{question}

\noindent H. Auerbach showed that on a plane, the length of every chord of flotation is constant, and the arc length of the boundary cut off from $K$ by every chord of flotation is likewise constant. Using his work, J. Bracho, L. Montejano, and D. Oliveros showed that except for the circle, there are no figures that float in equilibrium in every orientation with perimetral densities $\frac{1}{3}$ and $\frac{1}{4}$, and sketched the argument for perimetral densities $\frac{1}{5}$ and $\frac{2}{5}$. Our conclusions, however, follow from a completely different set of assumptions. Importantly, while the floating body problem is covariant under the action of the Euclidean group $\mathbb R^2\rtimes\mathrm{O}(\mathbb R^2)$, the homothety conjecture is covariant under the action of the general affine group $\mathbb R^2\rtimes\mathrm{GL}(\mathbb R^2)$. Thus, our theorems provide natural counterparts of results from \cite{Auerbach1938,Bracho2004} in the affine setting.\\

The purpose of this paper is twofold. Firstly, we systematically derive all the fundamental results for bodies of flotation and bodies of illumination, and present them all together in a coherent way. Secondly, we prove several new theorems related to the homothety conjecture as formulated in \thref{que:03}. Although the formulas for bodies of flotation are mostly classical and can be found in many publications on the subject, the analogous formulas for bodies of illumination are hard to find anywhere, and some of them we could not find at all. Our purely algebraic approach allows us to easily obtain formulas constraining higher-order derivatives, which is usually not the case with the commonly used geometric approach. \thref{que:03} emerges naturally in studies concerning the symmetry of hyperplanar sections, but seems not to have been considered so far.\\

The structure of the paper is the following: In \sref{sec:01}, for the reader's convenience, we briefly introduce some basic concepts from differential geometry and fluid statics, playing an important role in our argument. In \sref{sec:05}, we construct a parametrization of the body of flotation, the body of illumination, and the corresponding centroid curves, and we derive all necessary formulas in a uniform, systematic way. Finally, in \sref{sec:06}, we give the complete proofs of the new results.

%%%%%%%%%%%%%%%%%%%%%%%%%%%%%%%%%%%%%%%%%%%%%%%%%%%%%%%%%%%%%%%%%%%%%%
\section{Definitions and basic concepts}\label{sec:01}
%%%%%%%%%%%%%%%%%%%%%%%%%%%%%%%%%%%%%%%%%%%%%%%%%%%%%%%%%%%%%%%%%%%%%%

In this section, we recall several classical results from differential geometry and fluid statics that we will use in our proofs. We also refer an interested reader to \cite{do2016differential} for \sref{sec:02}, to \cite[\S I.1]{nomizu1994affine} and \cite[\S I.3]{Buchin} for \sref{sec:03}, and to \cite[Ch.~XXIV]{de1924lecons} and \cite{ARSY} for \sref{sec:04}. Adopting the standard convention in differential geometry textbooks, for notational simplicity, we will omit the function parameters when they are clear from the context.

\subsection{Riemannian differential geometry}\label{sec:02}

Denote by $\mathbb R^2$ the set of pairs $(x,y)$ of real numbers. We say that a real function is \emph{of class $C^k$} if it has, at all points, a continuous derivative of order $k$. A \emph{parametrized differentiable curve} is a differentiable map $\gamma:I\to\mathbb R^2$ of an open interval $I=(a,b)$ of the real line $\mathbb R$ into $\mathbb R^2$. The vector $\gamma'(s)=\mathrm d\gamma/\mathrm ds$ is called the \emph{tangent vector} of the curve $\gamma$ at $s$. For more details, see \cite[\S 1-2]{do2016differential}.\\

Let $\boldsymbol u=(u_1,u_2)$ and $\boldsymbol v=(v_1,v_2)$ belong to $\mathbb R^2$, and let $\vartheta$, $0\leq\vartheta\leq\pi$ be the oriented angle between $\boldsymbol u$ and $\boldsymbol v$. Denote the \emph{norm} (or \emph{length}) of $\boldsymbol u$ by $|\boldsymbol u|$. The \emph{inner product} $\boldsymbol u\cdot\boldsymbol v$ is defined by $\boldsymbol u\cdot\boldsymbol v=u_1v_1+u_2v_2$ and is equal to $|\boldsymbol u||\boldsymbol v|\cos\vartheta$. The \emph{determinant} $\det(\boldsymbol u,\boldsymbol v)$ is defined by $\det(\boldsymbol u,\boldsymbol v)=u_1v_2-u_2v_1$ and is equal to $|\boldsymbol u||\boldsymbol v|\sin\vartheta$. For more details, see \cite[\S 1-2]{do2016differential}. Vector symbols will be typeset in bold face so that they can be clearly distinguished.\\

A parametrized differentiable curve $\gamma:I\to\mathbb R^2$ is said to be \emph{regular} if $\gamma'(s)\neq\boldsymbol 0$ for all $s\in I$ \cite[\S 1-3]{do2016differential}. Let $\gamma:I\to\mathbb R^2$ be a curve parametrized by arc length $s$. Denote by $\boldsymbol t(s)\colonequals\gamma'(s)$ the \emph{unit tangent vector} of $\gamma$ at $s$. The number $\kappa(s)\colonequals|\gamma''(s)|$ is called the \emph{curvature} of $\gamma$ at $s$. At points where $\kappa(s)\neq 0$, the \emph{unit normal vector} is well-defined by the formula
\begin{equation}\label{eq:28}\dot{\boldsymbol t}(s)=\kappa(s)\boldsymbol n(s).\end{equation}
For more details, see \cite[\S 1-5]{do2016differential}.\\

In the case of a plane curve $\gamma:I\to\mathbb R^2$, it is possible to give the curvature $\kappa$ a sign. If $\gamma:I\to\mathbb R^2$ is a regular parametrized curve of class $C^2$, the \emph{oriented curvature} of $\gamma$ at $s$ is defined by
\begin{equation}\label{eq:19}\kappa=\frac{\det(\gamma',\gamma'')}{|\gamma'|^3}\end{equation}
(see \cite[\S 1-5, Remark~1 and Exercise~12]{do2016differential}).\\

A \emph{closed plane curve} of class $C^k$ is a regular parametrized curve $\gamma:[a,b]\to\mathbb R^2$ such that $\gamma$ and all its derivatives agree at $a$ and $b$. The curve $\gamma$ is \emph{simple} if it has no further self-intersections. A simple closed curve $\gamma$ on a plane bounds a region of this plane that is called the \emph{interior} of $\gamma$. It follows from Jordan curve theorem \cite[\S 5-6, Theorem~1]{do2016differential}.\\

The negative derivative $\boldsymbol S(\boldsymbol v)=-D_{\boldsymbol v}\boldsymbol n$ of the unit normal vector field of a surface is an extrinsic curvature, known as the \emph{shape operator} or \emph{Weingarten map}. The eigenvalues of $\boldsymbol S$ correspond to the principal curvatures of the surface, and the eigenvectors are the corresponding principal directions. In particular, the Gaussian curvature is given by the determinant of $\boldsymbol S$ (see \cite[\S 3-3]{do2016differential}).

\subsection{Affine differential geometry}\label{sec:03}

In this section, we will outline the geometric properties of plane curves expressed by conditions invariant under the group of unimodular affine transformations $\mathbb R^2\rtimes\mathrm{SL}(\mathbb R^2)$. Let's consider a smooth curve $\boldsymbol x=\boldsymbol x(s)$. We denote by $\dot{\boldsymbol x}$ the tangent vector $\mathrm d\boldsymbol x/\mathrm ds$. A given point and a given direction herein form a \emph{linear element}. Two linear elements $(\boldsymbol x,\dot{\boldsymbol x})$ and $(\boldsymbol y,\dot{\boldsymbol y})$ with non-parallel directions determine a triangle with signed area
$$T\colonequals\frac{1}{2}\frac{\det(\dot{\boldsymbol x},\boldsymbol y-\boldsymbol x)\det(\boldsymbol y-\boldsymbol x,\dot{\boldsymbol y})}{\det(\dot{\boldsymbol x},\dot{\boldsymbol y})}.$$
The signed \emph{affine distance} $\|\boldsymbol y-\boldsymbol x\|$ of two linear elements $(\boldsymbol x,\dot{\boldsymbol x})$ and $(\boldsymbol y,\dot{\boldsymbol y})$ is defined as
$$\|\boldsymbol y-\boldsymbol x\|\colonequals 2T^{\frac{1}{3}}$$
(cf. \cite[\S I.3]{Buchin}).\\

We say that the curve is \emph{non-degenerate} if it satisfies the condition $\det(\dot{\boldsymbol x},\ddot{\boldsymbol x})\neq 0$. A non-degenerate curve $\boldsymbol x$ admits a parameter $\sigma$ such that
\begin{equation}\label{eq:09}\det(\boldsymbol x',\boldsymbol x'')=1,\end{equation}
where $'$ denotes the derivative with respect to $\sigma$. Such a parameter, called the \emph{affine arc length parameter}, is unique up to a constant. Namely,
\begin{equation}\label{eq:07}\sigma(s)=\int_{s_0}^s\det(\dot{\boldsymbol x},\ddot{\boldsymbol x})^{\frac{1}{3}}\;\mathrm ds.\end{equation}
By differentiating \eqref{eq:09} we have $\det(\boldsymbol x',\boldsymbol x''')=0$. Hence, the two vectors are linearly dependent, i.e., as $\boldsymbol x'\neq 0$, $\boldsymbol x'''+k\boldsymbol x'=0$, where $k(\sigma)$ denotes a scalar quantity. The last equation yields
$$k(\sigma)=\det(\boldsymbol x'',\boldsymbol x''').$$
We call $k$ the \emph{affine curvature}, and $\boldsymbol x''$ the \emph{affine normal vector}. Note that the affine arc length parameter and the affine curvature are invariant, and the affine normal vector is covariant under an equi-affine transformation of a plane (cf. \cite[\S I.1]{nomizu1994affine} and \cite[\S I.3]{Buchin}).\\

A Blaschke hypersurface is called an \emph{improper affine hypersphere} if all the affine normals are parallel. If all the affine normals meet at one point, then the hypersurface is called a \emph{proper affine hypersphere} (cf. \cite[Definition~II.3.3 and Proposition~II.3.5]{nomizu1994affine}). The following classical result is due to W. Blaschke and A. Deicke:
\begin{theorem}[{\cite[Theorem~III.7.5]{nomizu1994affine}}]\thlabel{thm:04}
If a connected, compact, non-degenerate hypersurface is either a proper or improper affine hypersphere, then it must be an ellipsoid.
\end{theorem}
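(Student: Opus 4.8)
The plan is to follow the classical argument of W. Blaschke and A. Deicke: pass to the Blaschke structure, exclude the improper case by convexity, show that a compact proper affine hypersphere has constant positive affine mean curvature, prove that its Fubini--Pick cubic form vanishes identically by a Bochner-type argument, and finally recognize a non-degenerate hypersurface with vanishing cubic form as an ellipsoid via the Maschke--Pick--Berwald theorem.

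\emph{Step 1: reduction to the elliptic proper case.} A connected, compact, non-degenerate hypersurface $M\subset\mathbb R^{n+1}$ has definite affine second fundamental form, hence for $n\ge 2$ it is (by a theorem of Hadamard) the embedded boundary of a convex body $K$, and its Blaschke metric $h$ is positive definite; we may therefore use the affine normal field $\xi$, which points to the concave side, i.e.\ into $K$, the Fubini--Pick cubic form $C$, and the affine shape operator $S$ defined by $D_X\xi=-SX$. If $M$ were an \emph{improper} affine hypersphere, $\xi$ would be a constant vector $\boldsymbol e$; but at the point $x^{\ast}\in\partial K$ maximizing $x\mapsto x\cdot\boldsymbol e$ the ray $x^{\ast}+\mathbb R_{>0}\boldsymbol e$ immediately leaves $K$, contradicting that $\xi(x^{\ast})=\boldsymbol e$ points into $K$. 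So $M$ is a \emph{proper} affine hypersphere; translating its center to the origin, $\xi(x)=-\lambda(x)\,x$ with $x\neq\boldsymbol 0$ on $M$. Differentiating and using $D_X x=X$ gives $SX=\lambda X+(X\lambda)\,x$, and since $SX$ and $\lambda X$ are tangent to $M$ while $x$ is transverse, being parallel to $\xi$, we must have $X\lambda\equiv 0$; thus $\lambda\equiv H$ is constant, $\xi=-Hx$, and $H>0$ because $\xi$ points into $K\ni\boldsymbol 0$. Hence $S=H\,\mathrm{Id}$ with $H>0$ constant. (For $n=1$ the improper case similarly produces an arc of a parabola or a line, which cannot be closed, while in the proper case the curve has constant nonzero affine curvature $k$; integrating $\boldsymbol x'''+k\boldsymbol x'=0$ and translating gives $\boldsymbol x''=-k\boldsymbol x$, whose solutions are unbounded for $k\le 0$, so $k>0$ and $\boldsymbol x$ traces an ellipse, settling the curve case outright.)

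\emph{Step 2: vanishing of the cubic form.} For $n\ge 2$ I would run the Bochner technique on the compact Riemannian manifold $(M,h)$. For an affine sphere the cubic form is a Codazzi tensor, so combining the Gauss and Codazzi equations with the Weitzenböck formula for $\nabla^{\ast}\nabla C$ yields an identity of the form
\[\tfrac{1}{2}\,\Delta\|C\|^{2}=\|\nabla C\|^{2}+c_{n}H\,\|C\|^{2}+Q(C),\]
with $c_{n}>0$ a dimensional constant and $Q$ a universal quartic in $C$ coming from the curvature terms. Integrating over $M$ kills the left-hand side, and an integration-by-parts argument controlling the quartic part — this is where Calabi's algebraic inequality for trace-free symmetric cubic forms is used — shows $\int_{M}(\|\nabla C\|^{2}+Q(C))\,dV_{h}\ge 0$; hence $H\int_{M}\|C\|^{2}\,dV_{h}\le 0$, and since $H>0$ we conclude $C\equiv 0$.

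\emph{Step 3: conclusion.} By the Maschke--Pick--Berwald theorem, a connected non-degenerate hypersurface with identically vanishing Fubini--Pick form lies on a hyperquadric; being compact and locally strongly convex, this hyperquadric bounds a convex body and is therefore an ellipsoid, so $M$ is an ellipsoid. The step I expect to be the main obstacle is Step 2 — establishing the Weitzenböck identity in usable form and, above all, taming the quartic term $Q(C)$ through the cubic-form inequality; Steps 1 and 3 are essentially formal, and in the curve case needed for the application in this paper Step 2 is vacuous.
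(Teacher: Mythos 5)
The paper offers no proof of this statement---it is quoted verbatim from Nomizu--Sasaki \cite[Theorem~III.7.5]{nomizu1994affine}---and your outline is essentially the classical Blaschke--Deicke argument presented there: exclusion of the improper case by compactness and the inward-pointing affine normal, constancy and positivity of the affine mean curvature $H$ for a compact proper affine sphere, a Laplacian/Bochner estimate on the Pick invariant (with the Calabi-type algebraic control of the quartic term) forcing the cubic form $C\equiv 0$, and then the Maschke--Pick--Berwald theorem plus compactness to conclude the hypersurface is an ellipsoid. For the only case this paper actually uses ($n=1$, plane curves), your parenthetical direct integration of $\boldsymbol x'''+k\boldsymbol x'=0$ already settles the matter without Step~2.
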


\subsection{Bodies of flotation and illumination}\label{sec:04}

The body of flotation was originally introduced by C. Dupin in \cite{dupin1822applications} as part of his study on the theory of flotation of bodies by means of differential geometry. A new definition was given by I. B\'ar\'any and D.G. Larman \cite{Barany_Larman_1988} and independently by C. Sch\"utt and E.M. Werner \cite{SchuttWerner}, who introduced a \emph{convex body of flotation}. The body of illumination was introduced by E.M. Werner in \cite{Werner_1994}. Although the definitions of the body of flotation and the body of illumination seem to suggest some kind of duality relation, no equality between bodies of flotation of $K,K^*$ and bodies of illumination of $K,K^*$ can be achieved in general (cf. \cite[\S 1.2]{Mordhorst2020}). Nevertheless, in all our theorems, the body of flotation and the body of illumination play symmetrical roles and can be interchanged, still leaving the theorems true.

\begin{figure}
\includegraphics{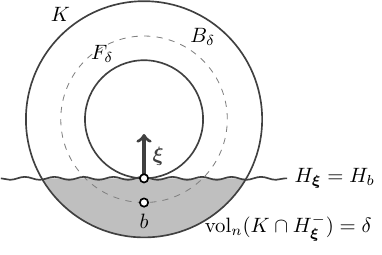}
\caption{Illustrative drawing of a floating body}
\label{fig:04}
\end{figure}

\subsubsection{Bodies of flotation}

Given a convex body $K\subset\mathbb R^n$ and $\delta\in(0,\Vol_n(K))$, we consider the family of hyperplanes $\{H_{\boldsymbol\xi}\}_{\boldsymbol\xi\in\mathbb S^{n-1}}$ such that the half-spaces $H_{\boldsymbol\xi}^-$ determined by $H_{\boldsymbol\xi}$ cut off from $K$ a volume $\delta$. From the point of view of fluid statics, each hyperplane in such a family corresponds to the water surface for a specific orientation in which the body $K$ is floating, with $\delta$ being the volume of the underwater part. The \emph{hypersurface of flotation} of $K$ is defined as the envelope (see \cite[\S 21]{weatherburn1955differential}) of the hyperplanes $H_{\boldsymbol\xi}$, and the \emph{body of flotation} of $K$ is the body whose boundary is the hypersurface of flotation. We will denote the hypersurface of flotation by $\Pi_\delta(K)$, and the body of flotation by $F_\delta(K)$.

\begin{remark}
There are instances in which the hypersurface of flotation does not enclose a convex body. For this reason, an alternative definition was introduced independently in \cite{Barany_Larman_1988} and \cite {SchuttWerner}. The \emph{convex body of flotation} is defined as the intersection
$$K_\delta\colonequals\bigcap_{\boldsymbol\xi\in\mathbb S^{n-1}}H_{\boldsymbol\xi}^+,$$
where $H_{\boldsymbol\xi}^+$ is the half-space complementary to $H_{\boldsymbol\xi}^-$. As an intersection of half-spaces, $K_\delta$ is always a convex body, and it coincides with $F_\delta(K)$ whenever the envelope of the water surfaces encloses a convex body.
\end{remark}

For each water surface $H_{\boldsymbol\xi}$, the centroid of the underwater part $K\cap H_{\boldsymbol\xi}^-$ is known in fluid statics as the \emph{center of buoyancy}. The \emph{hypersurface of buoyancy} is defined as the geometric locus of all the centers of buoyancy and denoted by $\Gamma_\delta(K)$. The hypersurface of buoyancy encloses a strongly convex body $B_\delta(K)$, called the \emph{body of buoyancy}. For convenience, given a center of buoyancy $b\in\Gamma_\delta(K)$, we will refer to the corresponding water surface as $H_b$ instead of $H_{\boldsymbol\xi}$.

\begin{remark}
The function
$$h(\boldsymbol\xi)=\frac{1}{\Vol_n(K)}\int_K|x\cdot\boldsymbol\xi|\;\mathrm dx$$
happens to be a supporting function of a convex body. This body is called the \emph{centroid body} of $K$ and its boundary is called the \emph{centroid hypersurface} of $K$ (cf. \cite[\S 3]{pjm/1103036936}). If $K$ is symmetric, then the centroid body of $K$ coincides with the body of buoyancy of $K$ for density $\delta/\Vol_n(K)=\frac{1}{2}$. Slightly abusing the language, we will use the name \enquote{centroid hypersurface} in the broader context of a hypersurface of buoyancy.
\end{remark}

Dupin was the first to use methods of differential geometry to study stability. His memoir \cite{dupin1822applications} contains three fundamental theorems that govern the flotation of bodies.

\begin{theorem}[First Theorem of Dupin {\cite[XXIV.2.476]{de1924lecons}}]\thlabel{Dup:01}
The body of buoyancy $B_\delta(K)$ is strongly convex, and the plane tangent to $B_\delta(K)$ at a point $b\in\Gamma_\delta(K)$ is parallel to the corresponding water surface $H_b$.
\end{theorem}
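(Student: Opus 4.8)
The plan is to work in the plane (the case $n=2$ is what we actually need, though the argument extends verbatim to general $n$ with a two-dimensional section) and to parametrize the family of water surfaces by their unit normal $\boldsymbol\xi\in\mathbb S^{1}$, writing $\boldsymbol\xi=\boldsymbol\xi(\theta)=(\cos\theta,\sin\theta)$. For each $\theta$ let $H_\theta=H_{\boldsymbol\xi(\theta)}$ be the water surface cutting off volume $\delta$ below it, let $K_\theta^-=K\cap H_\theta^-$ be the underwater region, and let $b(\theta)$ be its centroid, so that $\Gamma_\delta(K)$ is the curve $\theta\mapsto b(\theta)$. First I would record the two scalar quantities attached to each orientation: the signed support value $p(\theta)$ of the water line (so $H_\theta=\{x:\ x\cdot\boldsymbol\xi(\theta)=p(\theta)\}$), and the length $\ell(\theta)$ of the chord $K\cap H_\theta$. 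The defining property $\Vol_2(K_\theta^-)=\delta$ is an identity in $\theta$; differentiating it and using that the boundary contribution from the chord sweeping is governed by the Leibniz/divergence-theorem formula for a moving region, one gets a relation of the form $p'(\theta)$ equals a quantity times $\ell(\theta)$ — the precise bookkeeping here is standard (it is the planar instance of the computation that underlies the classical theory in \cite[Ch.~XXIV]{de1924lecons}), and I would present it cleanly rather than grind it out.

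The heart of the matter is the first-variation formula for the centroid. Writing $b(\theta)=\frac{1}{\delta}\int_{K_\theta^-}x\,\mathrm dx$ and differentiating under the integral sign, the only contribution comes from the part of $\partial K_\theta^-$ that moves, namely the chord $c_\theta=K\cap H_\theta$ together with the piece of $\partial K$ being gained or lost; since the volume $\delta$ is held fixed, the $\partial K$-contribution and the normal-displacement part of the $c_\theta$-contribution are tied together, and after simplification the tangent vector $b'(\theta)$ turns out to be a (positive) scalar multiple of the direction of the chord $c_\theta$, i.e.\ of a vector perpendicular to $\boldsymbol\xi(\theta)$. Concretely I expect to arrive at
\begin{equation}\label{eq:dupin-bprime}
b'(\theta)=\frac{\ell(\theta)^{3}}{12\,\delta}\,\boldsymbol\xi(\theta)^{\perp},
\end{equation}
or an equivalent expression; the key structural output is that $b'(\theta)$ is always orthogonal to $\boldsymbol\xi(\theta)$, hence parallel to $H_\theta$. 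That is exactly the tangency assertion: the tangent line to $\Gamma_\delta(K)$ at $b(\theta)$ is parallel to the water surface $H_\theta=H_{b(\theta)}$.

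To finish, I would extract strong convexity from \eqref{eq:dupin-bprime}. Differentiating again, $b''(\theta)=\frac{1}{12\delta}\big(3\ell^2\ell'\,\boldsymbol\xi^\perp+\ell^3\,(\boldsymbol\xi^\perp)'\big)=\frac{1}{12\delta}\big(3\ell^2\ell'\,\boldsymbol\xi^\perp-\ell^3\,\boldsymbol\xi\big)$, using $(\boldsymbol\xi^\perp)'=-\boldsymbol\xi$; then $\det\!\big(b'(\theta),b''(\theta)\big)=\frac{\ell^6}{144\,\delta^2}\det\!\big(\boldsymbol\xi^\perp,-\boldsymbol\xi\big)=\frac{\ell^6}{144\,\delta^2}>0$ for all $\theta$, since $\ell(\theta)>0$ (the chord of a convex body with interior is a genuine segment). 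A closed planar curve whose oriented curvature \eqref{eq:19} is strictly positive everywhere bounds a strongly convex body, and moreover the map $\theta\mapsto b(\theta)$ realizes $\boldsymbol\xi(\theta)$ as the outer normal at $b(\theta)$, so $\Gamma_\delta(K)=\partial B_\delta(K)$ and $H_b$ is precisely the tangent line to $B_\delta(K)$ at $b$. I would also note that the case where $K\cap H_\theta^-$ is not connected, or the water line is not transverse to $\partial K$, does not occur for $K$ convex with the stated smoothness, so the differentiations are justified.

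The main obstacle is the first step of the centroid variation: one must correctly account for the competition between the boundary term along the chord and the boundary term along $\partial K$ under the volume constraint, and keep track of signs and of the factor coming from the "wedge" swept by the rotating chord. This is a careful but elementary divergence-theorem computation; once \eqref{eq:dupin-bprime} (or its analogue) is in hand, both conclusions of the theorem are immediate.
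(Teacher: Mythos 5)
Your outline is sound, and the formula you ``expect to arrive at'' is in fact the correct one: it is exactly the paper's computation in a different parametrization. The paper does not reprove Dupin's First Theorem (it cites it as classical), but its working planar version is derived in \sref{sec:05}, where the centroid curve is parametrized through the chord endpoints by the boundary parameter $s$ and Green's formula gives $\dot{\boldsymbol r}_2(s)=-\tfrac{1}{6\delta}\det(\boldsymbol c,\gamma'(s))\,\boldsymbol c$ \eqref{eq:10}, i.e.\ tangency parallel to the water line, and then $\kappa_2=12\delta/|\boldsymbol c|^3>0$ \eqref{eq:16}. Your normal-angle parametrization is consistent with this: since the tangent of $\Gamma_\delta(K)$ turns with $\theta$, the speed in your parametrization is $1/\kappa_2=\ell^3/(12\delta)$, which is precisely your displayed $b'(\theta)$. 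So the two conclusions do follow from your formula, and your route (transport theorem for the moving half-plane) is arguably more economical than the paper's Green's-formula bookkeeping; what the paper's $s$-parametrization buys is that the same machinery immediately yields the higher-order quantities (Table \ref{tab:02}) needed later.

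Two repairs are needed to make this a proof rather than a plan. First, the step you defer is the whole content: you must actually carry out the differentiation. It is cleaner than you fear, because there is no competition with a $\partial K$ boundary term at all: $K$ is fixed and only the half-plane $H_\theta^-$ moves, so the normal velocity of $\partial(K\cap H_\theta^-)$ is supported on the chord, where it equals $p'(\theta)-x\cdot\boldsymbol\xi'(\theta)$. The constraint $\Vol_2=\delta$ forces $p'=m\cdot\boldsymbol\xi'$ with $m$ the chord midpoint (this is \thref{thm:19} in disguise), and then writing $x=m+u\,\boldsymbol\xi^\perp$ on the chord gives $\delta\,b'(\theta)=\int_{-\ell/2}^{\ell/2}(m+u\boldsymbol\xi^\perp)(-u)\,\mathrm du=-\tfrac{\ell^3}{12}\boldsymbol\xi^\perp$, your formula up to orientation. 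Second, the theorem is stated for an arbitrary convex body, so $\ell(\theta)$ (and hence your $b''(\theta)$) need not exist everywhere --- $\ell$ can lose differentiability when a chord endpoint crosses a corner or a flat arc of $\partial K$; the curvature formula holds only almost everywhere, exactly as in \eqref{eq:29}, and differentiability of $b$ itself is the content of \thref{thm:09}. Strict/strong convexity is better extracted without $b''$: from $b'(\theta)=\tfrac{\ell^3}{12\delta}\boldsymbol\xi^\perp$ with $0<\ell(\theta)\le\mathrm{diam}\,K$, the curve $b$ is a closed $C^1$ curve whose outer normal at $b(\theta)$ is $\boldsymbol\xi(\theta)$, a bijection onto $\mathbb S^1$; hence the curve is convex with no segments in its boundary (and, where the curvature exists, it is bounded below by $12\delta/(\mathrm{diam}\,K)^3>0$). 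With these two points fixed, your argument is correct and is essentially the normal-angle version of the paper's computation.
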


\noindent In fact, the boundary of $B_\delta(K)$ is always smoother than the boundary of $K$, which was proved much later by H. Huang, B.A. Slomka, and E.M. Werner:

\begin{theorem}[{\cite[Theorem~1.2]{HSW}}]\thlabel{thm:09}
Let $K\subset\mathbb R^n$ be a convex body. Suppose that its boundary $\partial K$ is of class $C^k$ for some $k\geq 0$. Then, for any $\delta\in(0,\Vol_n(K))$, the boundary $\Gamma_\delta(K)$ of its body of buoyancy is of class $C^{k+1}$.
\end{theorem}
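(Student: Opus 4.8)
The plan is to present $\Gamma_\delta(K)$ as the image of an explicit map on $\mathbb S^{n-1}$ and to bootstrap its regularity by the implicit function theorem; the single gain of a derivative over $\partial K\in C^k$ will come from the fact that the centre of buoyancy is produced from $\partial K$ by one integration.

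For $\boldsymbol\xi\in\mathbb S^{n-1}$ let $t(\boldsymbol\xi)$ be the level for which the half-space $H_{\boldsymbol\xi}^-=\{x\colon x\cdot\boldsymbol\xi\le t(\boldsymbol\xi)\}$ cuts off from $K$ the volume $\delta$, and let $b(\boldsymbol\xi)=\frac1\delta\int_{K\cap H_{\boldsymbol\xi}^-}x\,\mathrm dx$ be the corresponding centre of buoyancy. By the First Theorem of Dupin (\thref{Dup:01}) the body $B_\delta(K)$ is strongly convex and carries a continuous field of tangent hyperplanes, so $\partial B_\delta(K)$ is of class $C^1$, the Gauss map of $B_\delta(K)$ is a homeomorphism, and $\boldsymbol\xi$ is the outer unit normal of $\Gamma_\delta(K)$ at $b(\boldsymbol\xi)$; thus $b\colon\mathbb S^{n-1}\to\Gamma_\delta(K)$ is the inverse Gauss map, and it suffices to prove that $b$ is a $C^{k+1}$ immersion. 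Put $V(\boldsymbol\xi,t)=\Vol_n(K\cap\{x\cdot\boldsymbol\xi\le t\})$. Since $0<\delta<\Vol_n(K)$, the hyperplane $\{x\cdot\boldsymbol\xi=t(\boldsymbol\xi)\}$ crosses $\partial K$ properly — for $k\ge 1$ its tangent hyperplane agrees with that of $\partial K$ at no point of the intersection, since such agreement occurs only where $\boldsymbol\xi$ is the outer normal, i.e., at level $\max_{x\in K}x\cdot\boldsymbol\xi>t(\boldsymbol\xi)$ — so in a neighbourhood of every $(\boldsymbol\xi_0,t(\boldsymbol\xi_0))$ the section $K\cap\{x\cdot\boldsymbol\xi=t\}$ is a $C^k$-varying family of convex bodies; concretely this is the implicit function theorem applied on finitely many $C^k$ graphs of $\partial K$ near $\partial K\cap\{x\cdot\boldsymbol\xi=t(\boldsymbol\xi)\}$, and in the plane it merely says that the endpoints of the chord, pulled back along a fixed $C^k$ parametrization of $\partial K$, are $C^k$ functions of $(\boldsymbol\xi,t)$.

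Consequently $\partial_tV$ (the section volume) and $\partial_{\boldsymbol\xi}V$ (the section's first moment, up to a smooth factor) are both of class $C^k$ near $(\boldsymbol\xi_0,t(\boldsymbol\xi_0))$, so $V$, all of whose first partial derivatives are $C^k$, is of class $C^{k+1}$ there. This is where the extra derivative is won, by virtue of $V$ being an antiderivative of the section volume in $t$. Then $V(\boldsymbol\xi,t(\boldsymbol\xi))=\delta$ and $\partial_tV>0$ give $t\in C^{k+1}(\mathbb S^{n-1})$ by the implicit function theorem. Now differentiate $b(\boldsymbol\xi)=\frac1\delta\int_K x\,\mathbf 1[x\cdot\boldsymbol\xi\le t(\boldsymbol\xi)]\,\mathrm dx$ along a tangent vector $\boldsymbol\eta\in T_{\boldsymbol\xi}\mathbb S^{n-1}$ by Hadamard's variational formula: the part of $\partial(K\cap H_{\boldsymbol\xi}^-)$ lying on $\partial K$ is stationary and drops out, while the variation of $t$ keeping the cut-off volume equal to $\delta$ is the projection of the section's centroid onto $\boldsymbol\eta$, and one obtains the classical formula
\[
D_{\boldsymbol\eta}b(\boldsymbol\xi)=\tfrac1\delta\,\Sigma(\boldsymbol\xi)\,\boldsymbol\eta,\qquad\Sigma(\boldsymbol\xi)=\int_{K\cap\{x\cdot\boldsymbol\xi=t(\boldsymbol\xi)\}}(x-\bar x)\otimes(x-\bar x)\,\mathrm d\mathcal H^{n-1},
\]
with $\bar x$ the centroid of the section — the moment-of-inertia matrix of the water-plane (in $\mathbb R^2$ this reads $\frac{\mathrm{d}b}{\mathrm{d}\theta}=\frac{\ell^3}{12\delta}(-\sin\theta,\cos\theta)$, with $\ell$ the length of the chord of flotation). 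Since $\Sigma(\boldsymbol\xi)$ annihilates $\boldsymbol\xi$ and is positive definite on $\boldsymbol\xi^{\perp}=T_{\boldsymbol\xi}\mathbb S^{n-1}$, the map $b$ is an immersion; and since the cutting hyperplane now moves $C^{k+1}$-smoothly while $\partial K$ stays of class $C^k$, the same transversality argument — integrals of polynomials over sections whose boundaries are $C^k$ and move $C^{k+1}$-smoothly — gives $\boldsymbol\xi\mapsto\Sigma(\boldsymbol\xi)$ of class $C^k$. Hence $Db\in C^k$, i.e., $b\in C^{k+1}$, and $\Gamma_\delta(K)=b(\mathbb S^{n-1})$ is a hypersurface of class $C^{k+1}$.

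The conceptual skeleton is short, but the real work lies in the claim that the section is a $C^k$-varying family of convex bodies whose volume and low-order moments depend jointly in a $C^k$ way on $(\boldsymbol\xi,t)$ near a proper cut. Making this uniform means covering $\partial K$ near $\partial K\cap\{x\cdot\boldsymbol\xi=t(\boldsymbol\xi)\}$ by finitely many $C^k$ graphs, running the implicit function theorem with joint dependence on all the variables, and verifying the auxiliary statement that $\int_{D_{\boldsymbol\xi}}g\in C^k$ whenever $g$ is smooth and $\partial D_{\boldsymbol\xi}$ is cut out by a function of class $C^k$ in all the variables, while also justifying the differentiations under the integral sign with $\partial K$ only $C^k$. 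In the plane, where the sections are chords parametrized linearly between their endpoints, all of this collapses to a one-line implicit function theorem argument, so the only real difficulty is in the passage to $n\ge 3$. The degenerate endpoint $k=0$, where $\partial K$ need have no tangent hyperplanes, requires instead the weaker fact that a hyperplane strictly separating points of $K$ on either side cuts $\partial K$ in a continuously varying convex body; the scheme then only delivers continuity of $t$ and of $\Sigma$, yielding exactly $\Gamma_\delta(K)\in C^1$, in agreement with the statement.
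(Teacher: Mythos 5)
This statement is quoted by the paper from \cite[Theorem~1.2]{HSW} and is not proved in the text, so there is no internal argument to compare against; judged on its own, your reconstruction is essentially sound and follows the natural route that also underlies the cited proof and Dupin's theory: parametrize $\Gamma_\delta(K)$ by the normal direction via the support-point map $\boldsymbol\xi\mapsto b(\boldsymbol\xi)$ (Dupin's First Theorem, \thref{Dup:01}, supplies strict convexity, the base case $k=0$, and the injectivity needed to pass from ``$C^{k+1}$ immersion'' to ``$C^{k+1}$ boundary''), gain one derivative for $t(\boldsymbol\xi)$ because $V$ is an antiderivative of section integrals, and identify $Db$ with the second-moment matrix of the water section, which is exactly the planar Third Theorem of Dupin \eqref{eq:29} in disguise. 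Two remarks. First, a cosmetic orientation slip: with $H^-_{\boldsymbol\xi}=\{x\cdot\boldsymbol\xi\le t\}$ the outer normal of $\Gamma_\delta(K)$ at $b(\boldsymbol\xi)$ is $-\boldsymbol\xi$, not $\boldsymbol\xi$ (test on the disc with small $\delta$), and accordingly $D_{\boldsymbol\eta}b=-\tfrac1\delta\Sigma(\boldsymbol\xi)\boldsymbol\eta$; the sign is immaterial for regularity and for the immersion property, but your planar formula should read $\tfrac{\mathrm db}{\mathrm d\theta}=-\tfrac{\ell^3}{12\delta}(-\sin\theta,\cos\theta)$. Second, as you acknowledge, the actual analytic content is the claim that integrals of smooth functions over the sections $K\cap\{x\cdot\boldsymbol\xi=t\}$ depend jointly $C^k$ on $(\boldsymbol\xi,t)$ near a transversal cut (and that the formal variational formulas are legitimate when $\partial K$ is only $C^k$, resp.\ only convex when $k=0$, where the wedge/difference-quotient estimate replaces Hadamard's formula); this is standard--e.g.\ via the radial function of the section over a fixed sphere and the implicit function theorem--but it is where a complete write-up would spend its effort, and your sketch correctly isolates it rather than hiding it.
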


\begin{theorem}[Second Theorem of Dupin {\cite[XXIV.2.477]{de1924lecons}}]\thlabel{Dup:02}
The body of flotation $F_\delta(K)$ touches each water surface $H_{\boldsymbol\xi}$ at the centroid of section $K\cap H_{\boldsymbol\xi}$.
\end{theorem}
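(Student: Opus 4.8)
The plan is to identify the point where a water hyperplane meets the envelope by solving the envelope equations, and then to show, by differentiating the constant-volume constraint, that this point is the section's centroid. Parametrize the water hyperplanes as $H_{\boldsymbol\xi}=\{x:x\cdot\boldsymbol\xi=c(\boldsymbol\xi)\}$, $\boldsymbol\xi\in\mathbb S^{n-1}$, where $c(\boldsymbol\xi)$ is determined implicitly by $V(\boldsymbol\xi,c(\boldsymbol\xi))=\delta$ with $V(\boldsymbol\xi,t)\colonequals\Vol_n\{x\in K:x\cdot\boldsymbol\xi\leq t\}$, the orientation being chosen so that this set is the underwater part (the opposite choice only flips a sign below). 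Every hyperplanar section of $K$ through an interior point is a nondegenerate $(n-1)$-dimensional convex body depending continuously on $\boldsymbol\xi$, so $\partial_t V=m>0$ is continuous, where $m(\boldsymbol\xi,t)$ is the $(n-1)$-measure of $K\cap\{x\cdot\boldsymbol\xi=t\}$; hence $c$ is a $C^1$ function of $\boldsymbol\xi$ by the implicit function theorem (for smoother $\partial K$ one gets more regularity, cf.\ \thref{thm:09}). The characteristic point of $H_{\boldsymbol\xi}$ on the envelope $\Pi_\delta(K)$ is the point $x_\ast(\boldsymbol\xi)$ satisfying the envelope equations $x\cdot\boldsymbol\xi=c(\boldsymbol\xi)$ together with $x\cdot\boldsymbol v=\partial_{\boldsymbol v}c(\boldsymbol\xi)$ for every $\boldsymbol v\in T_{\boldsymbol\xi}\mathbb S^{n-1}$; decomposing $x$ into its $\boldsymbol\xi$-component and its tangential component immediately gives $x_\ast(\boldsymbol\xi)=c(\boldsymbol\xi)\boldsymbol\xi+\nabla_{\mathbb S^{n-1}}c(\boldsymbol\xi)$.

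It remains to compute the spherical gradient of $c$. Writing $V(\boldsymbol\xi,t)=\int_K\mathbf 1[x\cdot\boldsymbol\xi\leq t]\,\mathrm dx$ and differentiating in a direction $\boldsymbol v\in T_{\boldsymbol\xi}\mathbb S^{n-1}$ yields
\[\partial_{\boldsymbol v}V(\boldsymbol\xi,t)=-\int_{K\cap\{x\cdot\boldsymbol\xi=t\}}(x\cdot\boldsymbol v)\,\mathrm d\mathcal H^{n-1}(x);\]
I would justify this by the coarea formula applied to the foliation of $K$ by the level sets of $x\mapsto x\cdot\boldsymbol\xi$, which requires nothing beyond $\partial K$ being Lipschitz and is the only step that is not a one-line computation. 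Differentiating the identity $V(\boldsymbol\xi,c(\boldsymbol\xi))\equiv\delta$ and using $\partial_t V=m$ then gives, for every $\boldsymbol v\in T_{\boldsymbol\xi}\mathbb S^{n-1}$,
\[m(\boldsymbol\xi,c(\boldsymbol\xi))\,\partial_{\boldsymbol v}c(\boldsymbol\xi)=\int_{K\cap H_{\boldsymbol\xi}}(x\cdot\boldsymbol v)\,\mathrm d\mathcal H^{n-1}(x)=m(\boldsymbol\xi,c(\boldsymbol\xi))\,\bigl(\boldsymbol v\cdot\overline x(\boldsymbol\xi)\bigr),\]
where $\overline x(\boldsymbol\xi)\colonequals m(\boldsymbol\xi,c(\boldsymbol\xi))^{-1}\int_{K\cap H_{\boldsymbol\xi}}x\,\mathrm d\mathcal H^{n-1}$ is the centroid of the section. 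Cancelling $m>0$, we obtain $\partial_{\boldsymbol v}c(\boldsymbol\xi)=\boldsymbol v\cdot\overline x(\boldsymbol\xi)$ for all tangent $\boldsymbol v$, i.e., $\nabla_{\mathbb S^{n-1}}c(\boldsymbol\xi)$ is exactly the tangential component of $\overline x(\boldsymbol\xi)$.

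Putting the two pieces together finishes the proof: since $\overline x(\boldsymbol\xi)\in H_{\boldsymbol\xi}$ its $\boldsymbol\xi$-component equals $c(\boldsymbol\xi)$, so $\overline x(\boldsymbol\xi)=c(\boldsymbol\xi)\boldsymbol\xi+\nabla_{\mathbb S^{n-1}}c(\boldsymbol\xi)=x_\ast(\boldsymbol\xi)$, and hence the hypersurface of flotation touches $H_{\boldsymbol\xi}$ precisely at the centroid of $K\cap H_{\boldsymbol\xi}$ — with the contact point lying on $\partial F_\delta(K)$ whenever the envelope bounds a convex body. On the plane one may dispense with the computation altogether: a water chord $H_\theta$ and a nearby one $H_{\theta+\mathrm d\theta}$ cut off equal areas and meet at a point $Q$ about which $H_{\theta+\mathrm d\theta}$ is, to first order, the rotation of $H_\theta$; the two slivers swept on either side of $Q$ have areas $\tfrac12 a^2\,\mathrm d\theta$ and $\tfrac12 b^2\,\mathrm d\theta$, with $a,b$ the distances from $Q$ to the endpoints of the chord, and since one sliver is gained while the other is lost, equality of areas forces $a=b$, so in the limit $Q$ is the midpoint of the chord — which is the centroid of the one-dimensional section. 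The only genuine work, then, is bookkeeping: pinning down orientations and signs, and making the boundary-term-free formula for $\partial_{\boldsymbol v}V$ rigorous; I expect the latter to be the main, though routine, obstacle.
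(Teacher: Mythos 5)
Your argument is correct, and it is a genuinely different (and more complete) route than what the paper does. The paper never proves \thref{Dup:02} in $\mathbb R^n$: it quotes the classical reference and points to the two-way generalization in \thref{thm:19}; the only derivation it carries out itself is the planar one in \sref{sec:05}, where the chord endpoints are parametrized along $\partial K$, the swept area $A(s)$ is differentiated to get \eqref{eq:27}, and $A\equiv\delta$ forces $\det(\boldsymbol c,\dot{\boldsymbol r}_1)=0$, i.e.\ tangency of the midpoint curve to the chord. You instead parametrize the water hyperplanes by their unit normal, obtain $c(\boldsymbol\xi)$ from the constraint $V(\boldsymbol\xi,c(\boldsymbol\xi))=\delta$ via the implicit function theorem, compute $\partial_{\boldsymbol v}V=-\int_{K\cap H_{\boldsymbol\xi}}(x\cdot\boldsymbol v)\,\mathrm d\mathcal H^{n-1}$, and identify the characteristic point of the envelope through the support-function-type formula $x_*=c\,\boldsymbol\xi+\nabla_{\mathbb S^{n-1}}c$; matching the tangential parts gives $x_*=\overline x(\boldsymbol\xi)$. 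The computation checks out, including the sign bookkeeping, and the one step you flag (the boundary-term-free derivative of $V$ in $\boldsymbol\xi$) is indeed routine via the coarea formula for the linear foliation. What your approach buys is the full $n$-dimensional statement as stated, with $c(\boldsymbol\xi)$ playing the role of a (signed) support function of the envelope — a viewpoint compatible with the support-function arguments the paper uses later in \sref{sec:07}; what the paper's planar boundary-point parametrization buys is exactly the setup needed for the higher-order curvature formulas \eqref{eq:15} and \eqref{eq:16}, which your normal-direction parametrization would not produce as directly. Your closing planar sliver argument is essentially the infinitesimal version of the paper's own computation, so that part coincides in spirit; only a cosmetic caveat remains, namely that "touches precisely at" should be read as "has the centroid as a point of tangency," since uniqueness of the contact point is neither claimed in \thref{Dup:02} nor needed.
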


\noindent A more general two-way variant of the Second Theorem of Dupin was proved later by S.P. Olovyanishnikov (cf. \cite[Lemma and subsequent Zamechaniye~1. and Zamechaniye~2.]{Olov}):

\begin{theorem}[{\cite[Theorem~3]{ryab}}]\thlabel{thm:19}
Let $K\subset\mathbb R^n$ be a convex body, and let $\{H_{\boldsymbol\xi}\}_{\boldsymbol\xi\in\mathbb S^{n-1}}$ be a family of hyperplanes. The half-spaces $H_{\boldsymbol\xi}^-$ determined by $H_{\boldsymbol\xi}$ cut off from $K$ the same volume if and only if the envelope touches each hyperplane $H_{\boldsymbol\xi}$ at the centroid of section $K\cap H_{\boldsymbol\xi}$.
\end{theorem}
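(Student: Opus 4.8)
The plan is to run the standard envelope argument and make the whole statement rest on one differentiation identity: the directional derivative of the cut-off volume equals, up to a positive scalar factor, the amount by which the centroid of the corresponding section fails to satisfy the linear equation that characterizes the point of tangency of the envelope. Once this identity is established, the equivalence becomes pure linear algebra, because both the envelope point and the section centroid are then forced to solve the same, uniquely solvable, system.

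\emph{Local setup and the envelope point.} Fixing $\boldsymbol\xi_0\in\mathbb S^{n-1}$, I would pass to local coordinates $u=(u_1,\dots,u_{n-1})$ on $\mathbb S^{n-1}$ near $\boldsymbol\xi_0$ and write $H_{\boldsymbol\xi}=\{x:x\cdot\boldsymbol\xi=h(\boldsymbol\xi)\}$ with $H_{\boldsymbol\xi}^-=\{x:x\cdot\boldsymbol\xi\le h(\boldsymbol\xi)\}$, where $h$ is a scalar function of $u$ (of class $C^1$; in the applications this regularity is inherited from $\partial K\in C^2$ via the implicit function theorem). A curve $\boldsymbol\xi\mapsto q(\boldsymbol\xi)$ with $q(\boldsymbol\xi)\in H_{\boldsymbol\xi}$ is tangent to $H_{\boldsymbol\xi}$ exactly when $\partial_{u_i}q\cdot\boldsymbol\xi=0$ for every $i$; differentiating $q\cdot\boldsymbol\xi=h$ turns this into the linear system
$$q\cdot\boldsymbol\xi=h,\qquad q\cdot\partial_{u_i}\boldsymbol\xi=\partial_{u_i}h\quad(i=1,\dots,n-1).$$
Since $\{\boldsymbol\xi,\partial_{u_1}\boldsymbol\xi,\dots,\partial_{u_{n-1}}\boldsymbol\xi\}$ is a basis of $\mathbb R^n$ at every point of $\mathbb S^{n-1}$, this system has a unique solution, and by definition that solution is the point $p(\boldsymbol\xi)$ at which the envelope touches $H_{\boldsymbol\xi}$. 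In particular $p(\boldsymbol\xi)$ is meaningful pointwise, irrespective of whether the global envelope bounds a convex body, so the statement makes sense in full generality.

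\emph{First variation of the cut-off volume, and conclusion.} Next I would compute the derivative of $V(\boldsymbol\xi)\colonequals\Vol_n(K\cap H_{\boldsymbol\xi}^-)$. Writing $H_{\boldsymbol\xi}^-=\{x:\phi(x,u)\le 0\}$ with $\phi(x,u)=x\cdot\boldsymbol\xi(u)-h(u)$ and $|\nabla_x\phi|=1$, the classical formula for the rate of change of the volume of a region with a moving boundary gives
$$\partial_{u_i}V=-\int_{K\cap H_{\boldsymbol\xi}}\bigl(x\cdot\partial_{u_i}\boldsymbol\xi-\partial_{u_i}h\bigr)\,\mathrm d\mathcal H^{n-1}(x)=-\,\Vol_{n-1}(K\cap H_{\boldsymbol\xi})\bigl(g(\boldsymbol\xi)\cdot\partial_{u_i}\boldsymbol\xi-\partial_{u_i}h\bigr),$$
where $g(\boldsymbol\xi)$ is the centroid of the section $K\cap H_{\boldsymbol\xi}$; the factor $\Vol_{n-1}(K\cap H_{\boldsymbol\xi})$ is positive because for $0<\delta<\Vol_n(K)$ the section is genuinely $(n-1)$-dimensional. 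Hence $\partial_{u_i}V=0$ for all $i$ if and only if $g(\boldsymbol\xi)\cdot\partial_{u_i}\boldsymbol\xi=\partial_{u_i}h$ for all $i$. Since the centroid of a cross-section lies in the hyperplane carrying that section, $g(\boldsymbol\xi)\cdot\boldsymbol\xi=h(\boldsymbol\xi)$ always holds; therefore $g(\boldsymbol\xi)$ solves the full tangency system precisely when it solves the last $n-1$ equations, which by uniqueness happens precisely when $g(\boldsymbol\xi)=p(\boldsymbol\xi)$. Chaining these equivalences and using that $\mathbb S^{n-1}$ is connected (so that "$\partial_{u_i}V\equiv 0$ on every chart" is the same as "$V$ constant"), I conclude that $V$ is constant if and only if $g\equiv p$, i.e., if and only if the envelope touches each $H_{\boldsymbol\xi}$ at the centroid of $K\cap H_{\boldsymbol\xi}$, which is the assertion.

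\emph{Main obstacle.} The only delicate point is the rigorous justification of the first-variation formula at the available regularity, i.e., that $\boldsymbol\xi\mapsto V(\boldsymbol\xi)$ is $C^1$ with the stated boundary integral as its derivative. For a convex body this follows from differentiating under the integral sign together with the coarea formula, once one verifies that the sections $K\cap H_{\boldsymbol\xi}$ vary continuously in the Hausdorff sense and that there is no lateral contribution from $\partial K$ — the part of $\partial(K\cap H_{\boldsymbol\xi}^-)$ lying on $\partial K$ is stationary and contributes nothing, only the flat face $K\cap H_{\boldsymbol\xi}$ moves, and their common $(n-2)$-dimensional edge is negligible. Everything else is the linear algebra of the tangency system plus the connectedness argument; in particular the proof never needs the envelope to enclose a body, only the pointwise tangency point $p(\boldsymbol\xi)$ supplied by the first step.
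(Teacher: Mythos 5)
The paper does not prove this statement at all: it is quoted from \cite{ryab} (Theorem 3, going back to Olovyanishnikov), so there is no internal proof to compare against; the closest material in the paper is the planar computation \eqref{eq:27}--\eqref{eq:33}, where the same fact appears in dimension $2$ via a boundary parametrization (setting $A\equiv\delta$ forces $\det(\boldsymbol c,\dot{\boldsymbol r}_1)=0$, i.e.\ the envelope of the chords is the curve of chord midpoints, the midpoints being the centroids of the $1$-dimensional sections). Your argument is a correct $n$-dimensional proof along the standard lines: the characteristic (tangency) point of the family $H_{\boldsymbol\xi}=\{x\cdot\boldsymbol\xi=h\}$ is the unique solution of $q\cdot\boldsymbol\xi=h$, $q\cdot\partial_{u_i}\boldsymbol\xi=\partial_{u_i}h$, and the first variation of the cut-off volume is $\partial_{u_i}V=-\Vol_{n-1}(K\cap H_{\boldsymbol\xi})\bigl(g\cdot\partial_{u_i}\boldsymbol\xi-\partial_{u_i}h\bigr)$, so $\nabla V\equiv 0$ on the connected sphere is equivalent to the centroid $g$ solving the tangency system, i.e.\ to $g\equiv p$. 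This buys more than the paper's planar computation, since it works directly with the support-function parametrization in any dimension and makes the pointwise meaning of ``the envelope touches $H_{\boldsymbol\xi}$'' precise without assuming the envelope bounds a body. Two small presentational points, neither a gap: (i) your appeal to $\partial K\in C^2$ is unnecessary -- for any convex body the function $V(\boldsymbol\xi,t)=\Vol_n(K\cap\{x\cdot\boldsymbol\xi\le t\})$ is $C^1$ wherever the hyperplane meets the interior (its partials are the section volume and minus the section moment, both continuous), so the implicit function theorem gives $h\in C^1$ in the ``constant volume'' direction with no smoothness of $\partial K$; (ii) in the converse direction the $C^1$ dependence of the family, and the requirement that each $H_{\boldsymbol\xi}$ meet the interior of $K$ (so that the centroid exists and $\Vol_{n-1}(K\cap H_{\boldsymbol\xi})>0$), should be stated explicitly as part of the hypothesis that ``the envelope touches each hyperplane at the centroid,'' exactly as you implicitly use them.
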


\begin{theorem}[Third Theorem of Dupin {\cite[XXIV.2.478]{de1924lecons}}]\thlabel{Dup:03}
The shape operator $\boldsymbol S$ corresponding to a point $b$ of the hypersurface of buoyancy is equal to $\delta\boldsymbol I^{-1}$, where $\delta$ is the volume of the underwater part of $K$ and $\boldsymbol I$ is the inertia tensor of $K\cap H_b$, whose entries are $\boldsymbol I_{ij}\colonequals\int_{K\cap H_b}x_ix_j\;\mathrm dx$.
\end{theorem}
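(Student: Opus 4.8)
The plan is to obtain $\boldsymbol S$ by differentiating the center of buoyancy $b(\boldsymbol\xi)$ and the Gauss map of $\Gamma_\delta(K)$ with respect to the orientation $\boldsymbol\xi$ and then combining the two differentials via the chain rule; by the tangency statement of \thref{Dup:01} the water surface and the tangent hyperplane to $B_\delta(K)$ at the corresponding center of buoyancy are parallel, which is what makes this work.

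Fix $\boldsymbol\xi_0\in\mathbb S^{n-1}$ and put $b_0:=b(\boldsymbol\xi_0)$. After an isometry, assume $\boldsymbol\xi_0=\boldsymbol e_n$ and the water surface $H_{\boldsymbol\xi_0}=\{x_n=0\}$; by \thref{Dup:02} the body of flotation $F_\delta(K)$ touches $H_{\boldsymbol\xi_0}$ at the centroid of the section $S_0:=K\cap\{x_n=0\}$, so after a translation inside $\{x_n=0\}$ we may also assume this centroid is the origin. Then the submerged cap $K^-:=K\cap\{x_n\le 0\}$ has $\Vol_n(K^-)=\delta$ and $b_0=\frac1\delta\int_{K^-}x\,\mathrm dx$. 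Write $x=(x',x_n)$ with $x'\in\mathbb R^{n-1}$, let $D\subset\mathbb R^{n-1}$ be the projection of $S_0$ (so that $\boldsymbol I_{ij}=\int_D x'_ix'_j\,\mathrm dx'$ for $i,j\le n-1$, while the entries of $\boldsymbol I$ involving the index $n$ vanish because $x_n=0$ on $S_0$), and parametrize the nearby water surfaces by $\boldsymbol p\in\mathbb R^{n-1}$ near $\boldsymbol 0$: let $H_{\boldsymbol p}^-:=\{x_n\le h(\boldsymbol p)+\boldsymbol p\cdot x'\}$, where $h(\boldsymbol p)$ is the unique level --- monotone in the cut-off volume --- with $\Vol_n(K\cap H_{\boldsymbol p}^-)=\delta$, so $h(\boldsymbol 0)=0$. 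The outward unit normal of $H_{\boldsymbol p}^-$ is $\boldsymbol\xi(\boldsymbol p)=(1+|\boldsymbol p|^2)^{-1/2}(-\boldsymbol p,1)$; being a graph over $\boldsymbol e_n^\perp$, it is a local chart of $\mathbb S^{n-1}$ at $\boldsymbol\xi_0$, and $\partial_{p_i}\boldsymbol\xi(\boldsymbol 0)=-\boldsymbol e_i$. Accordingly $\boldsymbol p\mapsto b(\boldsymbol\xi(\boldsymbol p))=\frac1\delta\int_{K\cap H_{\boldsymbol p}^-}x\,\mathrm dx$ is a smooth map onto a neighborhood of $b_0$ in $\Gamma_\delta(K)$, which we abbreviate by $b(\boldsymbol p)$.

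The computational core is the first-variation identity
\begin{equation*}
\partial_{p_i}\Big|_{\boldsymbol p=\boldsymbol 0}\int_{K\cap H_{\boldsymbol p}^-}f(x)\,\mathrm dx=\int_D f(x',0)\,\bigl(\partial_{p_i}h(\boldsymbol 0)+x'_i\bigr)\,\mathrm dx',
\end{equation*}
valid for continuous $f$: slicing $K$ into its vertical fibers (convexity), only the fibers over the relative interior of $D$ are affected by a small perturbation of the cut, and the region swept there is, to first order, the slab of signed height $\partial_{p_i}h(\boldsymbol 0)+x'_i$. Taking $f\equiv 1$, the constancy of the cut-off volume together with $\int_D x'_i\,\mathrm dx'=0$ (the centroid of $S_0$ is the origin) forces $\partial_{p_i}h(\boldsymbol 0)=0$ for every $i$. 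Taking $f(x)=x_j$ then gives $\delta\,\partial_{p_i}b_j(\boldsymbol 0)=\int_D x'_ix_j\,\mathrm dx'$; for $j=n$ this vanishes since $x_n=0$ on $D$ --- so $T_{b_0}\Gamma_\delta(K)=\boldsymbol e_n^\perp$, which recovers the tangency statement of \thref{Dup:01} --- while for $j\le n-1$ it equals $\boldsymbol I_{ij}$. Identifying $T_{\boldsymbol 0}\mathbb R^{n-1}$ with $\boldsymbol e_n^\perp$ via $\partial_{p_i}\leftrightarrow\boldsymbol e_i$, this says exactly that $Db|_{\boldsymbol 0}=\tfrac1\delta\boldsymbol I$ as an endomorphism of $\boldsymbol e_n^\perp$; in particular it is positive definite, hence invertible.

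Finally, orient the unit normal of the convex hypersurface $\Gamma_\delta(K)$ so that $\boldsymbol n(b(\boldsymbol\xi))=\boldsymbol\xi$ --- the sign consistent with the model case of a Euclidean ball, where $B_\delta(K)$ is a concentric ball and $b(\boldsymbol\xi)$ is a negative multiple of $\boldsymbol\xi$. Then the Gauss map satisfies $\boldsymbol n\circ b=\boldsymbol\xi$ on the $\boldsymbol p$-domain, so by the chain rule and the preceding paragraph
\begin{equation*}
D\boldsymbol n|_{b_0}=D\boldsymbol\xi|_{\boldsymbol 0}\circ(Db|_{\boldsymbol 0})^{-1}=(-\mathrm{Id})\circ\bigl(\tfrac1\delta\boldsymbol I\bigr)^{-1}=-\delta\boldsymbol I^{-1},
\end{equation*}
whence $\boldsymbol S=-D\boldsymbol n|_{b_0}=\delta\boldsymbol I^{-1}$ (here $\boldsymbol I^{-1}$ denotes the inverse of the restriction of $\boldsymbol I$ to $\boldsymbol e_n^\perp$, i.e.\ to the direction of $H_b$), and positivity of this operator re-derives the strong convexity asserted in \thref{Dup:01}. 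The one genuinely delicate point is the rigorous proof of the first-variation identity for a non-smooth boundary --- controlling the swept region near $\partial S_0$ and justifying differentiation under the integral sign; by \thref{thm:09} we may assume $\partial K$ is of class $C^1$, so $\Gamma_\delta(K)$ is of class $C^2$ and $\boldsymbol S$ is classically defined, after which the identity is a routine transport-type computation. Everything else is bookkeeping and linear algebra.
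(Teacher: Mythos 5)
Your proof is essentially correct, but note there is nothing in the paper to compare it against: \thref{Dup:03} is quoted as a classical result with a citation to \cite[XXIV.2.478]{de1924lecons}, and the only related computation the paper carries out itself is the planar case, where the curvature formula \eqref{eq:16} for $\Gamma_\delta(K)$ (equivalently \eqref{eq:29}) is obtained by the explicit parametrization $\boldsymbol r_2$ and Green's formula. Your argument is the standard metacentric computation in arbitrary dimension: tilt the water plane about the centroid of the waterplane section, use the constancy of the cut-off volume to kill the first-order vertical shift ($\partial_{p_i}h(\boldsymbol 0)=0$, which is the infinitesimal form of \thref{Dup:02}), read off $Db|_{\boldsymbol 0}=\tfrac1\delta\boldsymbol I$ from the first-variation identity, and invert via the Gauss map. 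The linear algebra and the chain rule step are correct, and your derivation has the added value of recovering \thref{Dup:01} (tangency and strong convexity) along the way, whereas the paper's planar computation buys explicit formulas for $\dot{\boldsymbol r}_2$ and $\kappa_2$ that are then used in the proofs of the main theorems. Two points deserve to be made explicit rather than left implicit. First, the statement's $\boldsymbol I_{ij}=\int_{K\cap H_b}x_ix_j\,\mathrm dx$ is only correct with coordinates in $H_b$ centered at the centroid of the section, and with $\boldsymbol I^{-1}$ meaning the inverse of the tangential block; your normalization (origin at the centroid, $D\subset\boldsymbol e_n^\perp$) supplies exactly this reading, which is also the one consistent with \eqref{eq:29}, so state it as part of the theorem's interpretation rather than as a by-product of your coordinates. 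Second, the appeal to \thref{thm:09} is phrased backwards: that theorem does not allow you to ``assume $\partial K$ is $C^1$''; rather, your argument proves the identity wherever $\Gamma_\delta(K)$ is twice differentiable, which is guaranteed everywhere when $\partial K$ is $C^1$ (by \thref{thm:09}) and almost everywhere in general (by Alexandrov-type regularity), matching the ``almost every point'' formulation of the planar corollary. Neither point is a genuine gap; the first-variation identity itself, which you flag as the delicate step, does hold for arbitrary convex bodies since the hyperplane cuts the interior, so the only real restriction is the $C^2$-smoothness of $\Gamma_\delta(K)$ needed for $\boldsymbol S$ to be classically defined.
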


\noindent Recall that the Gaussian curvature is given by the determinant of $\boldsymbol S$ and thus is equal to
$$\frac{\delta^{n-1}}{L_{K\cap H_b}^{2n-2}\Vol_{n-1}(K\cap H_b)^{n+1}},$$
where $L_{K\cap H_b}$ is the isotropic constant of $K\cap H_b$. On a plane, the Third Theorem of Dupin can be reformulated in a much simpler form (cf. \cite[p.~23]{Zhuk} and \cite[\S 2]{Auerbach1938}), which will be enough for our purposes:

\begin{corollary}[Third Theorem of Dupin on a plane]
The curvature of $\Gamma_\delta(K)$ at almost every point $b\in\Gamma_\delta(K)$ is equal to
\begin{equation}\label{eq:29}\kappa(b)=\frac{12\delta}{\Vol_1(K\cap H_b)^3}.\end{equation}
\end{corollary}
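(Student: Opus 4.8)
The plan is to derive \eqref{eq:29} by specializing the Third Theorem of Dupin (\thref{Dup:03}) to $n=2$, and then to cross-check it by a short direct computation of the kind carried out in \sref{sec:05}. Fix a point $b\in\Gamma_\delta(K)$ at which $\Gamma_\delta(K)$ is twice differentiable; this is the case at almost every $b$, since $\Gamma_\delta(K)$, being a convex curve, is twice differentiable almost everywhere (it is even of class $C^1$ by \thref{thm:09}). Choose orthonormal coordinates $(x_1,x_2)$ with origin at the midpoint of the chord $K\cap H_b$ (which is its centroid) and $x_1$-axis along $H_b$. Then $K\cap H_b=\{(x_1,0):|x_1|\leq\tfrac12\ell\}$ with $\ell\colonequals\Vol_1(K\cap H_b)$, so the inertia matrix of \thref{Dup:03} has a single non-zero entry $\boldsymbol I_{11}=\int_{-\ell/2}^{\ell/2}x_1^2\;\mathrm dx_1=\tfrac1{12}\ell^3$.

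Next, by \thref{Dup:01} the tangent line $T_b\Gamma_\delta(K)$ is parallel to $H_b$, that is, it is the $x_1$-direction; hence the restriction of $\boldsymbol I$ to $T_b\Gamma_\delta(K)$ is the scalar $\tfrac1{12}\ell^3$, and \thref{Dup:03} says the shape operator of $\Gamma_\delta(K)$ acts on $T_b\Gamma_\delta(K)$ as multiplication by $\delta\cdot\bigl(\tfrac1{12}\ell^3\bigr)^{-1}=12\delta/\ell^3$. On the other hand, for a plane curve the shape operator is multiplication by the curvature: differentiating the identities $\boldsymbol n\cdot\boldsymbol n=1$ and $\boldsymbol n\cdot\boldsymbol t=0$ and using \eqref{eq:28} gives $D_{\boldsymbol t}\boldsymbol n=-\kappa\boldsymbol t$, so that $\boldsymbol S(\boldsymbol t)=-D_{\boldsymbol t}\boldsymbol n=\kappa\boldsymbol t$. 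Comparing the two expressions gives $\kappa(b)=12\delta/\Vol_1(K\cap H_b)^3$, which is \eqref{eq:29}.

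For the direct cross-check, parametrize the water lines by the angle $\phi$ of the direction vector $\boldsymbol u(\phi)=(\cos\phi,\sin\phi)$ of $H_\phi$, so that the center of buoyancy is $b(\phi)=\tfrac1\delta\int_{K\cap H_\phi^-}x\;\mathrm dx$. Differentiating under the integral sign — the only moving part of $\partial(K\cap H_\phi^-)$ is the chord $K\cap H_\phi$, and the constraint $\Vol_2(K\cap H_\phi^-)=\delta$ pins down the translation rate of $H_\phi$ (infinitesimally, $H_\phi$ turns about the midpoint of its chord) — one finds $\dot b(\phi)=\rho(\phi)\,\boldsymbol u(\phi)$, where $\rho(\phi)$ equals $\tfrac1\delta$ times the second moment $\tfrac1{12}\Vol_1(K\cap H_\phi)^3$ of the chord about its midpoint; in particular the normal component of $\dot b$ vanishes, re-proving \thref{Dup:01}. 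Since $\dot{\boldsymbol u}(\phi)=(-\sin\phi,\cos\phi)$, so that $\det\bigl(\boldsymbol u(\phi),\dot{\boldsymbol u}(\phi)\bigr)=1$, differentiating once more gives $\ddot b=\dot\rho\,\boldsymbol u+\rho\,\dot{\boldsymbol u}$, whence $\det(\dot b,\ddot b)=\rho^2$ and $|\dot b|^3=\rho^3$; substituting into \eqref{eq:19} recovers $\kappa=1/\rho=12\delta/\Vol_1(K\cap H_\phi)^3$.

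The main obstacle, in either route, is the dimensional bookkeeping rather than any substantive idea. In the first route one must observe that the $2\times2$ matrix $\boldsymbol I$ of \thref{Dup:03} is singular, so the inverse written there is to be read as the reciprocal of the restriction of $\boldsymbol I$ to the one-dimensional tangent line $T_b\Gamma_\delta(K)=H_b$ handed to us by \thref{Dup:01}. In the second route one must justify the transport-theorem step, which is harmless when $\partial K$ is smooth (for instance of class $C^2$) and which, for a general convex body, is what the \enquote{almost every} clause together with a routine approximation takes care of. The remaining ingredients — the elementary $\int_{-\ell/2}^{\ell/2}x^2\;\mathrm dx=\ell^3/12$ and the Frenet relations — require no work.
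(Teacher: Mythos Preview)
Your proposal is correct. The first route---specializing \thref{Dup:03} to $n=2$ by computing the second moment $\ell^3/12$ of a segment and identifying the shape operator of a plane curve with its curvature---is exactly how the paper presents the corollary: it is stated immediately after \thref{Dup:03} as a reformulation, with no proof beyond references to \cite{Zhuk} and \cite{Auerbach1938}. Your remark that the $2\times 2$ matrix $\boldsymbol I$ is singular and must be read as its restriction to the tangent line is a point the paper leaves implicit.

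Your second route differs from the paper's own independent derivation. In \sref{sec:05} the paper parametrizes $\partial K$ by a parameter $s$, defines $\boldsymbol r_2(s)$ as the centroid of the cut-off region, and computes $\dot{\boldsymbol r}_2$ via Green's formula to obtain \eqref{eq:10}, then substitutes into \eqref{eq:19} to get \eqref{eq:16}. You instead parametrize the centers of buoyancy by the direction angle $\phi$ of the water line and use the transport theorem to get $\dot b=\rho\,\boldsymbol u$ directly; this is the more classical route (essentially Dupin's own) and is shorter for this particular formula, though the paper's boundary parametrization is what it needs for the rest of its computations in \sref{sec:05}.
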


\subsubsection{Bodies of illumination}

Given a convex body $K\subset\mathbb R^n$ and $\delta\in(0,\Vol_n(K))$, the \emph{hypersurface of illumination} of $K$ is the set of points $x\in\mathbb R^n$ such that the silhouette cone $\Conv(K\cup\{x\})\setminus K$ has volume $\delta$, and the \emph{body of illumination} of $K$ is the body whose boundary is the hypersurface of illumination. We will denote the hypersurface of illumination by $\Pi^\delta(K)$, and the body of illumination by $I^\delta(K)$. The \emph{centroid hypersurface} is defined as the geometric locus of all the centroids of silhouette cones with apexes on the hypersurface of illumination. We will denote the centroid hypersurface by $\Gamma^\delta(K)$.\\

The hypersurface of illumination always encloses a convex body \cite[Proposition~2(i)]{Werner_2006}. The same argument shows that the centroid hypersurface also encloses a convex body. However, unlike in the case of bodies of flotation, it does not need to be strictly convex.

%%%%%%%%%%%%%%%%%%%%%%%%%%%%%%%%%%%%%%%%%%%%%%%%%%%%%%%%%%%%%%%%%%%%%%
\section{Auxiliary results}\label{sec:05}
%%%%%%%%%%%%%%%%%%%%%%%%%%%%%%%%%%%%%%%%%%%%%%%%%%%%%%%%%%%%%%%%%%%%%%

Along the lines of A.Yu. Davidov \cite[p.~21]{Zhuk}, we begin by introducing the tools from differential geometry that we are going to use later in our proofs. Our approach, however, will be purely analytic, unlike the others, more geometric in nature. Although the first subsection (on the body of flotation) merely reproduces results that are already well known (see \cite{ARSY} and references therein), we decided to include it for the sake of completeness and clarity of presentation. Admittedly, we do not know any reference for the results contained in the second subsection (on the body of illumination). It seems that they are new and original.\\

For the reader's convenience, we will summarize the results in the form of Tables \ref{tab:02} and \ref{tab:04}. We will use the same notation as in the corresponding subsections of the paper. Check marks in the first row indicate the validity of the First Theorem of Dupin.

\begin{remark}
In what follows, we will stop at calculating second-order characteristics of the curves. However, unlike geometric considerations, our method allows almost algorithmic computation of characteristics of any order, if necessary.
\end{remark}

\subsection{The body of flotation}

Let $K\subset\mathbb R^2$ be a convex body with boundary of class $C^2$, let $F_\delta$ be the body of flotation for $K$ and some $\delta\in(0,\Vol_2(K))$, and let $\Gamma_\delta(K)$ be the corresponding centroid curve. Let $\gamma:I\to\mathbb R^2$ be a regular parametrization of $\partial K$, and let $t=t(s)$ be any function of class $C^1$. The area $A=A(s)$ swept out by a point travelling from $\gamma(s)$ to $\gamma(t)$ along the curve $\gamma$ is equal to
$$A=\frac{1}{2}\int_s^t\det(\gamma(u)-\gamma(s),\gamma'(u))\;\mathrm du,$$
whence
\begin{align*}
\frac{\mathrm dA}{\mathrm ds}&=\frac{1}{2}\left(\det(\gamma(t)-\gamma(s),\gamma'(t))\frac{\mathrm dt}{\mathrm ds}-\int_s^t\det(\gamma'(s),\gamma'(u))\;\mathrm du\right)\\
&=\frac{1}{2}\left(\det(\gamma(t)-\gamma(s),\gamma'(t))\frac{\mathrm dt}{\mathrm ds}-\det(\gamma'(s),\gamma(t)-\gamma(s))\right)\\
&=\frac{1}{2}\left(\det(\gamma(t)-\gamma(s),\gamma'(t))\frac{\mathrm dt}{\mathrm ds}+\det(\gamma(t)-\gamma(s),\gamma'(s))\right),
\end{align*}
and finally
\begin{equation}\label{eq:27}\frac{\mathrm dA}{\mathrm ds}=\det\left(\gamma(t)-\gamma(s),\frac{\mathrm d}{\mathrm ds}\left[\frac{\gamma(t)+\gamma(s)}{2}\right]\right).\end{equation}

\begin{figure}
\includegraphics{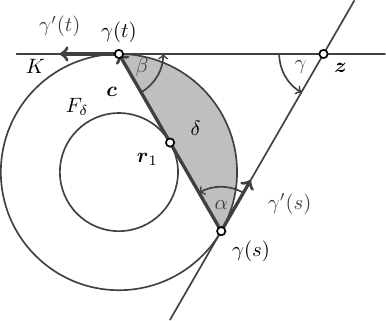}
\caption{Notations used in the argument for the body of flotation}
\label{fig:01}
\end{figure}

\subsubsection{The boundary}

From now on, let $t$ be chosen so that the area $A\equiv\delta$ is constant. Denote $\boldsymbol x(s)\colonequals\gamma(s)$, $\boldsymbol y(s)\colonequals\gamma(t(s))$, $\boldsymbol c\colonequals\boldsymbol y-\boldsymbol x$, $\boldsymbol r_1\colonequals\frac{1}{2}(\boldsymbol x+\boldsymbol y)$. Then \eqref{eq:27} reads
\begin{equation}\label{eq:21}\det(\boldsymbol c,\dot{\boldsymbol r}_1)=0.\end{equation}
It follows directly from the definition that $\boldsymbol r_1$ is a (not necessarily regular) parametrization of $\Pi_\delta$. Indeed, for every $s\in I$, the curve $\boldsymbol r_1$ is tangent to the chord $\boldsymbol c$ that cuts off from $K$ a volume $\delta$ at its midpoint (cf. \thref{thm:19}). From \eqref{eq:21} we may calculate
\begin{equation}\label{eq:14}\frac{\mathrm dt}{\mathrm ds}=-\frac{\det(\boldsymbol c,\gamma'(s))}{\det(\boldsymbol c,\gamma'(t))}.\end{equation}
Note that the denominator does not vanish, for a chord of a convex curve can not be tangent to the curve itself. Combining the above results we get
\begin{equation}\label{eq:33}\dot{\boldsymbol r}_1(s)=\boldsymbol c\cdot\frac{\det(\gamma'(s),\gamma'(t))}{2\det(\boldsymbol c,\gamma'(t))}.\end{equation}
Thus, the parametrization is regular, possibly except for vertex singularities. Now, suppose that $s$ is a regular point of $\boldsymbol r_1$. By \eqref{eq:19}, the oriented curvature of $\Pi_\delta$ at $\boldsymbol r_1$ equals
\begin{equation}\label{eq:20}\kappa_1=\frac{\det(\dot{\boldsymbol r}_1,\ddot{\boldsymbol r}_1)}{|\dot{\boldsymbol r}_1|^3}.\end{equation}
Substituting \eqref{eq:14} to \eqref{eq:20}, after simplification, yields
$$\kappa_1(s)=-\frac{4\det(\boldsymbol c,\gamma'(s))\det(\boldsymbol c,\gamma'(t))}{|\boldsymbol c|^3\det(\gamma'(s),\gamma'(t))}=-\frac{4\cdot(-|\boldsymbol c||\gamma'(s)|\sin\alpha)\cdot(|\boldsymbol c||\gamma'(t)|\sin\beta)}{|\boldsymbol c|^3\cdot(|\gamma'(s)||\gamma'(t)|\sin\gamma)}=\frac{4}{|\boldsymbol c|(\cot\alpha+\cot\beta)}$$
(cf. \fref{fig:01}). Equivalently, the above result can be formulated as
\begin{equation}\label{eq:15}\kappa_1(s)=\frac{\|\boldsymbol c\|^3}{|\boldsymbol c|^3}\end{equation}
(cf. \cite[Lemma~4]{Schutt1994}).

\subsubsection{The centroid curve}

A natural (\emph{a priori} not necessarily regular) parametrization $\boldsymbol r_2$ of $\Gamma_\delta(K)$ is provided by the very definition. Namely,
$$\boldsymbol r_2(s)\colonequals\gamma(s)+\frac{1}{3\delta}\int_s^t(\gamma(u)-\gamma(s))\det(\gamma(u)-\gamma(s),\gamma'(u))\;\mathrm du.$$
Differentiation of the above formula yields
$$\dot{\boldsymbol r}_2(s)=\begin{aligned}[t]&\gamma'(s)+\frac{1}{3\delta}\bigg((\gamma(t)-\gamma(s))\det(\gamma(t)-\gamma(s),\gamma'(t))\frac{\mathrm dt}{\mathrm ds}\\
&\quad-\int_s^t\gamma'(s)\det(\gamma(u)-\gamma(s),\gamma'(u))+(\gamma(u)-\gamma(s))\det(\gamma'(s),\gamma'(u))\;\mathrm du\bigg).\end{aligned}$$
Now, let $C$ be a positively oriented, piecewise smooth, simple closed curve bounding the region $D$ of volume $\delta$ cut off from $K$ by the chord $\boldsymbol c$. Denote $\boldsymbol v(u)\colonequals\gamma(u)-\gamma(s)$, and observe that the latter integral equals
$$\left(\oint_C-\int_{[\boldsymbol y,\boldsymbol x]}\right)\gamma'(s)\det(\boldsymbol v,\mathrm d\boldsymbol v)+\boldsymbol v\det(\gamma'(s),\mathrm d\boldsymbol v).$$
Using Green's formula, we obtain
\begin{align*}
&\oint_C\gamma'(s)\det(\boldsymbol v,\mathrm d\boldsymbol v)+\boldsymbol v\det(\gamma'(s),\mathrm d\boldsymbol v)=\oint_C\begin{pmatrix}\gamma_1'(s)(\boldsymbol v_1\mathrm d\boldsymbol v_2-\boldsymbol v_2\mathrm d\boldsymbol v_1)+\boldsymbol v_1(\gamma_1'(s)\mathrm d\boldsymbol v_2-\gamma_2'(s)\mathrm d\boldsymbol v_1)\\\gamma_2'(s)(\boldsymbol v_1\mathrm d\boldsymbol v_2-\boldsymbol v_2\mathrm d\boldsymbol v_1)+\boldsymbol v_2(\gamma_1'(s)\mathrm d\boldsymbol v_2-\gamma_2'(s)\mathrm d\boldsymbol v_1)\end{pmatrix}\\
&\quad=\oint_C\begin{pmatrix}-\gamma_1'(s)\boldsymbol v_2-\boldsymbol v_1\gamma_2'(s)\\-\gamma_2'(s)\boldsymbol v_2-\boldsymbol v_2\gamma_2'(s)\end{pmatrix}\mathrm d\boldsymbol v_1+\begin{pmatrix}\gamma_1'(s)\boldsymbol v_1+\boldsymbol v_1\gamma_1'(s)\\\gamma_2'(s)\boldsymbol v_1+\boldsymbol v_2\gamma_1'(s)\end{pmatrix}\mathrm d\boldsymbol v_2=\iint_D\begin{pmatrix}2\gamma_1'(s)\\\gamma_2'(s)\end{pmatrix}-\begin{pmatrix}-\gamma_1'(s)\\-2\gamma_2'(s)\end{pmatrix}=\gamma'(s)\cdot 3\delta,
\end{align*}
whereas
$$\int_{[\boldsymbol y,\boldsymbol x]}\gamma'(s)\det(\boldsymbol v,\mathrm d\boldsymbol v)+\boldsymbol v\det(\gamma'(s),\mathrm d\boldsymbol v)=\int_0^1-\boldsymbol c\cdot u\det(\gamma'(s),-\boldsymbol c)\;\mathrm du=\boldsymbol c\cdot\left(-\frac{1}{2}\det(\boldsymbol c,\gamma'(s))\right).$$
Combining the above results, we get
\begin{equation}\label{eq:10}\dot{\boldsymbol r}_2(s)=\boldsymbol c\cdot\left(-\frac{1}{6\delta}\det(\boldsymbol c,\gamma'(s))\right).\end{equation}
Thus, \emph{a posteriori}, the parametrization is regular. Moreover, note that the tangent vector $\dot{\boldsymbol r}_2$ is parallel to the chord $\boldsymbol c$ (cf. \thref{Dup:01}). By \eqref{eq:19}, the oriented curvature of $\Gamma_\delta(K)$ at $\boldsymbol r_2$ equals
\begin{equation}\label{eq:01}\kappa_2=\frac{\det(\dot{\boldsymbol r}_2,\ddot{\boldsymbol r}_2)}{|\dot{\boldsymbol r}_2|^3}.\end{equation}
Substituting \eqref{eq:14} to \eqref{eq:01}, after simplification, yields
\begin{equation}\label{eq:16}\kappa_2(s)=\frac{12\delta}{|\boldsymbol c|^3}\end{equation}
(cf. \eqref{eq:29}).

\begin{table}
\caption{Summary of the results for the body of flotation (see \fref{fig:01} for notation)}
\label{tab:02}
\begin{tblr}{
  colspec = {Q[m,c,.5in]X[m,c]X[m,c]},
  row{1-2} = {.33in}, row{3-4} = {.50in}, row{5-Z} = {.66in},
  cell{3}{1} = {r=2}{}, cell{3}{3} = {r=2}{},
}
\toprule
& $\Pi_\delta(K)$ & $\Gamma_\delta(K)$ \\
\midrule
$\boldsymbol r'\overset{?}{\parallel}\boldsymbol c$ & $\checkmark$ & $\checkmark$ \\
\midrule
$\kappa$ & $\displaystyle\frac{\|\boldsymbol c\|^3}{|\boldsymbol c|^3}$ & $\displaystyle\frac{12\delta}{|\boldsymbol c|^3}$ \\
& $\displaystyle\frac{4}{|\boldsymbol c|(\cot\alpha+\cot\beta)}$ & \\
\midrule
$\kappa'$ & $\displaystyle\frac{24(\cot\alpha-\cot\beta)}{(\cot\alpha+\cot\beta)^2|\boldsymbol c|^2}-\frac{8\left(\frac{\sin^3\alpha}{\kappa(s)}-\frac{\sin^3\beta}{\kappa(t)}\right)}{(\cot\alpha+\cot\beta)^3|\boldsymbol c|}$ & $\displaystyle\frac{216\delta^2(\cot\alpha-\cot\beta)}{|\boldsymbol c|^6}$ \\
\bottomrule
\end{tblr}
\end{table}

\subsection{The body of illumination}

This time, let $K\subset\mathbb R^2$ be a strongly convex body with boundary of class $C^3$, let $I^\delta$ be the body of illumination for $K$ and some $\delta\in(0,\Vol_2(K))$, and let $\Gamma^\delta(K)$ be the corresponding centroid curve. Again, let $\gamma:I\to\mathbb R^2$ be a regular parametrization of $\partial K$, and let $t=t(s)$ be any function of class $C^1$. Denote by $\boldsymbol r_3(s)$ the point of intersection of lines tangent to $\partial K$ at $\gamma(s)$ and $\gamma(t)$. It can be written in a form $\boldsymbol r_3=\gamma+\gamma'\cdot\lambda$, where the coefficient $\lambda=\lambda(s)$ can be determined from the equation $\det(\gamma(t)-\boldsymbol r_3(s),\gamma'(t))=0$. This gives us
$$\boldsymbol r_3(s)=\gamma(s)+\gamma'(s)\cdot\frac{\det(\boldsymbol c,\gamma'(t))}{\det(\gamma'(s),\gamma'(t))}.$$
Note that the denominator does not vanish, since the tangent lines were assumed to intersect. The area $A=A(s)$ swept out by a point travelling from $\gamma(s)$ to $\gamma(t)$ along the curve $\gamma$ is equal to
$$A=-\frac{1}{2}\int_s^t\det(\gamma(u)-\boldsymbol r_3(s),\gamma'(u))\;\mathrm du,$$
whence
\begin{equation}\label{eq:30}\frac{\mathrm dA}{\mathrm ds}=\frac{1}{2}\int_s^t\det(\dot{\boldsymbol r}_3(s),\gamma'(u))\;\mathrm du=-\frac{1}{2}\det(\boldsymbol c,\dot{\boldsymbol r}_3(s)).\end{equation}

\begin{figure}
\includegraphics{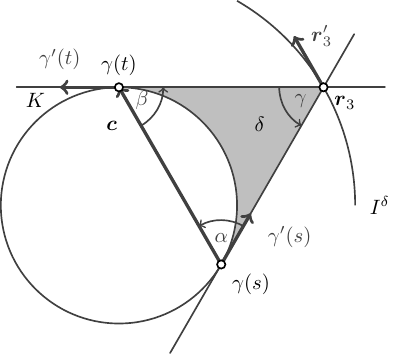}
\caption{Notations used in the argument for the body of illumination}
\label{fig:03}
\end{figure}

\subsubsection{The boundary}

From now on, let $t$ be chosen so that the area $A\equiv\delta$ is constant, which means that $\boldsymbol r_3$ is an (\emph{a priori} not necessarily regular) parametrization of $\Pi^\delta$. Then \eqref{eq:30} reads
\begin{equation}\label{eq:02}\det(\boldsymbol c,\dot{\boldsymbol r}_3)=0.\end{equation}
In particular, it follows that the tangent vector $\dot{\boldsymbol r}_3$ is parallel to the chord $\boldsymbol c$. From \eqref{eq:02} we may calculate
\begin{equation}\label{eq:03}\frac{\mathrm dt}{\mathrm ds}=\frac{\det(\boldsymbol c,\gamma'(t))^2\det(\gamma'(s),\gamma''(s))}{\det(\boldsymbol c,\gamma'(s))^2\det(\gamma'(t),\gamma''(t))}.\end{equation}
Note that the denominator does not vanish, for $\partial K$ is assumed to be strongly convex. Combining the above results, we get
$$\dot{\boldsymbol r}_3(s)=\boldsymbol c\cdot\left(-\frac{\det(\boldsymbol c,\gamma'(t))\det(\gamma'(s),\gamma''(s))}{\det(\boldsymbol c,\gamma'(s))\det(\gamma'(s),\gamma'(t))}\right).$$
Thus, \emph{a posteriori}, the parametrization is regular. Since $\delta$ is finite, the denominator does not vanish. By \eqref{eq:19}, the oriented curvature of $\Pi^\delta$ at $\boldsymbol r_3$ equals
\begin{equation}\label{eq:04}\kappa_3=\frac{\det(\dot{\boldsymbol r}_3,\ddot{\boldsymbol r}_3)}{|\dot{\boldsymbol r}_3|^3}.\end{equation}
Substituting \eqref{eq:03} to \eqref{eq:04}, after simplification, yields
$$\kappa_3(s)=-\frac{\det(\gamma'(s),\gamma'(t))(\det(\boldsymbol c,\gamma'(t))^3\det(\gamma'(s),\gamma''(s))-\det(\boldsymbol c,\gamma'(s))^3\det(\gamma'(t),\gamma''(t)))}{|\boldsymbol c|^3\det(\boldsymbol c,\gamma'(s))\det(\boldsymbol c,\gamma'(t))\det(\gamma'(s),\gamma''(s))\det(\gamma'(t),\gamma''(t))}.$$
Equivalently, the above result can be formulated as
\begin{equation}\label{eq:17}\kappa_3(s)=\frac{4\left(\frac{\sin^3\alpha}{\kappa(s)}+\frac{\sin^3\beta}{\kappa(t)}\right)}{\|\boldsymbol c\|^3}\end{equation}
(cf. \fref{fig:03}).

\subsubsection{The centroid curve}

Again, a natural (\emph{a priori} not necessarily regular) parametrization $\boldsymbol r_4$ of $\Gamma^\delta(K)$ is provided by the very definition. Namely,
$$\boldsymbol r_4(s)\colonequals\boldsymbol r_3(s)-\frac{1}{3\delta}\int_s^t(\gamma(u)-\boldsymbol r_3(s))\det(\gamma(u)-\boldsymbol r_3(s),\gamma'(u))\;\mathrm du.$$
Differentiation of the above formula yields
$$\dot{\boldsymbol r}_4(s)\colonequals\dot{\boldsymbol r}_3(s)+\frac{1}{3\delta}\int_s^t\dot{\boldsymbol r}_3(s)\det(\gamma(u)-\boldsymbol r_3(s),\gamma'(u))+(\gamma(u)-\boldsymbol r_3(s))\det(\dot{\boldsymbol r}_3(s),\gamma'(u))\;\mathrm du.$$
Now, let $C$ be a positively oriented, piecewise smooth, simple closed curve bounding the region $D$ of volume $\delta$, being the silhouette cone determined by the chord $\boldsymbol c$. Denote $\boldsymbol v(u)\colonequals\gamma(u)-\boldsymbol r_3(s)$, and observe that the latter integral equals
$$-\left(\oint_C-\int_{[\boldsymbol r_3,\boldsymbol y]}-\int_{[\boldsymbol x,\boldsymbol r_3]}\right)\dot{\boldsymbol r}_3(s)\det(\boldsymbol v,\mathrm d\boldsymbol v)+\boldsymbol v\det(\dot{\boldsymbol r}_3(s),\mathrm d\boldsymbol v).$$
Again, using Green's formula, we obtain
$$\oint_C\dot{\boldsymbol r}_3(s)\det(\boldsymbol v,\mathrm d\boldsymbol v)+\boldsymbol v\det(\dot{\boldsymbol r}_3(s),\mathrm d\boldsymbol v)=\dot{\boldsymbol r}_3(s)\cdot 3\delta,$$
whereas
$$\int_{[\boldsymbol r_3,\boldsymbol y]}\dot{\boldsymbol r}_3(s)\det(\boldsymbol v,\mathrm d\boldsymbol v)+\boldsymbol v\det(\dot{\boldsymbol r}_3(s),\mathrm d\boldsymbol v)=-\frac{1}{2}(\boldsymbol y-\boldsymbol r_3)\det(\boldsymbol y-\boldsymbol r_3,\dot{\boldsymbol r}_3(s))$$
and
$$\int_{[\boldsymbol x,\boldsymbol r_3]}\dot{\boldsymbol r}_3(s)\det(\boldsymbol v,\mathrm d\boldsymbol v)+\boldsymbol v\det(\dot{\boldsymbol r}_3(s),\mathrm d\boldsymbol v)=+\frac{1}{2}(\boldsymbol x-\boldsymbol r_3)\det(\boldsymbol x-\boldsymbol r_3,\dot{\boldsymbol r}_3(s)).$$
Combining the above results, we get
$$\dot{\boldsymbol r}_4(s)=\boldsymbol c\cdot\frac{1}{6\delta}\frac{\det(\boldsymbol c,\gamma'(t))^2\det(\gamma'(s),\gamma''(s))}{\det(\gamma'(s),\gamma'(t))^2}.$$
Note that the denominator does not vanish, since the tangent lines were assumed to intersect. Thus, \emph{a posteriori} the parametrization is regular. Moreover, note that the tangent vector $\dot{\boldsymbol r}_4$ is parallel to the chord $\boldsymbol c$. By \eqref{eq:19}, the oriented curvature of $\Gamma^\delta(K)$ at $\boldsymbol r_4$ equals
\begin{equation}\label{eq:05}\kappa_4=\frac{\det(\dot{\boldsymbol r}_4,\ddot{\boldsymbol r}_4)}{|\dot{\boldsymbol r}_4|^3}.\end{equation}
Substituting \eqref{eq:03} to \eqref{eq:05}, after simplification, yields
$$\kappa_4(s)=\frac{6\delta\det(\gamma'(s),\gamma'(t))^2(\det(\boldsymbol c,\gamma'(t))^3\det(\gamma'(s),\gamma''(s))-\det(\boldsymbol c,\gamma'(s))^3\det(\gamma'(t),\gamma''(t)))}{|\boldsymbol c|^3\det(\boldsymbol c,\gamma'(s))^2\det(\boldsymbol c,\gamma'(t))^2\det(\gamma'(s),\gamma''(s))\det(\gamma'(t),\gamma''(t))}.$$
Equivalently, the above result can be formulated as
\begin{equation}\label{eq:18}\kappa_4(s)=\frac{96\delta\left(\frac{\sin^3\alpha}{\kappa(s)}+\frac{\sin^3\beta}{\kappa(t)}\right)}{\|\boldsymbol c\|^6}\end{equation}
(cf. \fref{fig:03}).

\begin{table}
\caption{Summary of the results for the body of illumination (see \fref{fig:03} for notation)}
\label{tab:04}
\begin{tblr}{
  colspec = {Q[m,c,.5in]X[m,c]X[m,c]},
  row{1-2} = {.33in}, row{3-Z} = {.66in},
}
\toprule
& $\Pi^\delta(K)$ & $\Gamma^\delta(K)$ \\
\midrule
$\boldsymbol r'\overset{?}{\parallel}\boldsymbol c$ & $\checkmark$ & $\checkmark$ \\
\midrule
$\kappa$ & $\displaystyle\frac{4\left(\frac{\sin^3\alpha}{\kappa(s)}+\frac{\sin^3\beta}{\kappa(t)}\right)}{\|\boldsymbol c\|^3}$ & $\displaystyle\frac{96\delta\left(\frac{\sin^3\alpha}{\kappa(s)}+\frac{\sin^3\beta}{\kappa(t)}\right)}{\|\boldsymbol c\|^6}$ \\
\bottomrule
\end{tblr}
\end{table}

\subsection{Limiting cases for centrally symmetric body}\label{sec:07}

For the purposes of this section, let us assume that $K\subset\mathbb R^n$ is origin-symmetric. We will investigate the asymptotic shape of the body of flotation and the body of illumination, as the parameter $\delta$ approaches the boundary of its domain. Clearly, we have
$$\lim_{\delta\to 0^+}d_{\mathrm H}(F_\delta(K),K)=0$$
(cf. \cite[\S 4.1]{Ryabogin2023}), and similarly
$$\lim_{\delta\to 0^+}d_{\mathrm H}(I^\delta(K),K)=0,$$
where $d_{\mathrm H}$ denotes the Hausdorff distance between convex bodies \cite[\S 1.8]{Schneider_2013}. Indeed, the support functions $h_{F_\delta(K)}$ and $h_{I^\delta(K)}$ are monotonically convergent to the support function $h_K$, so by Dini's theorem the convergence on $\mathbb S^{n-1}$ is uniform, and thus we have the convergence of convex bodies (cf. \cite[Theorem~1.8.11]{Schneider_2013}).\\

\begin{figure}
\includegraphics{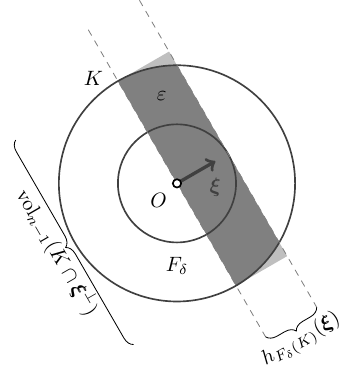}
\caption{Notations used in the argument for the asymptotic body of flotation}
\label{fig:02}
\end{figure}

\begin{notation}
In what follows, we will use the little-o notation. That is, $f(x)=o(g(x))$ if $$\lim_{x\to\infty}\frac{f(x)}{g(x)}=0.$$
\end{notation}

It is not difficult to see that
$$\lim_{\varepsilon\to 0^+}d_{\mathrm H}(\varepsilon^{-1}F_\delta(K),\mathcal I^*K)=0,$$
where $\delta\colonequals\frac{1}{2}\Vol_n(K)-\varepsilon$, and $\mathcal I^*K\colonequals(\mathcal IK)^*$ is the polar of the intersection body of $K$ \cite[\S 8.1]{Gardner_2006}. Indeed, for every $\boldsymbol\xi\in\mathbb S^{n-1}$ we have
$$\Vol_{n-1}(K\cap\boldsymbol\xi^\perp)\cdot h_{F_\delta(K)}(\boldsymbol\xi)=\varepsilon+o(\varepsilon),$$
(cf. \fref{fig:02}) whence
$$h_{\varepsilon^{-1}F_\delta(K)}(\boldsymbol\xi)=\varepsilon^{-1}h_{F_\delta(K)}(\boldsymbol\xi)=\frac{1}{\Vol_{n-1}(K\cap\boldsymbol\xi^\perp)}+o(1)=\frac{1}{\rho_{IK}(\boldsymbol\xi)}+o(1)=h_{\mathcal I^*K}(\boldsymbol\xi)+o(1).$$
Thus, the sequence of support functions converges pointwise, and the conclusion follows from \cite[Theorem~1.8.12]{Schneider_2013}. Note that since $K$ is a symmetric convex body, $\mathcal IK$ is also a symmetric convex body \cite[Corollary~8.1.11]{Gardner_2006}, so the polar body $\mathcal I^*K$ is well-defined.\\

\begin{figure}
\includegraphics{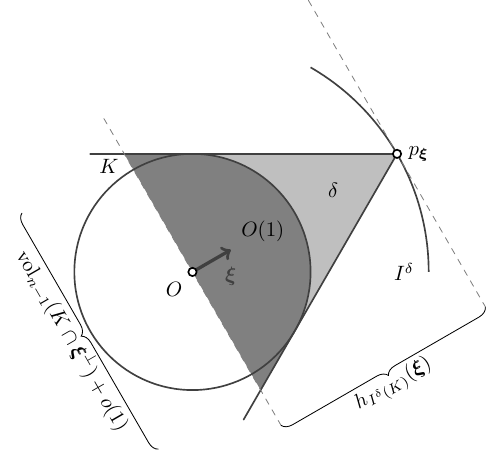}
\caption{Notations used in the argument for the asymptotic body of illumination}
\label{fig:05}
\end{figure}

Similarly, we have
$$\lim_{\delta\to\infty}d_{\mathrm H}((n\delta)^{-1}I^\delta(K),\mathcal I^*K)=0.$$
Indeed, fix any $\boldsymbol\xi\in\mathbb S^{n-1}$ and let $p_{\boldsymbol\xi}\in\Pi^\delta(K)$ be the point on the boundary such that $K\subset H_{\boldsymbol\xi}^-+p_{\boldsymbol\xi}$. The volume of the conical hull of $K$ with respect to the vertex $p_{\boldsymbol\xi}$ intersected with $H_{\boldsymbol\xi}^+$ can be computed in two ways:
$$\frac{1}{n}(\Vol_{n-1}(K\cap\boldsymbol\xi^\perp)+o(1))\cdot h_{I^\delta(K)}(\boldsymbol\xi)=\delta+O(1).$$
On the left-hand side, we compute it as a volume of a cone with base being essentially $K\cap\boldsymbol\xi^\perp$ and height equal to $h_{I^\delta(K)}(\boldsymbol\xi)$. On the right-hand side, we compute it as essentially the volume of the volumetric light cone emanating from $p_{\boldsymbol\xi}$ and the volume of $K$ intersected with $H_{\boldsymbol\xi}^+$ (cf. \fref{fig:05}). Hence, we obtain
$$h_{(n\delta)^{-1}I^\delta(K)}(\boldsymbol\xi)=(n\delta)^{-1}h_{I^\delta(K)}(\boldsymbol\xi)=\frac{1+o(1)}{\Vol_{n-1}(K\cap\boldsymbol\xi^\perp)+o(1)}=\frac{1}{\rho_{IK}(\boldsymbol\xi)}+o(1)=h_{\mathcal I^*K}(\boldsymbol\xi)+o(1).$$
Again, the sequence of support functions converges pointwise, and the conclusion follows from \cite[Theorem~1.8.12]{Schneider_2013}.\\

Finally, observe that if the boundary of $K$ is given by a parametric curve, the boundary of $\mathcal I^*K$ also admits a simple parametrization:

\begin{proposition}
Let $K\subset\mathbb R^2$ be an origin-symmetric convex body with boundary of class $C^2$, and let $\gamma:I\to\mathbb R^2$ be a regular parametrization of its boundary $\partial K$. Then the curve
$$\gamma_*\colonequals\frac{\gamma'}{2\det(\gamma,\gamma')}$$
is a regular parametrization of the boundary of $\mathcal I^*K$.
\end{proposition}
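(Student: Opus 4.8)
The plan is to pin down $\mathcal I^*K$ completely and then simply read off the parametrization of its boundary. The crucial observation, special to the plane, is that the intersection body of a symmetric body is a rotated dilate of it. Indeed, since $K=-K$, for every $\boldsymbol\xi\in\mathbb S^1$ the chord $K\cap\boldsymbol\xi^\perp$ is centered at the origin and has length $2\rho_K(R\boldsymbol\xi)$, where $R$ denotes the rotation of the plane by $\tfrac{\pi}{2}$ (the sense of rotation is irrelevant, again by central symmetry); hence $\rho_{\mathcal IK}(\boldsymbol\xi)=2\rho_K(R\boldsymbol\xi)$, i.e.\ $\mathcal IK=2R(K)$. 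Combining this with the elementary polarity identities $(\lambda L)^*=\lambda^{-1}L^*$ for $\lambda>0$ and $(TL)^*=T(L^*)$ for $T\in\mathrm O(\mathbb R^2)$, we obtain $\mathcal I^*K=(\mathcal IK)^*=\tfrac12 R(K^*)$. So it suffices to parametrize $\partial(K^*)$ and push it forward by the similarity $\tfrac12 R$.

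For $\partial(K^*)$ I would use the classical polar--reciprocal description. Orient $\gamma$ so that $\det(\gamma,\gamma')>0$ on $I$ --- which is precisely the hypothesis making $\gamma_*$ well defined. The supporting line of $K$ at $\gamma(s)$ is $\{x:\det(x-\gamma,\gamma')=0\}=\{x:\langle x,R^{-1}\gamma'\rangle=\det(\gamma,\gamma')\}$, that is, the supporting line with outer normal direction $R^{-1}\gamma'$ and support value $\det(\gamma,\gamma')>0$, so $h_K(R^{-1}\gamma')=\det(\gamma,\gamma')$ and therefore $R^{-1}\gamma'(s)/\det(\gamma(s),\gamma'(s))\in\partial(K^*)=\{h_K=1\}$. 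Conversely, since $\partial K$ is of class $C^1$, every point of $\partial(K^*)$ is obtained this way, because its associated supporting line of $K$ is tangent to $\partial K$ at some $\gamma(s)$. Hence $s\mapsto R^{-1}\gamma'(s)/\det(\gamma(s),\gamma'(s))$ is a parametrization of $\partial(K^*)$, and applying $\tfrac12 R$ yields $\tfrac12 R\bigl(R^{-1}\gamma'/\det(\gamma,\gamma')\bigr)=\gamma'/(2\det(\gamma,\gamma'))=\gamma_*$, as claimed. As an independent check of the formula one can instead compute the support function of the curve $\gamma_*$ directly: differentiating shows that the critical points of $s\mapsto\gamma_*(s)\cdot\boldsymbol\xi$ are exactly the two points with $\gamma(s)\in\boldsymbol\xi^\perp$, i.e.\ the endpoints of the chord $K\cap\boldsymbol\xi^\perp$, and at the appropriate one $\gamma_*(s)\cdot\boldsymbol\xi=1/\Vol_1(K\cap\boldsymbol\xi^\perp)=h_{\mathcal I^*K}(\boldsymbol\xi)$.

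It remains to verify regularity. Differentiating $\gamma_*=\gamma'/(2\det(\gamma,\gamma'))$ and using $\tfrac{\mathrm d}{\mathrm ds}\det(\gamma,\gamma')=\det(\gamma,\gamma'')$ together with the linear--dependence identity $\gamma''\det(\gamma,\gamma')-\gamma'\det(\gamma,\gamma'')=-\gamma\det(\gamma',\gamma'')$ (valid for any three vectors in the plane, as they are linearly dependent) gives the clean expression $\gamma_*'=-\frac{\det(\gamma',\gamma'')}{2\det(\gamma,\gamma')^2}\,\gamma$. Since $0\in\operatorname{int}K$ forces $\gamma(s)\neq\boldsymbol 0$, and by \eqref{eq:19} the quantity $\det(\gamma',\gamma'')$ equals $|\gamma'|^3$ times the oriented curvature of $\partial K$, the parametrization $\gamma_*$ is regular exactly at the points where $\partial K$ has positive curvature; in particular it is a genuine regular parametrization of $\partial(\mathcal I^*K)$ whenever $\partial K$ is of class $C^2_+$.

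I do not expect any single hard step; the delicate point is the bookkeeping of the orientation and of the two rotations --- the one turning $\mathcal IK$ into $2R(K)$ and the one appearing in the polar--reciprocal formula --- which have to cancel exactly. Reassuringly, this cancellation is forced by the central symmetry of $K$ (hence of $K^*$ and of $\mathcal IK$), which makes $R$ and $R^{-1}$ interchangeable on every body in sight, so no stray sign can survive. The only real caveat concerns the word \enquote{regular}: the displayed formula for $\gamma_*'$ shows the parametrization degenerates wherever $\partial K$ fails to be strictly convex (and there $\partial(\mathcal I^*K)$ acquires a corner), so the hypothesis is best read as $C^2_+$ rather than plain $C^2$ --- a restriction immaterial for the use of the proposition in \sref{sec:07}.
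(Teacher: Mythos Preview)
Your argument is correct and takes a genuinely different route from the paper's. The paper proceeds more intrinsically: it first computes $\gamma_*'=-\dfrac{\det(\gamma',\gamma'')}{2\det(\gamma,\gamma')^2}\,\gamma$ (exactly your formula), uses $\gamma_*'\parallel\gamma$ to argue that the Gauss map of the body $K_*$ enclosed by $\gamma_*$ is injective and hence $K_*$ is convex, and then verifies directly, for each $\boldsymbol\xi\in\mathbb S^1$, the identity $\rho_{\mathcal IK}(\boldsymbol\xi)\cdot h_{K_*}(\boldsymbol\xi)=\det\bigl(2\gamma(s_{\boldsymbol\xi}),\gamma_*(s_{\boldsymbol\xi})\bigr)=1$ at the parameter $s_{\boldsymbol\xi}$ with $\gamma(s_{\boldsymbol\xi})\perp\boldsymbol\xi$. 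Your approach instead exploits the planar accident $\mathcal IK=2R(K)$ to identify $\mathcal I^*K=\tfrac12 R(K^*)$ outright, and then simply pushes forward the classical polar--reciprocal parametrization of $\partial K^*$. This buys you a transparent explanation of \emph{why} the formula has the shape it does --- $\gamma'$ appears because it is (up to rotation) the outer normal to $\partial K$, the denominator is the support value, and the factor $\tfrac12$ is the dilation in $\mathcal IK=2R(K)$ --- at the modest cost of a little bookkeeping with rotations, which you handle correctly (central symmetry indeed makes $R$ and $R^{-1}$ interchangeable throughout). The paper's route is a bit more self-contained and stays closer to the definitions. Your observation that regularity actually needs $\det(\gamma',\gamma'')\neq 0$, i.e.\ $C^2_+$, is a valid caveat that the paper glosses over.
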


\begin{proof}
Since $\gamma$ is a closed curve, the same holds for $\gamma_*$. It is also easy to see that $\gamma_*$ is simple, so it is a Jordan curve, and thus it encloses a compact set $K_*$. Observe that $\gamma_*'\parallel\gamma$. Indeed,
\begin{equation}\label{eq:26}\gamma_*'=\frac{\gamma''\det(\gamma,\gamma')-\gamma'\det(\gamma,\gamma'')}{2\det(\gamma,\gamma')^2}=\gamma\cdot\left(-\frac{\det(\gamma',\gamma'')}{2\det(\gamma,\gamma')^2}\right).\end{equation}
In particular, the Gauss map of $K_*$ is injective, and thus $K_*$ is convex. Now, fix any $\boldsymbol\xi\in\mathbb S^1$, and let $s_{\boldsymbol\xi}\in I$ be a point such that $\gamma(s_{\boldsymbol\xi})\perp\boldsymbol\xi$. We have
$$\rho_{\mathcal IK}(\boldsymbol\xi)\cdot h_{K_*}(\boldsymbol\xi)=\det(2\gamma(s_{\boldsymbol\xi}),\gamma_*(s_{\boldsymbol\xi}))=1,$$
which concludes the proof.
\end{proof}

\subsection{Relations between the body of flotation and the body of buoyancy}

In this section, we will investigate two remarkable properties of the body of flotation and the body of buoyancy on the plane. We believe them to be rather accidental phenomena, however, and most likely they occur neither in higher dimensions nor for other types of bodies introduced in \sref{sec:04}.

\begin{notation}
Before delving into formulations and proofs, we will introduce some notation that will be very useful. It is $\frac{3}{2}\delta$ rather than $\delta$ that appears in most formulas. We therefore define $\dbar\colonequals\frac{3}{2}\delta$.
\end{notation}

Firstly, we will show that the affine normal to the centroid curve is always parallel to the affine bisector of the corresponding chord of flotation $[\boldsymbol x,\boldsymbol y]$, defined as the median of the projective triangle $\triangle\boldsymbol{xzy}$ from the vertex $\boldsymbol z$ (cf. \fref{fig:01}).

\begin{proposition}\thlabel{thm:03}
Let $K\subset\mathbb R^2$ be a convex body with boundary of class $C^2$, and let $\delta\in(0,\Vol_2(K))$. Then the affine normal to the centroid curve is always parallel to the affine bisector of the corresponding chord $[\boldsymbol x,\boldsymbol y]$. Moreover, if the corresponding directions $\dot{\boldsymbol x},\dot{\boldsymbol y}$ are not parallel, the following equality holds:
$$\boldsymbol r_2''=\frac{8\dbar^{\frac{1}{3}}}{\|\boldsymbol c\|^3}(\boldsymbol r_1-\boldsymbol z).$$
\end{proposition}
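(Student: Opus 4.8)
The plan is to compute the affine normal $\boldsymbol r_2''$ of the centroid curve $\Gamma_\delta(K)$ directly from the parametrization $\boldsymbol r_2=\boldsymbol r_2(s)$ and the formula \eqref{eq:10}, and to compare it with an explicit description of the affine bisector $\boldsymbol r_1-\boldsymbol z$. Since $B_\delta(K)$ is strongly convex by \thref{Dup:01}, the curve $\Gamma_\delta(K)$ is non-degenerate, so its affine arc length parameter $\sigma$ is defined and $\boldsymbol r_2''$ makes sense. Writing $\phi\colonequals\mathrm d\sigma/\mathrm ds=\det(\dot{\boldsymbol r}_2,\ddot{\boldsymbol r}_2)^{1/3}$ (cf. \eqref{eq:07}), the chain rule gives
\[
\boldsymbol r_2''=\frac{\ddot{\boldsymbol r}_2}{\phi^2}-\frac{\dot\phi}{\phi^3}\,\dot{\boldsymbol r}_2 .
\]
Throughout I abbreviate \eqref{eq:10} as $\dot{\boldsymbol r}_2=p\,\boldsymbol c$ with $p\colonequals-\det(\boldsymbol c,\dot{\boldsymbol x})/(6\delta)=-\det(\boldsymbol c,\dot{\boldsymbol x})/(4\dbar)$, and I use \eqref{eq:14} in the form $\dot{\boldsymbol c}=t'\dot{\boldsymbol y}-\dot{\boldsymbol x}$ with $t'=-\det(\boldsymbol c,\dot{\boldsymbol x})/\det(\boldsymbol c,\dot{\boldsymbol y})$, where $\dot{\boldsymbol x}=\gamma'(s)$, $\dot{\boldsymbol y}=\gamma'(t)$ are the two tangent directions and $\dot{\boldsymbol c}=\mathrm d\boldsymbol c/\mathrm ds$.

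\emph{Step 1 (the affine bisector is parallel to $\dot{\boldsymbol c}$).} I would write $\boldsymbol z=\boldsymbol x-a\dot{\boldsymbol x}=\boldsymbol y-b\dot{\boldsymbol y}$, so that $\boldsymbol c=b\dot{\boldsymbol y}-a\dot{\boldsymbol x}$ and, taking determinants against $\dot{\boldsymbol y}$ and $\dot{\boldsymbol x}$, $a=-\det(\boldsymbol c,\dot{\boldsymbol y})/\det(\dot{\boldsymbol x},\dot{\boldsymbol y})$ and $b=-\det(\boldsymbol c,\dot{\boldsymbol x})/\det(\dot{\boldsymbol x},\dot{\boldsymbol y})$. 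Summing the two expressions for $\boldsymbol z$ yields $2(\boldsymbol r_1-\boldsymbol z)=a\dot{\boldsymbol x}+b\dot{\boldsymbol y}$, while \eqref{eq:14} gives $t'=-b/a$, hence $\dot{\boldsymbol c}=-\tfrac{b}{a}\dot{\boldsymbol y}-\dot{\boldsymbol x}=-\tfrac{1}{a}(a\dot{\boldsymbol x}+b\dot{\boldsymbol y})$. Therefore
\[
\boldsymbol r_1-\boldsymbol z=-\tfrac{a}{2}\,\dot{\boldsymbol c}=\frac{\det(\boldsymbol c,\dot{\boldsymbol y})}{2\det(\dot{\boldsymbol x},\dot{\boldsymbol y})}\,\dot{\boldsymbol c},
\]
so the affine bisector is parallel to $\dot{\boldsymbol c}$; the degenerate case $\dot{\boldsymbol x}\parallel\dot{\boldsymbol y}$ (where $\boldsymbol z$ escapes to infinity) then follows by continuity, $\dot{\boldsymbol c}$ becoming parallel to the common tangent direction.

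\emph{Step 2 (the main computation).} From $\dot{\boldsymbol r}_2=p\boldsymbol c$ I get $\ddot{\boldsymbol r}_2=\dot p\,\boldsymbol c+p\,\dot{\boldsymbol c}$ and $\phi^3=\det(\dot{\boldsymbol r}_2,\ddot{\boldsymbol r}_2)=p^2\det(\boldsymbol c,\dot{\boldsymbol c})$. Substituting $t'$ and simplifying gives the key identity $\det(\boldsymbol c,\dot{\boldsymbol c})=t'\det(\boldsymbol c,\dot{\boldsymbol y})-\det(\boldsymbol c,\dot{\boldsymbol x})=-2\det(\boldsymbol c,\dot{\boldsymbol x})=12\delta\,p=8\dbar\,p$, whence $\phi^3=8\dbar\,p^3$, i.e. $\phi=2\dbar^{1/3}p$. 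Since $\phi$ is then a constant multiple of $p$ one has $\dot\phi/\phi=\dot p/p$, so in the chain-rule formula the two terms proportional to $\boldsymbol c$ cancel and $\boldsymbol r_2''=(p/\phi^2)\dot{\boldsymbol c}=\dot{\boldsymbol c}/(4\dbar^{2/3}p)$. This already proves $\boldsymbol r_2''\parallel\dot{\boldsymbol c}\parallel(\boldsymbol r_1-\boldsymbol z)$, the first assertion. For the precise formula, substitute $\dot{\boldsymbol c}=\bigl(2\det(\dot{\boldsymbol x},\dot{\boldsymbol y})/\det(\boldsymbol c,\dot{\boldsymbol y})\bigr)(\boldsymbol r_1-\boldsymbol z)$ from Step 1 and $p=-\det(\boldsymbol c,\dot{\boldsymbol x})/(4\dbar)$ to obtain $\boldsymbol r_2''=\dfrac{-2\dbar^{1/3}\det(\dot{\boldsymbol x},\dot{\boldsymbol y})}{\det(\boldsymbol c,\dot{\boldsymbol x})\det(\boldsymbol c,\dot{\boldsymbol y})}(\boldsymbol r_1-\boldsymbol z)$, and finally invoke the definition of the affine distance, $\|\boldsymbol c\|^3=8T=-4\det(\boldsymbol c,\dot{\boldsymbol x})\det(\boldsymbol c,\dot{\boldsymbol y})/\det(\dot{\boldsymbol x},\dot{\boldsymbol y})$, to rewrite the coefficient as $8\dbar^{1/3}/\|\boldsymbol c\|^3$.

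The conceptual heart — and the only step that is not pure bookkeeping — is the identity $\det(\boldsymbol c,\dot{\boldsymbol c})=8\dbar\,p$ in Step 2, equivalently the fact that $\phi$ is a constant multiple of $p$: it is exactly this that annihilates the tangential component of the affine normal and is responsible for the surprisingly clean conclusion (and presumably the feature that breaks down in higher dimensions or for the other bodies of \sref{sec:04}). The one mild subtlety to keep track of is the branch of the cube root in the definition of $\sigma$ and of $\phi$: one works with real cube roots throughout, and since $\phi/p=2\dbar^{1/3}$ has a fixed sign this causes no difficulty and does not affect the final vector identity.
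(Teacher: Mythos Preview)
Your proof is correct and follows essentially the same approach as the paper's: compute $\boldsymbol r_2''$ via the chain rule from the $s$-parametrization, using \eqref{eq:10} and \eqref{eq:14}. The paper merely records the standard formula $\boldsymbol r_2''=\ddot{\boldsymbol r}_2\det(\dot{\boldsymbol r}_2,\ddot{\boldsymbol r}_2)^{-2/3}-\tfrac{1}{3}\dot{\boldsymbol r}_2\det(\dot{\boldsymbol r}_2,\ddot{\boldsymbol r}_2)^{-5/3}\det(\dot{\boldsymbol r}_2,\dddot{\boldsymbol r}_2)$ and dismisses the rest as ``tedious but straightforward''; your write-up actually carries out that computation and, more to the point, isolates the identity $\phi=2\dbar^{1/3}p$ (equivalently $\det(\boldsymbol c,\dot{\boldsymbol c})=8\dbar\,p$) as the mechanism that kills the tangential component --- a structural observation the paper leaves implicit.
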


\begin{proof}
We have
$$\boldsymbol r_2'=\dot{\boldsymbol r}_2\frac{\mathrm ds}{\mathrm d\sigma},\quad\boldsymbol r_2''=\ddot{\boldsymbol r}_2\left(\frac{\mathrm ds}{\mathrm d\sigma}\right)^2+\dot{\boldsymbol r}_2\frac{\mathrm d^2s}{\mathrm d\sigma^2}$$
(cf. \cite[p.~7]{Buchin}) and
$$\frac{\mathrm ds}{\mathrm d\sigma}=\det(\dot{\boldsymbol r}_2,\ddot{\boldsymbol r}_2)^{-\frac{1}{3}},\quad\frac{\mathrm d^2s}{\mathrm d\sigma^2}=-\frac{1}{3}\det(\dot{\boldsymbol r}_2,\ddot{\boldsymbol r}_2)^{-\frac{5}{3}}\det(\dot{\boldsymbol r}_2,\dddot{\boldsymbol r}_2),$$
(cf. \cite[p.~8]{Buchin}) whence
$$\boldsymbol r_2''=\ddot{\boldsymbol r}_2\det(\dot{\boldsymbol r}_2,\ddot{\boldsymbol r}_2)^{-\frac{2}{3}}-\frac{1}{3}\dot{\boldsymbol r}_2\det(\dot{\boldsymbol r}_2,\ddot{\boldsymbol r}_2)^{-\frac{5}{3}}\det(\dot{\boldsymbol r}_2,\dddot{\boldsymbol r}_2)$$
(cf. \cite[(I.3.20)]{Buchin}). Keeping in mind formulas \eqref{eq:10} and \eqref{eq:14}, the conclusion follows from a tedious but straightforward computation.
\end{proof}

Unfortunately, we were unable to find such a simple geometric definition of the affine normal, neither for the body of flotation nor for the bodies of illumination.\\

Finally, we can also deduce one more, rather unanticipated equality. The \emph{affine hypersurface area} of a convex body $K\subset\mathbb R^n$ with sufficiently smooth boundary is given by
$$\Omega(\partial K)=\int_{\partial K}\kappa(x)^{\frac{1}{n+1}}\;\mathrm d\mathcal H^{n-1}(x),$$
where $\kappa(x)$ is the generalized curvature of $\partial K$ at $x$, and $\mathcal H^{n-1}$ denotes the hypersurface measure on $\partial K$. C. Sch\"utt and E.M. Werner \cite{SchuttWerner} showed that for all convex bodies $K\in\mathbb R^n$ we have
\begin{equation}\label{eq:08}\lim_{\delta\to 0^+}\frac{\Vol_n(K)-\Vol_n(F_\delta(K))}{\delta^{\frac{2}{n+1}}}=\frac{1}{2}\left(\frac{n+1}{\Vol_{n-1}(\mathbb B^{n-1})}\right)^{\frac{2}{n+1}}\Omega(\partial K).\end{equation}
On a plane, the affine hypersurface area coincides with the affine arc length \eqref{eq:07}. Moreover, it turns out that the expression on the left-hand side of \eqref{eq:08} for $n=2$ is actually equal to the affine arc length of $\Gamma_\delta(K)$.

\begin{proposition}
Let $K\subset\mathbb R^2$ be a convex body with boundary of class $C^1$, and let $\delta\in(0,\Vol_2(K))$. Then the following equality holds:
$$\frac{\Vol_2(K)-\Vol_2(F_\delta(K))}{\dbar^{\frac{2}{3}}}=\frac{1}{2}\Omega(\Gamma_\delta(K)).$$
\end{proposition}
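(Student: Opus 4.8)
The plan is to evaluate each side of the claimed equality in terms of a regular parametrization $\gamma$ of $\partial K$, using the formulas of \sref{sec:05}, and thereby reduce the statement to a single identity for the area $\Vol_2(K)-\Vol_2(F_\delta(K))$, which is then settled by Green's theorem.

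First I would rewrite the right-hand side. By the definition of the affine hypersurface area together with a change of variables, and since $\det(\dot{\boldsymbol r}_2,\ddot{\boldsymbol r}_2)=\kappa_2|\dot{\boldsymbol r}_2|^3$ by \eqref{eq:01},
$$\Omega(\Gamma_\delta(K))=\int_{\Gamma_\delta(K)}\kappa^{\frac13}\;\mathrm d\mathcal H^1=\int\kappa_2(s)^{\frac13}\,|\dot{\boldsymbol r}_2(s)|\;\mathrm ds,$$
the last integral being taken over one full period of $s$. Substituting the curvature \eqref{eq:16} and the velocity \eqref{eq:10}, the powers of $|\boldsymbol c|$ cancel, leaving
$$\Omega(\Gamma_\delta(K))=\frac{(12\delta)^{\frac13}}{6\delta}\int|\det(\boldsymbol c,\gamma'(s))|\;\mathrm ds.$$
Because $\left(\tfrac32\right)^2\cdot12=27$, we have $\left(\tfrac32\delta\right)^{\frac23}(12\delta)^{\frac13}=3\delta$, so that, recalling $\dbar=\tfrac32\delta$,
$$\tfrac12\,\dbar^{\frac23}\,\Omega(\Gamma_\delta(K))=\tfrac14\int|\det(\boldsymbol c,\gamma'(s))|\;\mathrm ds.$$
Hence it suffices to prove the area identity $\Vol_2(K)-\Vol_2(F_\delta(K))=\tfrac14\int|\det(\boldsymbol c,\gamma'(s))|\;\mathrm ds$.

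To establish it I would apply Green's theorem to the ring-shaped region $K\setminus F_\delta(K)$, whose boundary consists of $\partial K$ with its positive orientation and of $\Pi_\delta$ with the opposite one. Parametrizing $\Pi_\delta$ by $\boldsymbol r_1=\tfrac12(\boldsymbol x+\boldsymbol y)$, which traverses it once, coherently with $\gamma$, the shoelace formula $\Vol_2=\tfrac12\oint\det(\cdot,\mathrm d\cdot)$ gives
$$\Vol_2(K)-\Vol_2(F_\delta(K))=\tfrac12\int\bigl(\det(\boldsymbol x,\dot{\boldsymbol x})-\det(\boldsymbol r_1,\dot{\boldsymbol r}_1)\bigr)\;\mathrm ds.$$
Expanding $\det(\boldsymbol r_1,\dot{\boldsymbol r}_1)=\tfrac14\det(\boldsymbol x+\boldsymbol y,\dot{\boldsymbol x}+\dot{\boldsymbol y})$ into four terms, I would use that $u=t(s)$ is an orientation-preserving reparametrization of $\partial K$, so that $\int\det(\boldsymbol y,\dot{\boldsymbol y})\,\mathrm ds=\int\det(\boldsymbol x,\dot{\boldsymbol x})\,\mathrm ds=2\Vol_2(K)$, together with $\int\mathrm d\bigl[\det(\boldsymbol x,\boldsymbol y)\bigr]=0$, which makes the two cross terms $\int\det(\boldsymbol x,\dot{\boldsymbol y})\,\mathrm ds$ and $\int\det(\boldsymbol y,\dot{\boldsymbol x})\,\mathrm ds$ equal. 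The four contributions then combine to give $\tfrac14\int\det(\gamma'(s),\boldsymbol y-\boldsymbol x)\;\mathrm ds$; and since $\boldsymbol c=\boldsymbol y-\boldsymbol x$ points to the interior side of $\partial K$, the integrand is exactly $|\det(\boldsymbol c,\gamma'(s))|$, which is the desired identity.

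It remains to address regularity: the curvature identity \eqref{eq:16}, and hence the first step above, was obtained for $\partial K$ of class $C^2$, while the present hypothesis is only $\partial K\in C^1$. By \thref{thm:09} the curve $\Gamma_\delta(K)$ is then of class $C^2$, so $\Omega(\Gamma_\delta(K))$ is well-defined, and the first step extends to this case by approximating $K$ in the Hausdorff metric by convex bodies with $C^2$ boundary, both sides of the resulting identity depending continuously on $K$; the Green's theorem step uses only $\gamma$ and $\gamma'$ and is already valid for $\partial K\in C^1$. I expect the main obstacle to be the careful bookkeeping of orientations and signs in the second step — in particular verifying that the cross terms combine in the way stated and that the final integrand carries a constant sign — while the remaining computations are routine.
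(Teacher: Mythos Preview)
Your proposal is correct and follows essentially the same route as the paper: compute $\Omega(\Gamma_\delta(K))$ via \eqref{eq:16} and \eqref{eq:10}, compute $\Vol_2(K)-\Vol_2(F_\delta(K))$ via the shoelace formula applied to $\gamma$ and $\boldsymbol r_1$, and match the two. The only noteworthy difference is in how the cross-term identity $\int\det(\boldsymbol x,\dot{\boldsymbol y})=\int\det(\boldsymbol y,\dot{\boldsymbol x})$ is obtained: you use periodicity of $\det(\boldsymbol x,\boldsymbol y)$, whereas the paper uses the pointwise relation $\det(\boldsymbol y-\boldsymbol x,\dot{\boldsymbol x}+\dot{\boldsymbol y})=0$ from \eqref{eq:21} together with $\int\det(\boldsymbol x,\dot{\boldsymbol x})=\int\det(\boldsymbol y,\dot{\boldsymbol y})$; both arguments are equally short. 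Your final paragraph on the $C^1$ versus $C^2$ discrepancy is a point the paper's proof passes over in silence.
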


\begin{proof}
By the definition of the affine arc length, we have
$$\Omega(\Gamma_\delta(K))=\int_I\kappa_2(s)^{\frac{1}{3}}|\dot{\boldsymbol r}_2(s)|\;\mathrm ds=\int_I\frac{(12\delta)^{\frac{1}{3}}}{|\boldsymbol c|}|\boldsymbol c|\frac{1}{6\delta}\det(-\boldsymbol c,\gamma'(s))\;\mathrm ds=\frac{1}{2\dbar^{\frac{2}{3}}}\int_I\det(-\boldsymbol c,\gamma'(s))\;\mathrm ds.$$
On the other hand, we have
\begin{align}
\nonumber\Vol_2(K)-\Vol_2(F_\delta(K))&=\frac{1}{2}\int_I\det(\gamma(s),\gamma'(s))\;\mathrm ds-\frac{1}{2}\int_I\det(\boldsymbol r_1(s),\dot{\boldsymbol r}_1(s))\;\mathrm ds\\
\label{eq:06}&=\frac{1}{2}\int_I\det(\boldsymbol x,\dot{\boldsymbol x})\;\mathrm ds-\frac{1}{8}\int_I\det(\boldsymbol x+\boldsymbol y,\dot{\boldsymbol x}+\dot{\boldsymbol y})\;\mathrm ds.
\end{align}
Now, recall that
$$\int_I\det(\boldsymbol x,\dot{\boldsymbol x})\;\mathrm ds=2\Vol_2(K)=\int_I\det(\boldsymbol y,\dot{\boldsymbol y})\;\mathrm ds$$
(cf. \cite[\S 1-7, (1)]{do2016differential}) and
$$\det(\boldsymbol y-\boldsymbol x,\dot{\boldsymbol x}+\dot{\boldsymbol y})=0,$$
whence
$$\int_I\det(\boldsymbol x,\dot{\boldsymbol y})\;\mathrm ds=\int_I\det(\boldsymbol y,\dot{\boldsymbol x})\;\mathrm ds.$$
Simplifying \eqref{eq:06} yields
$$\Vol_2(K)-\Vol_2(F_\delta(K))=\frac{1}{4}\int_I\det(\boldsymbol x-\boldsymbol y,\dot{\boldsymbol x})\;\mathrm ds=\frac{1}{4}\int_I\det(-\boldsymbol c,\gamma'(s))\;\mathrm ds,$$
which concludes the proof.
\end{proof}

As an immediate corollary, we obtain that the function $\delta\mapsto\Omega(\Gamma_\delta(K))$ is uniformly continuous, which is by no means obvious since the affine hypersurface area as a function on the set $\mathcal K^2$ of convex bodies in $\mathbb R^2$ is not continuous in the Hausdorff metric.

%%%%%%%%%%%%%%%%%%%%%%%%%%%%%%%%%%%%%%%%%%%%%%%%%%%%%%%%%%%%%%%%%%%%%%
\section{Proofs of the theorems}\label{sec:06}
%%%%%%%%%%%%%%%%%%%%%%%%%%%%%%%%%%%%%%%%%%%%%%%%%%%%%%%%%%%%%%%%%%%%%%

We can construct the following polarity $\varphi_K$ on the real projective plane with respect to a convex body $K$ with boundary of class $C^1$, which is a bijection between points of the exterior of $K$ and lines intersecting the interior of $K$:

\begin{definition}
In the projective plane, fix a convex body $K$ with boundary of class $C^1$. Let $p$ be a point of the exterior of $K$. Precisely two lines are passing through $p$ that are tangent to $K$. The line $\ell\equalscolon\varphi_K^{-1}(p)$ connecting the points of tangency intersects the interior of $K$, and is called a \emph{polar} of the point $p$ with respect to the body $K$. Conversely, let $\ell$ be a line intersecting the interior of $K$. There are precisely two points where $\ell$ intersects the boundary of $K$. The point $p\equalscolon\varphi_K(\ell)$ being the intersection of the lines tangent to the boundary of $K$ at those points is called a \emph{pole} of the line $\ell$ with respect to the body $K$.
\end{definition}

\begin{remark}
Note that if $K$ is the unit disc, then the above polarity is precisely the reciprocation in the Euclidean plane. However, since in general it does not preserve incidence, it is not a duality in the strict sense.
\end{remark}

\subsection{Affine distance between the endpoints of chords of flotation}

Using the computational results from \sref{sec:05}, we can now establish several theorems related to the homothety conjecture as formulated in \thref{que:03}. The polarity $\varphi_K$ naturally emerges in the context of bodies of flotation and illumination. Indeed, the body of flotation $F_\delta(K)$ is defined in terms of its tangent bundle, which consists of lines intersecting the interior of $K$, whereas the body of illumination $I^\delta(K)$ is defined as a set of points of the exterior of $K$. Although in general the duality between the body of flotation and the body of illumination is only on the level of philosophy, under the assumption of the homothety conjecture, it can actually be formalized:

\begin{theorem}\thlabel{thm:01}
Let $K\subset\mathbb R^2$ be a convex body with boundary of class $C^2$, and let $\delta,\hat\delta>0$. Then:
\begin{enumerate}
\item\label{it:01} $\Pi_\delta(K)$ is homothetic to $\Gamma_\delta(K)$ with ratio $\lambda>1$ if and only if $\|\boldsymbol c\|^3$ is constant and equal to $12\delta\lambda$,
\item\label{it:02} $\Pi^{\hat\delta}(K)$ is homothetic to $\Gamma^{\hat\delta}(K)$ with ratio $\hat\lambda<1$ if and only if $\|\boldsymbol c\|^3$ is constant and equal to $24\hat\delta\hat\lambda$.
\end{enumerate}
\end{theorem}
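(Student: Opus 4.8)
The plan is to recognize the homothety hypothesis as a pointwise proportionality between the velocity vectors of the two curves involved, and then to read the constant of proportionality straight off the curvature rows of Tables~\ref{tab:02} and~\ref{tab:04}.

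First I would record the one structural fact shared by all four parametrizations $\boldsymbol r_1,\dots,\boldsymbol r_4$: each is defined over the same interval $I$ that parametrizes $\partial K$, and at every $s\in I$ its velocity is proportional to the chord $\boldsymbol c(s)=\gamma(t(s))-\gamma(s)$ — this is \eqref{eq:21}, \eqref{eq:33}, \eqref{eq:10}, \eqref{eq:02}, the displayed formula for $\dot{\boldsymbol r}_4$, and the checkmarks in the first rows of the two tables. Writing $\dot{\boldsymbol r}_a=f_a\boldsymbol c$ for $a=1,2$ one computes $\det(\dot{\boldsymbol r}_a,\ddot{\boldsymbol r}_a)=f_a^2\det(\boldsymbol c,\dot{\boldsymbol c})$, hence, via \eqref{eq:20} and \eqref{eq:01}, $\kappa_a=\det(\boldsymbol c,\dot{\boldsymbol c})/(|f_a||\boldsymbol c|^3)$; therefore the scalar ratio $f_2/f_1$ equals $\kappa_1/\kappa_2=\|\boldsymbol c\|^3/(12\delta)$ up to sign, the sign being $+$ by a direct look at \eqref{eq:33}, \eqref{eq:10} (equivalently, because $\|\boldsymbol c\|^3=8T>0$ for the affine triangle area $T$ attached to a chord of flotation of a convex body). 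This yields the unconditional identity
$$\dot{\boldsymbol r}_2(s)=\frac{\|\boldsymbol c(s)\|^3}{12\delta}\,\dot{\boldsymbol r}_1(s)\qquad\text{for every }s\in I,$$
and the same computation with \eqref{eq:17}, \eqref{eq:18} in place of \eqref{eq:15}, \eqref{eq:16} gives $\dot{\boldsymbol r}_4=\bigl(\|\boldsymbol c\|^3/(24\hat\delta)\bigr)\dot{\boldsymbol r}_3$.

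Given these identities, I would argue \eqref{it:01} as follows. \emph{If} $\|\boldsymbol c\|^3$ is constant and equal to $12\delta\lambda$, then $\dot{\boldsymbol r}_2\equiv\lambda\dot{\boldsymbol r}_1$, so $\boldsymbol r_2-\lambda\boldsymbol r_1\equiv\boldsymbol p$ is constant and $\Gamma_\delta(K)$ is the image of $\Pi_\delta(K)$ under the homothety of ratio $\lambda$ with center $\boldsymbol p/(1-\lambda)$ — well defined since $\lambda\neq1$. \emph{Conversely}, suppose $\Gamma_\delta(K)=h\bigl(\Pi_\delta(K)\bigr)$ for a homothety $h$ of ratio $\lambda$. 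By \thref{Dup:01} the curve $\Gamma_\delta(K)$ is strongly convex, so a point of it is determined by its oriented tangent direction; since $h$ preserves oriented tangent directions ($\lambda>0$) and $\dot{\boldsymbol r}_2$ runs along $\boldsymbol c$ in the same sense as $\dot{\boldsymbol r}_1$ (the factor $\|\boldsymbol c\|^3/(12\delta)$ being positive), the parametrizations $s\mapsto h(\boldsymbol r_1(s))$ and $s\mapsto\boldsymbol r_2(s)$ of $\Gamma_\delta(K)$ have the same oriented tangent at every $s$ and hence coincide; differentiating gives $\dot{\boldsymbol r}_2\equiv\lambda\dot{\boldsymbol r}_1$, and the identity above turns this into $\|\boldsymbol c\|^3\equiv12\delta\lambda$. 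Part \eqref{it:02} is handled verbatim, with $\boldsymbol r_3,\boldsymbol r_4,\,24\hat\delta$ replacing $\boldsymbol r_1,\boldsymbol r_2,\,12\delta$; the strong convexity needed for the matching is now supplied by $\Pi^{\hat\delta}(K)$, whose curvature \eqref{eq:17} is a quotient of positive quantities, so that its homothetic copy $\Gamma^{\hat\delta}(K)$ is strongly convex too.

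The one step here that is more than bookkeeping is the equivalence \enquote{homothety of the two curves $\;\Longleftrightarrow\;\dot{\boldsymbol r}_2\equiv\lambda\dot{\boldsymbol r}_1$ in the parameter $s$}, whose real content is that a homothety carrying one curve onto the other matches the two parametrizations synchronously; this, rather than any calculation, is where I expect the work to lie. One has to exclude cusps of $\Pi_\delta(K)$ — a homothety cannot send a singular point of $\Pi_\delta(K)$ to a regular point of the strongly convex $\Gamma_\delta(K)$, so the hypothesis itself forces $\Pi_\delta(K)$ to be a smooth strongly convex curve and $\boldsymbol r_1$ to be everywhere regular — and one has to check that the orientations agree, which is precisely the positivity of $f_2/f_1=\|\boldsymbol c\|^3/(12\delta)$ established above. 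With that in hand the theorem is just a transcription of the curvature entries in the two tables.
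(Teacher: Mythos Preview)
Your proposal is correct and follows essentially the same approach as the paper: both establish the unconditional identity $\dot{\boldsymbol r}_2=(\|\boldsymbol c\|^3/12\delta)\,\dot{\boldsymbol r}_1$ and then read off both implications from it, using that $\dot{\boldsymbol r}_1,\dot{\boldsymbol r}_2\parallel\boldsymbol c$ forces the homothety to match $\boldsymbol r_1(s)$ with $\boldsymbol r_2(s)$. The only cosmetic difference is that the paper obtains the identity in one line by dividing the explicit formulas \eqref{eq:33} and \eqref{eq:10}, whereas you route through the curvature ratio $\kappa_1/\kappa_2$ and then appeal to \eqref{eq:33}, \eqref{eq:10} anyway to fix the sign.
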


\begin{proof}
We will prove only part (\ref{it:01}), as the proof of part (\ref{it:02}) is completely analogous. By \eqref{eq:33} and \eqref{eq:10}, we have
\begin{equation}\label{eq:34}\dot{\boldsymbol r}_2(s)=\dot{\boldsymbol r}_1(s)\cdot\left(-\frac{1}{3\delta}\frac{\det(\boldsymbol c,\gamma'(s))\det(\boldsymbol c,\gamma'(t))}{\det(\gamma'(s),\gamma'(t))}\right)=\dot{\boldsymbol r}_1(s)\cdot\frac{\|\boldsymbol c\|^3}{12\delta}.\end{equation}
Now, suppose that $\Pi_\delta(K)$ is homothetic to $\Gamma_\delta(K)$ with ratio $\lambda$. Since $\dot{\boldsymbol r_1},\dot{\boldsymbol r}_2$ are both parallel to $\boldsymbol c$, points $\boldsymbol r_1,\boldsymbol r_2$ must correspond to each other in homothety. Thus we have $\dot{\boldsymbol r}_2=\lambda\dot{\boldsymbol r}_1$ and the desired conclusion follows. Conversely, suppose that $\|\boldsymbol c\|^3$ is constant and equal to $12\delta\lambda$. Then \eqref{eq:34} reads $\dot{\boldsymbol r}_2=\lambda\dot{\boldsymbol r}_1$, whence again the desired conclusion follows. This ends the proof.
\end{proof}

\begin{corollary}
Let $K\subset\mathbb R^2$ be a convex body with boundary of class $C^2$, and let $\delta,\hat\delta>0$ and $\lambda,\hat\lambda>0$ satisfy
\begin{equation}\label{eq:24}1/(\hat\delta\hat\lambda)=2/(\delta\lambda)\quad\text{and}\quad 1/\hat\lambda+2/\lambda=3.\end{equation}
Then the following are equivalent:
\begin{enumerate}
\item\label{it:03} $\Pi_\delta(K)$ is homothetic to $\Gamma_\delta(K)$ with ratio $\lambda$,
\item\label{it:04} $\Pi^{\hat\delta}(K)$ is homothetic to $\Gamma^{\hat\delta}(K)$ with ratio $\hat\lambda$,
\item\label{it:05} $\Pi_\delta(K)$ and $\Pi^{\hat\delta}(K)$ are dual to each other with respect to the polarity $\varphi_K$.
\end{enumerate}
\end{corollary}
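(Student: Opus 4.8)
The plan is to reduce each of the three conditions, by means of \thref{thm:01}, to the single assertion that $\|\boldsymbol c\|^3$ is constant, and then to observe that \eqref{eq:24} is exactly what forces the three resulting values of that constant to agree. Solving \eqref{eq:24} one finds $\hat\lambda=\lambda/(3\lambda-2)$ and $\hat\delta=\delta(3\lambda-2)/2$, whence $\delta+\hat\delta=\tfrac{3}{2}\delta\lambda=\dbar\lambda$ and therefore
$$8(\delta+\hat\delta)=12\delta\lambda=24\hat\delta\hat\lambda\equalscolon M.$$
By \thref{thm:01}, condition (\ref{it:03}) is equivalent to the statement that $\|\boldsymbol c\|^3\equiv M$ along the chords of flotation, and condition (\ref{it:04}) is equivalent to the statement that $\|\boldsymbol c\|^3\equiv M$ along the chords of illumination. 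It remains to bring condition (\ref{it:05}) into the same form, which is where the new work lies.

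The key new ingredient is an additive relation between the two area functionals from \sref{sec:05}. For an arbitrary chord $[\boldsymbol x,\boldsymbol y]$ of $K$, write $\boldsymbol r_3$ for its pole, i.e. the intersection of the tangents to $\partial K$ at $\boldsymbol x$ and $\boldsymbol y$ (the point $\boldsymbol z$ of \fref{fig:01}). I claim that the area $A$ cut off along the arc, as in the construction of $\Pi_\delta(K)$, and the area $\hat A$ of the silhouette cone over $K$ with apex $\boldsymbol r_3$, as in the construction of $\Pi^{\hat\delta}(K)$, satisfy $A+\hat A=\tfrac{1}{8}\|\boldsymbol c\|^3$, the signed area of the triangle $\triangle\boldsymbol{xzy}$. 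To see this I would subtract the two integral representations of $A$ and $\hat A$; their integrands differ only by $\det(\boldsymbol r_3-\boldsymbol x,\gamma'(u))$, so the difference telescopes to $\tfrac{1}{2}\det(\boldsymbol r_3-\boldsymbol x,\boldsymbol c)$, and substituting the explicit formula for $\boldsymbol r_3$ and comparing with \eqref{eq:15} identifies this with $\tfrac{1}{8}\|\boldsymbol c\|^3$. The identity holds for every chord, the two areas being measured with the orientation of $\gamma$.

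Next I would unwind condition (\ref{it:05}). Under $\varphi_K$, the line through $[\boldsymbol x,\boldsymbol y]$ corresponds to the pole $\boldsymbol r_3$; the support lines of $\Pi_\delta(K)$ are precisely the chords of flotation, and the boundary points of $\Pi^{\hat\delta}(K)$ are precisely the poles of the chords of illumination. Hence (\ref{it:05}) holds if and only if the family of chords of flotation coincides with the family of chords of illumination. Now by the additive relation, a chord of flotation, for which $A=\delta$, is simultaneously a chord of illumination, for which $\hat A=\hat\delta$, exactly when $\|\boldsymbol c\|^3=8(\delta+\hat\delta)=M$; and since every point of $\partial K$ is an endpoint of a unique chord of flotation and of a unique chord of illumination, the two families coincide if and only if this holds along the whole family, i.e. if and only if $\|\boldsymbol c\|^3\equiv M$. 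Combining this with the previous paragraph shows (\ref{it:05}) $\Leftrightarrow$ (\ref{it:03}), and the same reasoning starting from a chord of illumination (and using $8(\delta+\hat\delta)=24\hat\delta\hat\lambda$) shows (\ref{it:05}) $\Leftrightarrow$ (\ref{it:04}), completing the proof.

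The algebra of the first paragraph and the telescoping of the second are routine; the substantive point is the identification used in the third paragraph. The main obstacle is to justify rigorously that the support lines of $\Pi_\delta(K)$ are exactly the chords of flotation and that $\partial\Pi^{\hat\delta}(K)$ consists exactly of the poles of the chords of illumination, which needs some care at the possibly non-regular points of $\Pi_\delta(K)$ and at points where $\Pi^{\hat\delta}(K)$ is not strictly convex, and to check that in the regime under consideration the signed area $A+\hat A=\tfrac{1}{8}\|\boldsymbol c\|^3$ is the genuine geometric one, so that $\|\boldsymbol c\|^3\equiv 8(\delta+\hat\delta)$ is the correct translation of (\ref{it:05}).
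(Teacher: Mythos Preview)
Your proposal is correct and follows essentially the same route as the paper: both arguments hinge on \thref{thm:01} together with the identity $A+\hat A=\Vol_2(\triangle\boldsymbol{xzy})=\tfrac{1}{8}\|\boldsymbol c\|^3$, so that duality under $\varphi_K$ forces $\|\boldsymbol c\|^3=8(\delta+\hat\delta)=12\delta\lambda=24\hat\delta\hat\lambda$ to be constant. Your presentation is a touch more systematic in packaging the three conditions around the single constant $M$, and your integral derivation of the additive area relation is more explicit than the paper's one-line appeal to $\Vol_2(\triangle\boldsymbol{xyz})$, but the substance is the same.
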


\begin{proof}
We will prove only the equivalence of (\ref{it:03}) and (\ref{it:05}), as the proof of the equivalence of (\ref{it:04}) and (\ref{it:05}) is completely analogous. Suppose that $\Pi_\delta(K)$ is homothetic to $\Gamma_\delta(K)$ with ratio $\lambda$. Then, by \thref{thm:01} (\ref{it:01}), $\|\boldsymbol c\|^3=12\delta\lambda$ is constant. Solving \eqref{eq:24} for $\hat\delta,\hat\lambda$ yields $\hat\delta=\frac{3}{2}\delta\lambda-\delta$. Now, observe that $\varphi_K(\Pi_\delta(K))=\Pi^{\hat\delta}(K)$ with the same chords of flotation. Indeed, $\hat\delta=\frac{1}{8}\|\boldsymbol c\|^3-\delta=\Vol_2(\triangle\boldsymbol{xyz})-\delta$ is precisely the volume of the silhouette cone. Conversely, suppose that $\varphi_K(\Pi_\delta(K))=\Pi^{\hat\delta}(K)$. By construction, the chords of flotation are again the same. Since $\|\boldsymbol c\|^3=8\Vol_2(\triangle\boldsymbol{xyz})=8(\delta+\hat\delta)=12\delta\lambda$ is constant, by \thref{thm:01} (\ref{it:01}), $\Pi_\delta(K)$ is homothetic to $\Gamma_\delta(K)$ with ratio $\lambda$. This concludes the proof.
\end{proof}

\subsection{Affine arc length of floating arcs}

It is a classical fact that a complete system for differential invariants for the planar Euclidean group $\mathbb R^2\rtimes\mathrm{O}(\mathbb R^2)$ is provided by the curvature $\kappa$, and its successive derivatives with respect to the arc length $\kappa',\kappa'',\ldots$ \cite[p.~175]{Olver_1999}. So far, we have not gone beyond the simplest second-order invariant in our considerations. By comparing third-order invariants of $\Pi_\delta(K)$ and $\Gamma_\delta(K)$ we obtain the following original theorem:

\begin{theorem}\thlabel{thm:02}
Let $K\subset\mathbb R^2$ be a convex body with boundary of class $C^2$, and let $\delta\in(0,\Vol_2(K))$. If $\Pi_\delta(K)$ is homothetic to $\Gamma_\delta(K)$, then for every chord $\boldsymbol c$ of $K$ tangent to $\Pi_\delta(K)$ we have
\begin{equation}\label{eq:13}\frac{\sin^3\alpha}{\kappa(s)}=\frac{\sin^3\beta}{\kappa(t)}\end{equation}
(cf. \fref{fig:01}). Moreover, \eqref{eq:13} is equivalent to the fact that the affine arc length of the boundary cut off from $K$ by the chord $\boldsymbol c$ is constant.
\end{theorem}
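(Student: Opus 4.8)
The plan is to prove the two assertions in turn. For the implication that homothety of $\Pi_\delta(K)$ and $\Gamma_\delta(K)$ forces \eqref{eq:13}, I would use that a homothety is a similarity, and hence identifies every similarity-invariant local quantity of the two boundary curves; comparing the first third-order invariant already suffices. For the equivalence of \eqref{eq:13} with constancy of the cut-off affine arc length, I would simply differentiate that arc length, viewed as a function of the chord of flotation, using the formulas already collected in \sref{sec:05}.

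For the first assertion I would argue as follows. Suppose $\Pi_\delta(K)$ is homothetic to $\Gamma_\delta(K)$; by \thref{thm:01}\,(\ref{it:01}) this means that $\|\boldsymbol c\|^3$ is a constant, say $12\delta\lambda$, and then \eqref{eq:34} reads $\dot{\boldsymbol r}_2=\lambda\dot{\boldsymbol r}_1$, so $\boldsymbol r_2=\lambda\boldsymbol r_1+\mathrm{const}$: the curve $\Gamma_\delta(K)$ is the image of $\Pi_\delta(K)$ under the dilation of ratio $\lambda>0$ carrying $\boldsymbol r_1(s)$ to $\boldsymbol r_2(s)$. Consequently the Euclidean curvatures satisfy $\kappa_2=\kappa_1/\lambda$ (which one also reads straight off Table~\ref{tab:02}), and, since arc length along $\Gamma_\delta(K)$ is $\lambda$ times that along $\Pi_\delta(K)$, the arc-length derivatives satisfy $\kappa_2'=\kappa_1'/\lambda^2$ --- that is, the similarity invariant $\kappa'/\kappa^2$ takes the same value on $\Pi_\delta(K)$ at $\boldsymbol r_1(s)$ and on $\Gamma_\delta(K)$ at $\boldsymbol r_2(s)$. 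It remains to substitute the entries of Table~\ref{tab:02} into $\kappa_1'=\lambda^2\kappa_2'$. Writing $\cot\alpha+\cot\beta=4|\boldsymbol c|^2/\|\boldsymbol c\|^3=|\boldsymbol c|^2/(3\delta\lambda)$ (a restatement of the two expressions for $\kappa_1$), the first summand of $\kappa_1'$ turns into $216\delta^2\lambda^2(\cot\alpha-\cot\beta)/|\boldsymbol c|^6=\lambda^2\kappa_2'$ exactly, so the identity collapses to
$$\frac{8\left(\frac{\sin^3\alpha}{\kappa(s)}-\frac{\sin^3\beta}{\kappa(t)}\right)}{(\cot\alpha+\cot\beta)^3|\boldsymbol c|}=0,$$
which is precisely \eqref{eq:13}. (As in \thref{thm:03}, chords with $\dot{\boldsymbol x}\parallel\dot{\boldsymbol y}$ are tacitly set aside; under the present hypothesis none can occur, since for such a chord $\|\boldsymbol c\|^3$ would be infinite.)

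For the equivalence, I would parametrize the chords of flotation by $s\in I$ and put
$$L(s)\colonequals\int_s^{t(s)}\det(\gamma'(u),\gamma''(u))^{1/3}\;\mathrm du,$$
the affine arc length of the arc of $\partial K$ cut off by $\boldsymbol c(s)$ (the expression valid for an arbitrary regular parameter, cf.\ \eqref{eq:07}). Differentiating and inserting \eqref{eq:14} for $t'(s)$, the oriented-curvature formula \eqref{eq:19} in the form $\det(\gamma'(u),\gamma''(u))=\kappa(u)|\gamma'(u)|^3$, and the sign conventions $\det(\boldsymbol c,\gamma'(s))=-|\boldsymbol c||\gamma'(s)|\sin\alpha$ and $\det(\boldsymbol c,\gamma'(t))=|\boldsymbol c||\gamma'(t)|\sin\beta$ (from \fref{fig:01}, as used in deriving \eqref{eq:15}), all superfluous factors cancel and one obtains
$$L'(s)=|\gamma'(s)|\left(\frac{\kappa(t)^{1/3}\sin\alpha}{\sin\beta}-\kappa(s)^{1/3}\right).$$
Since $I$ is an interval, $L$ is constant precisely when $L'\equiv0$, which amounts to $\kappa(t)^{1/3}\sin\alpha=\kappa(s)^{1/3}\sin\beta$ along every chord of flotation; cubing this yields \eqref{eq:13}, and conversely. (At points where $\kappa$ vanishes one reads the equality in the cleared form $\kappa(t)\sin^3\alpha=\kappa(s)\sin^3\beta$, and nothing changes.)

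Both halves are, once the machinery of \sref{sec:05} is available, computationally routine; the only points needing care are the sign bookkeeping in the angle substitutions and the degenerate chords just flagged. The step that carries the argument is conceptual rather than computational --- the observation that the homothety, being a similarity, must equate the third-order similarity invariant $\kappa'/\kappa^2$ of $\Pi_\delta(K)$ and $\Gamma_\delta(K)$; after that, \eqref{eq:13} drops out of a one-line substitution, the lower-order contributions involving $\cot\alpha\pm\cot\beta$ cancelling of their own accord because they already agreed at the level of the curvature itself.
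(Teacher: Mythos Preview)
Your proposal is correct and follows essentially the same approach as the paper: both parts amount to reading off the entries of Table~\ref{tab:02}, equating $\kappa_1=\lambda\kappa_2$ and $\kappa_1'=\lambda^2\kappa_2'$, and then differentiating the cut-off affine arc length using \eqref{eq:14}. Your framing via the similarity invariant $\kappa'/\kappa^2$ and your explicit remark on the degenerate chords with $\dot{\boldsymbol x}\parallel\dot{\boldsymbol y}$ are nice touches, but the underlying computation is identical to the paper's.
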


\begin{proof}
Suppose that $\Pi_\delta(K)$ is homothetic to $\Gamma_\delta(K)$ with ratio $\lambda$. Using results from Table~\ref{tab:02} we immediately get
\begin{gather}
\label{eq:11}\frac{4}{|\boldsymbol c|(\cot\alpha+\cot\beta)}=\lambda\frac{12\delta}{|\boldsymbol c|^3},\\
\label{eq:12}\frac{24(\cot\alpha-\cot\beta)}{(\cot\alpha+\cot\beta)^2|\boldsymbol c|^2}-\frac{8\left(\frac{\sin^3\alpha}{\kappa(s)}-\frac{\sin^3\beta}{\kappa(t)}\right)}{(\cot\alpha+\cot\beta)^3|\boldsymbol c|}=\lambda^2\frac{216\delta^2(\cot\alpha-\cot\beta)}{|\boldsymbol c|^6}.
\end{gather}
Computing $\lambda$ from \eqref{eq:11} and substituting it to \eqref{eq:12} yields
$$-\frac{8\left(\frac{\sin^3\alpha}{\kappa(s)}-\frac{\sin^3\beta}{\kappa(t)}\right)}{(\cot\alpha+\cot\beta)^3|\boldsymbol c|}=0,$$
which concludes the first part of the proof.\\

To see the equivalence from the second part, observe that the affine arc length of the boundary cut off from $K$ by the chord $\boldsymbol c$ is given by
$$\int_s^t\det(\gamma'(u),\gamma''(u))^{\frac{1}{3}}\;\mathrm du.$$
The derivative of this integral is expressible as
\begin{align*}
&\det(\gamma'(t),\gamma''(t))^{\frac{1}{3}}\cdot\frac{\mathrm dt}{\mathrm ds}-\det(\gamma'(s),\gamma''(s))^{\frac{1}{3}}\\
&\quad=-\det(\gamma'(t),\gamma''(t))^{\frac{1}{3}}\frac{\det(\boldsymbol c,\gamma'(s))}{\det(\boldsymbol c,\gamma'(t))}-\det(\gamma'(s),\gamma''(s))^{\frac{1}{3}}\\
&\quad=-\det(\gamma'(t),\gamma''(t))^{\frac{1}{3}}\frac{-|\boldsymbol c||\gamma'(s)|\sin\alpha}{|\boldsymbol c||\gamma'(t)|\sin\beta}-\det(\gamma'(s),\gamma''(s))^{\frac{1}{3}}\\
&\quad=|\gamma'(s)|\sin\alpha\left(\frac{\det(\gamma'(t),\gamma''(t))^{\frac{1}{3}}}{|\gamma'(t)|\sin\beta}-\frac{\det(\gamma'(s),\gamma''(s))^{\frac{1}{3}}}{|\gamma'(s)|\sin\alpha}\right)\\
&\quad=|\gamma'(s)|\sin\alpha\left(\frac{\kappa(t)^{\frac{1}{3}}}{\sin\beta}-\frac{\kappa(s)^{\frac{1}{3}}}{\sin\alpha}\right)\\
&\quad=0,
\end{align*}
where the first equality follows from \eqref{eq:14}, and the last equality is an immediate consequence of \eqref{eq:13}. This concludes the proof.
\end{proof}

\begin{remark}
Condition \eqref{eq:13} may be equivalently expressed as
\begin{equation}\label{eq:22}\kappa(s)^{-1}(\boldsymbol n(s)\cdot(\gamma(s)-\gamma(t)))^3=\kappa(t)^{-1}(\boldsymbol n(t)\cdot(\gamma(t)-\gamma(s)))^3.\end{equation}
Note a striking resemblance to the condition
\begin{equation}\label{eq:23}\kappa(s)^{-1}(\boldsymbol n(s)\cdot\gamma(s))^3=\Const,\end{equation}
defining the class of convex bodies $\mathscr F_2$ introduced by C.M. Petty in \cite[\S 6]{Petty1985}. However, observe that \eqref{eq:23} depends on the choice of the origin (actually, the author assumed that it is the Santal\'o point of $K$), whereas \eqref{eq:22} is translation-invariant.
\end{remark}

In the same paper, the author proved that $\mathscr F_2$ consists only of ellipses (see \cite[Lemma~8.1]{Petty1985}). We wish we could do the same also in our case, assuming that \eqref{eq:22} holds merely for chords supporting $\Pi_\delta$, i.e., for $t$ functionally dependent on $s$. Without this stipulation, the theorem follows from a somewhat standard argument in differential equations:

\begin{theorem}
Let $K\subset\mathbb R^2$ be a convex body with boundary of class $C^2$. Suppose that \eqref{eq:13} holds for every chord in some open set $U$ of chords that covers $\partial K$. Then $K$ is an ellipse.
\end{theorem}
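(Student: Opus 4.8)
The plan is to translate the hypothesis into a single functional equation satisfied by an affine-arc-length parametrization of $\partial K$, and then to recognise $\partial K$ as an affine sphere, so that the theorem of Blaschke and Deicke (\thref{thm:04}) finishes the argument. As a first step, using $\det(\boldsymbol c,\gamma'(s))=-|\boldsymbol c||\gamma'(s)|\sin\alpha$, $\det(\boldsymbol c,\gamma'(t))=|\boldsymbol c||\gamma'(t)|\sin\beta$ and $\kappa=\det(\gamma',\gamma'')/|\gamma'|^3$ (with $\boldsymbol c=\gamma(t)-\gamma(s)$), one rewrites \eqref{eq:13} as
$$\frac{\det(\boldsymbol c,\gamma'(s))^3}{\det(\gamma'(s),\gamma''(s))}+\frac{\det(\boldsymbol c,\gamma'(t))^3}{\det(\gamma'(t),\gamma''(t))}=0,$$
an identity that is invariant under reparametrization of $\gamma$ and under equi-affine maps of the plane. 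Solving it for the curvature at one endpoint in terms of the position and tangent there and at a fixed second endpoint shows, on the one hand, that $\partial K$ must be strictly convex with $\kappa>0$ everywhere (otherwise the left side of \eqref{eq:13} is infinite while the right side is finite, for a suitable chord in $U$), and, on the other hand, by iterating this observation, that $\partial K$ is in fact of class $C^\infty$. We may therefore reparametrize $\partial K$ by affine arc length, writing $\boldsymbol x=\boldsymbol x(\sigma)$ with $\det(\boldsymbol x',\boldsymbol x'')=1$; since $t\mapsto t^3$ is injective, the displayed identity becomes
$$\det\bigl(\boldsymbol x(\tau)-\boldsymbol x(\sigma),\ \boldsymbol x'(\sigma)+\boldsymbol x'(\tau)\bigr)=0\qquad\text{for every }(\sigma,\tau)\in V,$$
where $V$ is the open set of parameter pairs corresponding to the chords of $U$. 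In particular $\boldsymbol x'(\sigma)+\boldsymbol x'(\tau)$ is parallel to the chord $\boldsymbol c=\boldsymbol x(\tau)-\boldsymbol x(\sigma)$.

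Next I would extract the affine curvature from this functional equation. Write $D=\det(\boldsymbol c,\boldsymbol x'(\tau))$ and $p=\det(\boldsymbol x'(\sigma),\boldsymbol x'(\tau))$, and note $D\neq 0$, since a genuine chord of a convex curve is not tangent to it, so that $\boldsymbol c$ and $\boldsymbol x'(\tau)$, as well as $\boldsymbol c$ and $\boldsymbol x'(\sigma)$, form a basis of $\mathbb R^2$. Differentiating the functional equation once in $\tau$ and once in $\sigma$, and using $\det(\boldsymbol c,\boldsymbol x'(\sigma))=-D$ (which is the functional equation itself), one gets $\det(\boldsymbol c,\boldsymbol x''(\tau))=p=\det(\boldsymbol c,\boldsymbol x''(\sigma))$; combined with $\det(\boldsymbol x'(\tau),\boldsymbol x''(\tau))=\det(\boldsymbol x'(\sigma),\boldsymbol x''(\sigma))=1$ this gives
$$\boldsymbol x''(\tau)=-\tfrac1D\,\boldsymbol c+\tfrac pD\,\boldsymbol x'(\tau),\qquad \boldsymbol x''(\sigma)=\tfrac1D\,\boldsymbol c-\tfrac pD\,\boldsymbol x'(\sigma).$$
Subtracting these two identities and replacing $\boldsymbol x'(\sigma)+\boldsymbol x'(\tau)$ by $\tfrac pD\,\boldsymbol c$ yields $\boldsymbol x''(\tau)-\boldsymbol x''(\sigma)=\lambda(\sigma,\tau)\,\boldsymbol c$ with $\lambda=(p^2-2D)/D^2$. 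Now differentiate this relation in $\tau$ with $\sigma$ held fixed: the left-hand side is $\boldsymbol x'''(\tau)=-k(\tau)\boldsymbol x'(\tau)$ by the relation $\boldsymbol x'''+k\boldsymbol x'=0$ defining the affine curvature $k$, while the right-hand side is $(\partial_\tau\lambda)\,\boldsymbol c+\lambda\,\boldsymbol x'(\tau)$. Since $\boldsymbol c$ and $\boldsymbol x'(\tau)$ are linearly independent, we read off $\partial_\tau\lambda=0$ and $k(\tau)=-\lambda(\sigma,\tau)$, so $k$ is locally constant along $\partial K$ (the variable $\sigma$ is handled symmetrically). As $U$ is open and covers $\partial K$, every point of the connected curve $\partial K$ has a neighbourhood on which $k$ is constant, hence $k$ is globally constant.

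It remains to conclude. A non-degenerate plane curve of constant affine curvature is an affine sphere: when $k=0$ its affine normals $\boldsymbol x''$ are all parallel, and when $k\neq 0$ one has $\tfrac{d}{d\sigma}(\boldsymbol x''+k\boldsymbol x)=\boldsymbol x'''+k\boldsymbol x'=0$, so that every affine normal line $\{\boldsymbol x(\sigma)+s\,\boldsymbol x''(\sigma):s\in\mathbb R\}$ passes through the fixed point $k^{-1}(\boldsymbol x''+k\boldsymbol x)$. Since $\partial K$ is connected, compact and non-degenerate — the last because $\det(\boldsymbol x',\boldsymbol x'')=1$, i.e.\ $\partial K$ is strictly convex — \thref{thm:04} applies and forces $K$ to be an ellipse.

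The step I expect to be the main obstacle is the regularity bootstrap at the very beginning: although its idea — the functional equation expresses the curvature at one endpoint through lower-order data at that endpoint, so solving and iterating gains derivatives — is entirely standard, making it rigorous (ruling out flat portions of $\partial K$, and keeping track of the domains on which the successive formulas are valid) is somewhat technical. By contrast, the algebraic heart of the proof, namely the two expressions for $\boldsymbol x''$, their subtraction, and the single differentiation that produces $k'\equiv 0$, is a short computation once $\partial K$ is known to be smooth and strictly convex.
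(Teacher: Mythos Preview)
Your proof is correct and takes a genuinely different route from the paper's. The paper fixes one endpoint of the chord, writes both boundary arcs locally as graphs, and observes that \eqref{eq:13} becomes a second-order ODE for the graph function at the moving endpoint, with coefficients depending only on second-order data at the fixed endpoint; Picard--Lindel\"of uniqueness then forces the arc to coincide with the unique conic having second-order contact at the fixed endpoint and first-order contact at the moving one. This argument is short, entirely local, and works directly under the $C^2$ hypothesis without any bootstrap.

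Your approach, by contrast, exploits the equi-affine invariance of the condition: after bootstrapping to $C^\infty$ and passing to the affine arc-length parametrization, the identity $\det\bigl(\boldsymbol x(\tau)-\boldsymbol x(\sigma),\,\boldsymbol x'(\sigma)+\boldsymbol x'(\tau)\bigr)=0$ lets you compute the affine normals $\boldsymbol x''(\sigma)$ and $\boldsymbol x''(\tau)$ explicitly, subtract, and differentiate once more to see that the affine curvature $k$ is locally constant; Blaschke--Deicke (\thref{thm:04}) then closes the argument. The payoff is conceptual: you identify precisely \emph{why} the curve is an ellipse (it is a compact proper affine sphere), and your computation even yields the value $k=(2D-p^2)/D^2$ along the way. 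The cost is the regularity step, which---as you note---is routine in principle (the functional equation expresses $\det(\gamma',\gamma'')$ smoothly in terms of first-order data, so one gains a derivative at each pass) but does require a few lines to state carefully, and you need at least $C^4$ in the original parametrization before $\boldsymbol x'''$ makes sense in affine arc length. Both methods are sound; the paper's is the quicker one-page argument, while yours integrates more naturally with the affine-geometric machinery developed elsewhere in the paper.
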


\begin{proof}
Fix any point $x\in\partial K$ and any chord $[x,y]\in U$ that passes through $x$. By rotating the coordinate system, without loss of generality, we may assume that the chord is vertical. Hence, the boundary curve $\gamma$ in some neighbourhood of $x=\gamma(s_*)$ can be locally represented as a graph of a function $\gamma(s_*+s)=(s,f_1(s))$, and on some neighbourhood of $y=\gamma(t_*)$ can be locally represented as a graph of a function $\gamma(t_*+t)=(t,f_2(t))$. Then \eqref{eq:13} reads
\begin{equation}\label{eq:25}f_1''(s)=\left(\frac{f_2(t)-f_1(s)-(t-s)f_1'(s)}{f_1(s)-f_2(t)-(s-t)f_2'(t)}\right)^3f_2''(t).\end{equation}
Observe that the denominator does not vanish, as it is equal to $\det(\gamma(t_*+t)-\gamma(s_*+s),\gamma'(t_*+t))$. Since the right-hand side of \eqref{eq:25} is a smooth function of $s,f_1(s),f_1'(s)$, it satisfies the assumptions of Picard-Lindel\"of theorem, whence \eqref{eq:25} has a unique solution $f_1(s)$ on some neighbourhood of $s_*$. On the other hand, any quadric is known to satisfy the assumptions of \thref{thm:02}, and thus the unique quadric having second-order contact with $\gamma$ at $y$ and first-order contact with $\gamma$ at $x$ satisfies \eqref{eq:25}. It follows that $\gamma$ is a quadric on some neighbourhood of $s_*$. Because the choice of $s_*$ was arbitrary, this concludes the proof.
\end{proof}

\begin{corollary}
Let $K\subset\mathbb R^2$ be a convex body with boundary of class $C^2$. The set of all values of $\delta$ for which $\Pi_\delta(K)$ is homothetic to $\Gamma_\delta(K)$ is nowhere dense in $(0,\Vol_2(K))$, unless $K$ is an ellipse.
\end{corollary}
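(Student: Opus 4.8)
The plan is to argue by contraposition: assuming that
\[
S\colonequals\{\delta\in(0,\Vol_2(K)):\Pi_\delta(K)\text{ is homothetic to }\Gamma_\delta(K)\}
\]
is \emph{not} nowhere dense, I will deduce that $K$ is an ellipse. Since $(0,\Vol_2(K))$ is open, the hypothesis gives an open interval $J$ with $\overline J\subset(0,\Vol_2(K))$ and $J\subseteq\overline S$; in particular $S\cap J$ is dense in $J$. The first step is to organize the chords of flotation for all levels $\delta\in J$ into one smooth two-parameter family. Writing $\gamma$ for a regular parametrization of $\partial K$, for each pair $(s,\delta)$ let $t=t(s,\delta)$ be determined by the requirement that the arc of $\gamma$ from $\gamma(s)$ to $\gamma(t)$ cut off area $\delta$ from $K$. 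Arguing exactly as for \eqref{eq:14} — the implicit function theorem applied to the area function, which is $C^1$ jointly in $(s,t)$ with partial derivative in $t$ equal to $\tfrac12\det(\boldsymbol c,\gamma'(t))\neq0$ (a chord of a convex curve is not tangent to it) — the function $t$ is $C^1$ in $(s,\delta)$, and the assignment $(s,\delta)\mapsto\bigl(\gamma(s),\gamma(t(s,\delta))\bigr)$ has Jacobian $\partial t/\partial\delta\neq0$, hence parametrizes an open set $U$ of chords of $K$. Moreover $U$ covers $\partial K$: already the chords of flotation at a single level $\delta_0\in J$ realize every point of $\partial K$ as an endpoint.

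The second step is a soft continuity/density argument. Consider
\[
F(s,\delta)\colonequals\frac{\sin^3\alpha}{\kappa(s)}-\frac{\sin^3\beta}{\kappa\bigl(t(s,\delta)\bigr)},
\]
where $\alpha,\beta$ are the angles that the chord $\boldsymbol c$ makes with the tangent lines to $\partial K$ at its two endpoints (cf.\ \fref{fig:01}). Because $\gamma$ is of class $C^2$ its curvature $\kappa$ is continuous, and $\alpha,\beta$ depend continuously (indeed $C^1$) on $(s,\delta)$, so $F$ is continuous on $I\times J$. By \thref{thm:02}, for every $\delta\in S\cap J$ we have $F(\cdot,\delta)\equiv0$ on $I$; since $S\cap J$ is dense in $J$, the zero set of $F$ contains the dense subset $I\times(S\cap J)$ of $I\times J$, and continuity forces $F\equiv0$ on all of $I\times J$. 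Hence \eqref{eq:13} holds for every chord in the open set $U$, which covers $\partial K$, and the theorem immediately preceding this corollary then yields that $K$ is an ellipse.

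The only point requiring real care is the bookkeeping of the first step, namely verifying that $t(s,\delta)$ is jointly $C^1$ and that the resulting two-parameter family of chords of flotation genuinely sweeps out an \emph{open} set of chords (so that the hypothesis of the preceding theorem is met); everything after that is purely formal, feeding \thref{thm:02} and that theorem into a density argument. I do not expect a genuine obstacle here. As a sanity check on the statement, note that the exceptional case is consistent: for an ellipse one has $S=(0,\Vol_2(K))$, since for the disc the quantities $|\boldsymbol c|$ and $\alpha=\beta$ are constant along the chords of flotation of any fixed level, making $\Pi_\delta$ and $\Gamma_\delta$ homothetic, and this propagates to all ellipses by affine invariance.
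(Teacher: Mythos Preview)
Your argument is correct and is precisely the route the paper intends: the corollary is stated without proof immediately after the theorem on open sets of chords, and your contrapositive density argument---using \thref{thm:02} on each $\delta\in S\cap J$, passing to the closure by continuity of $F$, and then invoking that theorem---is exactly the intended deduction. The only cosmetic point is that since $\partial K$ is merely $C^2$ the curvature $\kappa$ may vanish, so it is cleaner to work with $F(s,\delta)=\kappa(t)\sin^3\alpha-\kappa(s)\sin^3\beta$ rather than the quotient form; this changes nothing in the argument.
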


\begin{figure}
\includegraphics{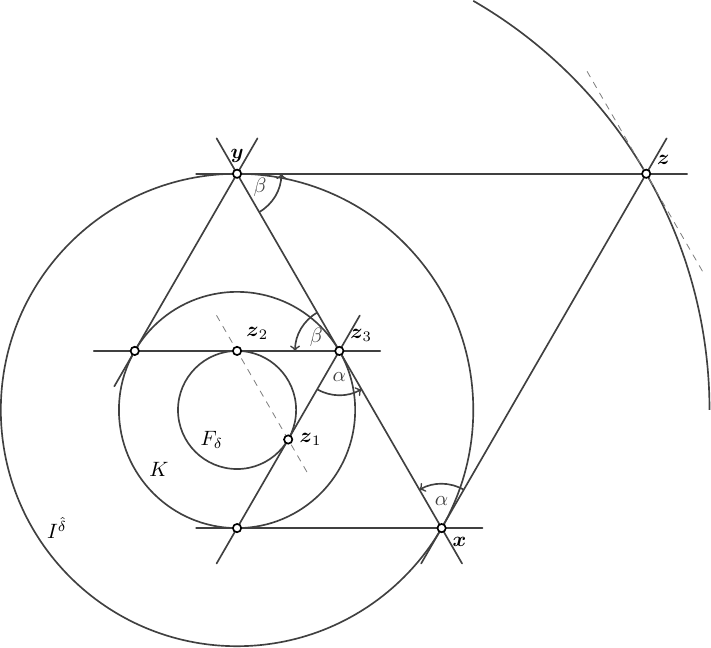}
\caption{Construction of the dual body $\varphi_{I^{\hat\delta}(K)}(K)$}
\label{fig:06}
\end{figure}

\begin{remark}
Under the assumption of \thref{thm:02}, not only the pair $(F_\delta(K),K)$ satisfies \eqref{eq:13}, but the same equality holds also for $(K,I^{\hat\delta}(K))$. Indeed, by \eqref{eq:17} and \thref{thm:02,thm:01} we have
$$\frac{\sin^3\alpha}{\kappa_3(\boldsymbol x)}=\frac{\sin^3\alpha}{\frac{4\left(\frac{\sin^3\alpha}{\kappa_1(\boldsymbol z_3)}+\frac{\sin^3\alpha}{\kappa_1(\boldsymbol z_3)}\right)}{\|\boldsymbol c_1\|^3}}=\frac{\kappa_1(\boldsymbol z_3)\|\boldsymbol c_1\|^3}{8}=\frac{\kappa_1(\boldsymbol z_3)\|\boldsymbol c_2\|^3}{8}=\frac{\sin^3\beta}{\frac{4\left(\frac{\sin^3\beta}{\kappa_1(\boldsymbol z_3)}+\frac{\sin^3\beta}{\kappa_1(\boldsymbol z_3)}\right)}{\|\boldsymbol c_2\|^3}}=\frac{\sin^3\beta}{\kappa_3(\boldsymbol y)}$$
(cf. \fref{fig:06}). But it should be strongly emphasized that the above equality implies neither that $K$ is a body of flotation for $I^{\hat\delta}(K)$ nor that $\varphi_{I^{\hat\delta}(K)}(K)$ is a body of illumination for $I^{\hat\delta}(K)$. Further, this time it does not even imply that the affine arc length of the boundary cut off from $I^{\hat\delta}(K)$ by the chord $[\boldsymbol x,\boldsymbol y]$ is constant. A direct computation shows that the line tangent to $\varphi_{I^{\hat\delta}(K)}(K)$ at $\boldsymbol z$ is parallel to the line $\boldsymbol z_1\boldsymbol z_2$ (sic!), which in general does not need to be parallel to the line $\boldsymbol{xy}$.
\end{remark}

Following the lines of \sref{sec:05}, one can iterate the procedure of differentiating each successive higher-order curvature of the boundary of $K$ and substituting \eqref{eq:14} to obtain a complete system for differential invariants for the planar Euclidean group, constraining the derivatives of arbitrary order. Expressions quite quickly become lengthy, so computer assistance is badly needed. Experience shows that substituting new variables for $\det(\boldsymbol c,\gamma'(s))$ and $\det(\boldsymbol c,\gamma'(t))$ simplifies the formulas significantly, but still we were unable to find such a nice geometric interpretation of any subsequent result.

\subsection{The weak homothety conjecture}

Locally, one can construct two arcs that satisfy the conclusions of both \thref{thm:01,thm:02} (cf. \cite[Remark at the end of \S 2]{Bracho2004}), so any solution to the homothety conjecture really requires a global argument, e.g., some kind of billiard (cf. \cite{10.1112/blms/bdm068}). Let us therefore consider a weak variant of the same conjecture, when all three bodies associated with $K$: the body of flotation $F_\delta$, the body of buoyancy $B_\delta$ and the dual body of illumination $I^{\hat\delta}$ are assumed to be pairwise homothetic (possibly with respect to different centers). Note that such a question makes sense only because we know \emph{a posteriori} the value of $\hat\delta$.

\begin{theorem}\thlabel{thm:05}
Let $K\subset\mathbb R^2$ be a convex body with boundary of class $C^2$, and let $\delta\in(0,\Vol_2(K))$. If $\Pi_\delta(K)$, $\Gamma_\delta(K)$, and $\Pi^{\hat\delta}(K)$ are pairwise homothetic (possibly with respect to different centers), then $K$ is an ellipse.
\end{theorem}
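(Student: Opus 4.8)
The plan is to combine the two rigidity conditions that have already been extracted in this section. From \thref{thm:01} (\ref{it:01}), the homothety $\Pi_\delta(K)\sim\Gamma_\delta(K)$ forces $\|\boldsymbol c\|^3$ to be constant along every chord of flotation; equivalently, writing $\dbar=\frac32\delta$, the affine length $\|\boldsymbol c\|$ of every chord of flotation is the same. From \thref{thm:02}, the same homothety gives \eqref{eq:13}, i.e.\ $\sin^3\!\alpha/\kappa(s)=\sin^3\!\beta/\kappa(t)$, which we showed is equivalent to the statement that every chord of flotation cuts off the same affine arc length from $\partial K$. So, \emph{just from} $\Pi_\delta(K)\sim\Gamma_\delta(K)$, we already know two of the three quantities appearing in Table~\ref{tab:04} are controlled. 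The remaining hypothesis $\Pi^{\hat\delta}(K)\sim\Gamma^{\hat\delta}(K)$ — which, by the Corollary following \thref{thm:01}, is with $\hat\delta=\dbar\lambda-\delta=\tfrac18\|\boldsymbol c\|^3-\delta$ and is automatically satisfied once $\|\boldsymbol c\|^3$ is constant — will therefore not give new information; the real content must come from combining the first two facts with a \emph{global} closure argument around $\partial K$.

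First I would set up the "carousel" structure: as $s$ runs once around $I$, the chord endpoint $t=t(s)$ also runs around $I$, and the chord of flotation sweeps a closed loop of chords all having the same affine length $\|\boldsymbol c\|\equiv 2\dbar^{1/3}\lambda^{1/3}$ and all cutting off the same affine arc length, say $\ell_0$, from $\partial K$ (a fixed fraction of $\Omega(\partial K)$). Parametrizing $\partial K$ by its own affine arc length $\sigma$, the map $\sigma\mapsto\tau(\sigma)$ sending a point to the far endpoint of its chord of flotation is a homeomorphism of the affine-length circle $\mathbb R/\Omega(\partial K)\mathbb Z$; since the cut-off affine arc length $\tau(\sigma)-\sigma$ is the constant $\ell_0$, this map is simply the rotation $\sigma\mapsto\sigma+\ell_0$. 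Thus in the affine arc length parameter the chord of flotation always joins two points a fixed affine-arc-length distance apart — this is precisely the affine analogue of a Zindler carousel. I would then exploit that \eqref{eq:13}, rewritten in the affine parameter, becomes a clean functional equation relating the affine curvature $k$ of $\partial K$ at $\sigma$ and at $\sigma+\ell_0$; combined with the constancy of $\|\boldsymbol c\|$, which in affine parameters becomes an integral/geometric constraint on the chord $\boldsymbol x(\sigma+\ell_0)-\boldsymbol x(\sigma)$, one gets an overdetermined system. The standard endgame is to show the system forces the affine curvature $k$ to be constant, and then invoke \thref{thm:04}: a compact non-degenerate curve with constant affine curvature is an ellipse (it is a proper or improper affine hypersphere).

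For the curvature-constancy step I would argue as follows. Differentiating the affine-length condition $\|\boldsymbol x(\sigma+\ell_0)-\boldsymbol x(\sigma)\|=\text{const}$ and using the affine Frenet equations $\boldsymbol x'''+k\boldsymbol x'=0$ produces a relation of the form $\Phi(k(\sigma),k(\sigma+\ell_0))=0$; differentiating again (or using the next-order invariant, exactly as \thref{thm:02} used the $\kappa'$ row of Table~\ref{tab:02}) produces a second relation involving $k'(\sigma)$ and $k'(\sigma+\ell_0)$. Together with \eqref{eq:13} these should pin down $k(\sigma+\ell_0)$ and $k'(\sigma+\ell_0)$ as functions of $k(\sigma),k'(\sigma)$, giving a first-return map on the $(k,k')$-plane whose orbit is the closed curve $\sigma\mapsto(k(\sigma),k'(\sigma))$. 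One then shows this return map (an area-type or monotonicity argument, or a direct fixed-point argument of the kind used in \cite{ANRSY}) can only have a constant orbit, i.e.\ $k'\equiv 0$. Alternatively, and perhaps more cleanly for a written proof, one integrates \eqref{eq:13} around the loop: since $\tau$ is the rotation by $\ell_0$, summing the logarithmic form of \eqref{eq:13} telescopes and forces $\sin^3\!\alpha/\kappa$ to be literally constant (not just equal at matched pairs), which, fed back into the $\|\boldsymbol c\|$-constancy, yields constancy of the affine curvature.

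\textbf{Main obstacle.} The delicate point is the closure/telescoping argument. Locally, as the authors warn in the paragraph preceding the statement, one can cook up arcs satisfying both \thref{thm:01} and \thref{thm:02}, so constancy of affine curvature genuinely cannot be deduced pointwise — it must come from the global fact that $t(s)$ returns to $s$ after one loop, together with the monodromy of the chord family. Making the "telescoping" rigorous requires controlling the rotation number of the carousel map and ruling out the degenerate configurations where $\dot{\boldsymbol x}\parallel\dot{\boldsymbol y}$ (where \thref{thm:03} and the formula $\boldsymbol r_2''=\tfrac{8\dbar^{1/3}}{\|\boldsymbol c\|^3}(\boldsymbol r_1-\boldsymbol z)$ degenerate, corresponding to the pole $\boldsymbol z$ going to infinity), and at the few vertex singularities of $\boldsymbol r_1$. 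I expect that handling those exceptional chords — showing they form a measure-zero or finite set and that the invariants extend continuously across them — is where the technical weight of the proof sits; once past that, \thref{thm:04} closes the argument immediately.
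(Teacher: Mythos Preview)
You have misread the hypothesis. The theorem assumes that $\Pi_\delta(K)$, $\Gamma_\delta(K)$, and $\Pi^{\hat\delta}(K)$ are \emph{pairwise} homothetic; in particular the ``third'' hypothesis is $\Pi_\delta(K)\sim\Pi^{\hat\delta}(K)$, not $\Pi^{\hat\delta}(K)\sim\Gamma^{\hat\delta}(K)$. You are right that the latter is automatic once $\Pi_\delta\sim\Gamma_\delta$ (this is exactly the Corollary after \thref{thm:01}), but that is not what is being assumed. The assumption $\Pi_\delta(K)\sim\Pi^{\hat\delta}(K)$ is genuinely new information: duality under $\varphi_K$ does not imply homothety. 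By discarding it, your plan becomes an attack on \thref{que:03} itself, which the paper explicitly leaves open; and indeed you yourself identify the obstacle --- the local non-rigidity noted just before the statement --- without explaining how the carousel/telescoping argument would overcome it. (Your telescoping idea, for instance, needs the rotation $\sigma\mapsto\sigma+\ell_0$ to have finite order, i.e.\ rational perimetral density; this is an extra assumption, and in fact is exactly the setting of \thref{thm:07}, not \thref{thm:05}.)

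The paper's proof is short and uses the third homothety in an essential way. Since $\dot{\boldsymbol r}_1,\dot{\boldsymbol r}_2,\dot{\boldsymbol r}_3$ are all parallel to $\boldsymbol c$, the points $\boldsymbol r_1(s),\boldsymbol r_2(s),\boldsymbol r_3(s)$ correspond under the given homotheties. By \thref{thm:03} the affine normal to $\Gamma_\delta(K)$ at $\boldsymbol r_2$ is parallel to $\boldsymbol r_1-\boldsymbol z=\boldsymbol r_1-\boldsymbol r_3$; the homothety $\Pi_\delta\sim\Gamma_\delta$ transports this to the affine normal of $\Pi_\delta(K)$ at $\boldsymbol r_1$. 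Now the homothety $\Pi_\delta\sim\Pi^{\hat\delta}$ forces the line $\boldsymbol r_1\boldsymbol r_3$ to pass through a fixed center, so \emph{all} affine normals of $\Pi_\delta(K)$ meet at one point. Thus $\Pi_\delta(K)$ is a proper affine sphere, hence an ellipse by \thref{thm:04}, and then $K$ is an ellipse by \cite[Theorem~3.2]{Kurusa2015}. The step you called the ``main obstacle'' is bypassed entirely by this use of the extra homothety.
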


\begin{proof}
Let $\boldsymbol r_1,\boldsymbol r_2,\boldsymbol r_3$ be the standard parametrizations of $\Pi_\delta(K),\Gamma_\delta(K),\Pi^{\hat\delta}(K)$, respectively. Since $\dot{\boldsymbol r}_1,\dot{\boldsymbol r}_2,\dot{\boldsymbol r}_3$ are pairwise parallel, points $\boldsymbol r_1(s),\boldsymbol r_2(s),\boldsymbol r_3(s)$ must correspond to each other in homothety for every $s\in I$. By \thref{thm:03}, the line $\boldsymbol r_1\boldsymbol r_3$ is parallel to the affine normal to $\Gamma_\delta(K)$ at $\boldsymbol r_2$. But since $\Pi_\delta(K)$ and $\Gamma_\delta(K)$ are homothetic, it is precisely the affine normal to $\Pi_\delta(K)$ at $\boldsymbol r_1$. Further, since $\Pi_\delta(K)$ and $\Pi^{\hat\delta}(K)$ are homothetic, the affine normals to $\Pi_\delta(K)$ meet at one point, which is the center of homothety. Hence, $\Pi_\delta(K)$ is a proper affine hypersphere, which by \thref{thm:04} must be an ellipse. Finally, by \cite[Theorem~3.2]{Kurusa2015}, $K$ itself must be an ellipse, which concludes the proof.
\end{proof}

Now, let's further assume that $K$ is centrally symmetric, and let $\delta\to\frac{1}{2}\Vol_2(K)$. Then $\hat\delta\to\infty$, whence both $\Pi_\delta(K)$ and $\Pi^{\hat\delta}(K)$ converge, after normalization, to $\mathcal I^*K$ (cf. \sref{sec:07}). It follows that they are, in a sense, \enquote{asymptotically homothetic}, but formally they do not meet the assumptions of \thref{thm:05}. Hence, it begs to consider also its limiting case:

\begin{theorem}\thlabel{thm:06}
Let $K\subset\mathbb R^2$ be a symmetric convex body with boundary of class $C^2$, and let $\delta_*=\frac{1}{2}\Vol_2(K)$. If $\Gamma_{\delta_*}(K)$ is homothetic to $\mathcal I^*K$, then $K$ is an ellipse.
\end{theorem}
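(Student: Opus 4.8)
The plan is to exploit the fact that for an origin-symmetric $K$ and the midpoint density $\delta_*=\frac{1}{2}\Vol_2(K)$ both curves in the statement are origin-symmetric and already carry explicit parametrizations from \sref{sec:05} and \sref{sec:07}; the hypothesis then becomes a pointwise identity between them that, after a single differentiation, turns into Petty's Euclidean characterization \eqref{eq:23} of ellipses. First I would normalize by putting the centre of $K$ at the origin. At $\delta=\delta_*$ every chord of flotation of a symmetric body is a diameter: in a fixed direction the cut-off area is strictly monotone in the position of the chord, so the chord cutting off area $\frac{1}{2}\Vol_2(K)$ is unique, and since the central reflection maps $K$ to itself and sends this chord to another chord cutting off area $\frac{1}{2}\Vol_2(K)$ in the same direction, the two coincide, i.e. the chord passes through the origin. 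Hence the companion parameter $t=t(s)$ is the antipodal one, $\boldsymbol y=\gamma(t)=-\gamma(s)=-\boldsymbol x$, and $\boldsymbol c=\boldsymbol y-\boldsymbol x=-2\gamma(s)$. Substituting this into \eqref{eq:10} gives $\dot{\boldsymbol r}_2(s)=-\tfrac{2}{3\delta_*}\det(\gamma,\gamma')\,\gamma(s)$, so the natural parametrization $\boldsymbol r_2$ of $\Gamma_{\delta_*}(K)$ has tangent parallel to $\gamma(s)$; and by the proposition of \sref{sec:07} the curve $\gamma_*=\gamma'/(2\det(\gamma,\gamma'))$ parametrizes $\partial(\mathcal I^*K)$, with $\gamma_*'(s)=-\tfrac{\det(\gamma',\gamma'')}{2\det(\gamma,\gamma')^2}\,\gamma(s)$ by \eqref{eq:26}, again parallel to $\gamma(s)$ and with the same sign relative to $\gamma(s)$ once $\partial K$ is positively oriented.

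Next I would feed in the hypothesis. Both $\Gamma_{\delta_*}(K)$ — the body of buoyancy at density $\frac{1}{2}$, equivalently the centroid body of $K$ — and $\mathcal I^*K$ are origin-symmetric, so any homothety between them is the central dilation $x\mapsto\mu x$ with $\mu\neq 0$. Because $\dot{\boldsymbol r}_2(s)$ and $\gamma_*'(s)$ are nonzero multiples of $\gamma(s)$ of consistent sign and $s\mapsto\gamma(s)/|\gamma(s)|$ wraps once around $\mathbb S^1$, the two parametrizations correspond under the dilation at equal values of $s$; hence $\boldsymbol r_2=\mu\gamma_*$. Differentiating and matching the scalar coefficients of $\gamma(s)$ yields
\[\det(\gamma,\gamma')^3=\frac{3\mu\delta_*}{4}\det(\gamma',\gamma'').\]
Since $\det(\gamma,\gamma')=|\gamma'|\,h(s)$, where $h(s)$ is the support function of $K$ in the outer-normal direction at $\gamma(s)$ (the origin is interior to $K$), and $\det(\gamma',\gamma'')=\kappa(s)\,|\gamma'|^3$ by \eqref{eq:19}, cancelling $|\gamma'|^3$ collapses this to $\kappa(s)^{-1}h(s)^3=\Const$, which is exactly Petty's condition \eqref{eq:23} with the reference point at the centre of symmetry of $K$.

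To finish, recall that for a centrally symmetric convex body the centre of symmetry is its Santal\'o point — the polar body about the centre is again centrally symmetric, so its centroid is the origin — which is precisely the reference point under which C.M. Petty works. Therefore $K$ belongs to his class $\mathscr F_2$, and by \cite[Lemma~8.1]{Petty1985}, which asserts that $\mathscr F_2$ consists only of ellipses, $K$ is an ellipse.

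I expect the computations themselves to be short; the main obstacle is the bookkeeping that converts the word \enquote{homothetic} into the clean equality $\boldsymbol r_2=\mu\gamma_*$ — one must check that the two given parametrizations actually correspond under the central dilation (same tangent lines, same orientation, each traced exactly once) — together with the verification that the hypothesis of \cite[Lemma~8.1]{Petty1985} on the location of the reference point is met, which is the second place central symmetry of $K$ enters. A minor secondary point is the identification $\Gamma_{\delta_*}(K)=$ centroid body of $K$, so that its origin-symmetry is transparent; this follows from the remark in \sref{sec:04} together with the observation that at $\delta_*$ the chords of flotation are the diameters.
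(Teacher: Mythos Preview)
Your argument is correct, but it proceeds along a genuinely different route from the paper's. The paper argues directly in affine differential geometry: from \thref{thm:03} the affine normal to $\Gamma_{\delta_*}(K)$ at $\boldsymbol r_2(s)$ is parallel to $\boldsymbol r_1-\boldsymbol z$, which at $\delta_*$ is the line through the origin in the direction $\gamma'(s)\parallel\gamma_*(s)$; the homothety then transports this to the affine normal of $\mathcal I^*K$ at $\gamma_*(s)$, so all affine normals of $\mathcal I^*K$ meet at one point, whence $\mathcal I^*K$ is a proper affine hypersphere and by Blaschke--Deicke (\thref{thm:04}) an ellipse; finally a standard fact about intersection bodies forces $K$ itself to be an ellipse. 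Your route instead reduces the homothety hypothesis to Petty's condition \eqref{eq:23} and invokes \cite[Lemma~8.1]{Petty1985} as a black box.

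What is amusing is that the paper carries out \emph{exactly your computation} immediately after its proof of \thref{thm:06}---but in the opposite logical direction, using \thref{thm:06} to give a new proof of Petty's Lemma~8.1. So the two approaches are, in effect, the same equivalence ``homothety $\Leftrightarrow$ \eqref{eq:23}'' read from opposite ends: the paper first establishes the ellipse conclusion intrinsically and then deduces Petty's lemma, while you take Petty's lemma as known and deduce the theorem. Your approach is shorter and entirely self-contained modulo Petty's result; the paper's buys an independent proof of Petty's lemma and avoids importing that external reference.
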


\begin{proof}
We simply repeat the proof of \thref{thm:05}. Since both $\dot{\boldsymbol r}_2,\gamma_*'$ are parallel to $\gamma$, points $\boldsymbol r_2(s),\gamma_*(s)$ correspond to each other in homothety for every $s\in I$. By \thref{thm:03}, the affine normal to $\Gamma_{\delta_*}(K)$ at $\boldsymbol r_2$ is the line parallel to $\gamma'$ (and thus also to $\gamma_*$) passing through the origin. But since $\mathcal I^*K$ and $\Gamma_{\delta_*}(K)$ are homothetic, it is precisely the affine normal to $\mathcal I^*K$ at $\gamma_*$. Hence, $\mathcal I^*K$ is a proper affine hypersphere, which by \thref{thm:04} must be an ellipse. Finally, by \cite[Corollary~8.1.7]{Gardner_2006}, $K$ itself must be an ellipse, which concludes the proof.
\end{proof}

The above argument also gives an alternative proof of Petty's characterization of ellipses.

\begin{definition}[{cf. \cite[\S 6]{Petty1985}}]
Let $h_K$ be the supporting function of a convex body $K\subset\mathbb R^n$ with respect to its Santal\'o point $s(K)$. Let $f=ch_K^{-n-1}$, where $c>0$ is a positive constant. By \cite[(3.1) and (5.1)]{Petty1985}, there exists a convex body $K_*$ with curvature function $f$. The class of all convex bodies $K$ such that $K$ and $K_*$ are homothetic will be denoted by $\mathscr F_n$. The ellipsoids belong to $\mathscr F_n$.
\end{definition}

\begin{theorem}[{\cite[Lemma~8.1]{Petty1985}}]
The class $\mathscr F_2$ consists only of ellipses.
\end{theorem}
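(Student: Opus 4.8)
The plan is to follow the pattern of the proofs of \thref{thm:05} and \thref{thm:06}: one checks that the membership $K\in\mathscr F_2$ is equivalent to a pointwise identity that forces every affine normal of $\partial K$ to pass through a fixed point, recognizes $\partial K$ as a proper affine hypersphere, and invokes \thref{thm:04}.

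First I would put the origin at the Santal\'o point $s(K)$, so that $h_K$ is the support function of $K$ with respect to that point and $K_*$ is a convex body whose curvature function equals $c\,h_K^{-3}$; on the plane the curvature function is simply the radius of curvature written as a function of the outer normal direction. If $K$ and $K_*$ are homothetic, say $K_*=\lambda K+v$ with $\lambda>0$, then, the radius-of-curvature function being translation invariant and scaling by $\lambda$ under $K\mapsto\lambda K$, we obtain $\lambda\,\rho_K=c\,h_K^{-3}$. Writing $\gamma$ for a regular parametrization of $\partial K$ with unit outer normal $\boldsymbol n$, and using $h_K(\boldsymbol n(s))=\gamma(s)\cdot\boldsymbol n(s)$ together with $\rho_K=\kappa^{-1}$, this reads
$$\kappa(s)^{-1}\big(\boldsymbol n(s)\cdot\gamma(s)\big)^3=\Const,$$
i.e., condition \eqref{eq:23} with the origin at $s(K)$ (conversely, \eqref{eq:23} makes the radius-of-curvature function of $K_*$ proportional to that of $K$, hence $K_*$ is a translate of a dilate of $K$ by Minkowski's uniqueness theorem, so $K\in\mathscr F_2$).

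Next I would show that \eqref{eq:23} forces $\partial K$ to be a proper affine hypersphere centered at the origin. Parametrize $\partial K$ by the angle $\theta$ of its outer normal $N(\theta)=(\cos\theta,\sin\theta)$, with tangent $T(\theta)=N(\theta)^\perp$, support function $h(\theta)$ and radius of curvature $\rho(\theta)=h+\partial_\theta^2 h$; then $\gamma=hN+(\partial_\theta h)\,T$, the affine arc length satisfies $\mathrm d\sigma=\rho^{2/3}\,\mathrm d\theta$, and a short computation gives the affine normal
$$\frac{\mathrm d^2\gamma}{\mathrm d\sigma^2}=\frac{\partial_\theta\rho}{3\,\rho^{4/3}}\,T-\frac{1}{\rho^{1/3}}\,N.$$
This vector is parallel to the position vector $\gamma=hN+(\partial_\theta h)\,T$ exactly when $h\,\partial_\theta\rho=-3\rho\,\partial_\theta h$, i.e., when $\partial_\theta(\ln\rho+3\ln h)=0$, i.e., when $\rho=Ch^{-3}$ --- which is precisely \eqref{eq:23}. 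Hence, under our hypothesis, all affine normals of $\partial K$ pass through the origin, so $\partial K$ is a connected, compact, non-degenerate proper affine hypersphere (note that $\rho=Ch^{-3}>0$, so $\partial K$ is automatically strongly convex). By \thref{thm:04} it must be an ellipse; since, conversely, every ellipse belongs to $\mathscr F_2$, this completes the proof.

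The heart of the matter is the computation in the second step: one must handle the reparametrization by affine arc length carefully (the exponent $2/3$ is easy to get wrong) and keep track of the \emph{tangential} component of the affine normal, since it is precisely the balance between the tangential and the normal part that encodes $\rho=Ch^{-3}$ --- morally, this computation is the analogue, in the present setting, of the role \thref{thm:03} plays in the proofs of \thref{thm:05} and \thref{thm:06}. A secondary, purely technical point is the smoothness needed for the affine normal to be defined (Petty works with boundaries of class $C^2_+$, which is more than enough) and the fact, already part of the cited definition via \cite[(3.1) and (5.1)]{Petty1985}, that the Santal\'o point is the correct center, i.e., that $K_*$ exists; with these in hand the argument runs exactly parallel to that of \thref{thm:06}.
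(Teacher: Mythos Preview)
Your argument is correct, but it is not the paper's argument. The paper derives this theorem as a corollary of \thref{thm:06}: it observes that, for an origin-symmetric $K$ with $\delta_*=\tfrac{1}{2}\Vol_2(K)$, the homothety $\Gamma_{\delta_*}(K)\sim\mathcal I^*K$ rewrites (via \eqref{eq:10} and \eqref{eq:26}) as
\[
\frac{\det(\gamma,\gamma')^3}{\det(\gamma',\gamma'')}=\Const,
\]
which is precisely \eqref{eq:23}; then \thref{thm:06} does the work (and inside \thref{thm:06} it is $\mathcal I^*K$, not $\partial K$, that is shown to be the affine hypersphere, with \thref{thm:03} supplying the direction of the affine normal). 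Your route bypasses the bodies $\Gamma_{\delta_*}(K)$ and $\mathcal I^*K$ entirely: you parametrize $\partial K$ by the normal angle, compute the affine normal of $\partial K$ explicitly, and read off that \eqref{eq:23} is exactly the condition that this normal is proportional to the position vector, so $\partial K$ itself is the proper affine hypersphere and \thref{thm:04} finishes. Your proof is more self-contained and does not need $K$ to be centrally symmetric (which the paper's route through \thref{thm:06} formally does); on the other hand, the paper's whole purpose here is to exhibit Petty's lemma as a consequence of its new homothety theorem, a connection your direct computation does not display.
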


\begin{proof}
The homothety assumption of \thref{thm:06} may be rephrased in an equivalent way as $\dot{\boldsymbol r}_2=\lambda\gamma_*'$ for some $\lambda>0$. From \eqref{eq:10} and \eqref{eq:26} we obtain
$$\gamma\cdot\left(-\frac{2}{3\delta}\det(\gamma,\gamma')\right)=\gamma\cdot\left(-\lambda\frac{\det(\gamma',\gamma'')}{2\det(\gamma,\gamma')^2}\right),$$
whence
$$\frac{\det(\gamma,\gamma')^3}{\det(\gamma',\gamma'')}=\frac{3\delta\lambda}{4}=\Const.$$
Now, observe that this is precisely the condition \eqref{eq:23}. The fact that it characterizes ellipses follows immediately from \thref{thm:06}.
\end{proof}

\begin{remark}
In the limiting case of the original homothety conjecture, we would assume that $K$ is homothetic to $\mathcal I^*K$, which is equivalent to the classical problem formulated by H. Busemann and C.M. Petty \cite[Problem~5]{BusemannPetty}. In terms of our parametrization, it reads as follows: if $t=t(s)$ satisfies $\gamma(t)\parallel\gamma'(s)$, then
$$\gamma(t)=\lambda\frac{\gamma'(s)}{2\det(\gamma(s),\gamma'(s))}$$
for some $\lambda>0$. Differentiating the above equation yields
$$\gamma'(t)\cdot\frac{\mathrm dt}{\mathrm ds}=\gamma(s)\cdot\left(-\lambda\frac{\det(\gamma'(s),\gamma''(s))}{2\det(\gamma(s),\gamma'(s))^2}\right).$$
In particular, $\gamma'(t)\parallel\gamma(s)$. It follows that $K$ is homothetic to $\mathcal I^*K$ if and only if its boundary is a Radon curve \cite{Radon}. Hence, the answer to the limiting case of \thref{que:01} is negative, unlike the answer to the limiting case of \thref{que:02} (cf. \cite[Problem~8]{BusemannPetty}) and \thref{que:03} (cf. \thref{thm:06}). However, dimension $n=2$ may be somehow special, as it was already remarked by Busemann and Petty \cite[Problem~5]{BusemannPetty}, and by E. Milman, S. Shabelman, and A. Yehudayoff \cite[Remark~1.3]{milman2024}, who recently solved an analogous problem in arbitrary dimension $n\geq 3$, assuming that $K$ is homothetic to $\mathcal IK$ \cite[Corollary~1.2]{milman2024}.
\end{remark}

\subsection{Carousels and rational perimetral density}

While giving a negative answer to the floating body problem \cite[19. Problem: Ulam]{scottish} for density $\frac{1}{2}$ on a plane, H. Auerbach proved some remarkable properties of bodies that float in equilibrium in every orientation. Namely, that the chords of flotation have constant length \cite[\S 3]{Auerbach1938} and cut off from the boundary an arc of constant length \cite[\S 4]{Auerbach1938}. Now, observe that \thref{thm:01} and \thref{thm:02} are precisely the affine counterparts of these two facts, respectively. It is as interesting as it is unanticipated, because we obtained our theorems from completely different assumptions. J. Bracho, L. Montejano, and D. Oliveros \cite{Bracho2004} proved that the circle is the only body that floats in equilibrium in every orientation with perimetral density $\frac{1}{3}$, $\frac{1}{4}$, $\frac{1}{5}$ and $\frac{2}{5}$. We will prove an affine analog of their result for affine perimetral density $\frac{1}{3}$.

\begin{theorem}\thlabel{thm:07}
Let $K\subset\mathbb R^2$ be a convex body with boundary of class $C^2$, and let $\delta\in(0,\Vol_2(K))$. If $\Pi_\delta(K)$ is homothetic to $\Gamma_\delta(K)$, and if $\delta$ is such that every chord of flotation cuts off from the boundary exactly $\frac{1}{3}$ of its total affine arc length, then $K$ is an ellipse.
\end{theorem}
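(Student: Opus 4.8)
The plan is to turn the two hypotheses into an explicit \emph{carousel} of inscribed triangles, to prove that their common centroid is a fixed point, and then to feed that rigidity into a delay differential equation whose only admissible periodic solutions parametrize ellipses.

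First I would note that, by \thref{thm:01}, the homothety $\Pi_\delta(K)\sim\Gamma_\delta(K)$ is equivalent to $\|\boldsymbol c\|^3\equiv 12\delta\lambda$ being a positive constant; since $\Gamma_\delta(K)=B_\delta(K)$ is strongly convex (\thref{Dup:01}), so is $\Pi_\delta(K)=F_\delta(K)$, and from \eqref{eq:15} one sees that $\partial K$ is then strongly convex as well, so $\partial K$ carries a global affine arc length parameter $\boldsymbol x=\boldsymbol x(\sigma)$, $\sigma\in\mathbb R/L\mathbb Z$, with $L=\Omega(\partial K)$ (write $'$ for $\mathrm d/\mathrm d\sigma$). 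The $\tfrac13$ assumption says precisely that the chord of flotation through $\boldsymbol x(\sigma)$ has its other endpoint at $\boldsymbol x(\sigma+L/3)$, so $\boldsymbol c(\sigma)=\boldsymbol x(\sigma+L/3)-\boldsymbol x(\sigma)$ and $\boldsymbol r_1(\sigma)=\tfrac12\bigl(\boldsymbol x(\sigma)+\boldsymbol x(\sigma+L/3)\bigr)$. Hence, for every $\sigma$, the triangle $T_\sigma$ with vertices $\boldsymbol x_i\colonequals\boldsymbol x(\sigma+iL/3)$, $i=0,1,2$, has all three sides chords of flotation: they share the affine length $\|\boldsymbol c\|$, each is tangent to $\Pi_\delta(K)$ at its midpoint (\thref{thm:19}, \eqref{eq:21}), and $\Vol_2(T_\sigma)=\Vol_2(K)-3\delta$ is a constant, since the three complementary caps --- the regions cut off by the three chords --- each have area $\delta$.

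Next comes the crux: showing that the centroid $G(\sigma)\colonequals\tfrac13(\boldsymbol x_0+\boldsymbol x_1+\boldsymbol x_2)$ is independent of $\sigma$. Writing \eqref{eq:21} with $t=\sigma+L/3$ gives $\boldsymbol x'(\sigma)+\boldsymbol x'(\sigma+L/3)=\mu(\sigma)\boldsymbol c(\sigma)$ for a scalar $\mu$, hence $\boldsymbol x'(\sigma)=\tfrac12(\mu\boldsymbol c-\boldsymbol c')$ and $\boldsymbol x'(\sigma+L/3)=\tfrac12(\mu\boldsymbol c+\boldsymbol c')$; plugging these into the affine-distance identity $\|\boldsymbol c\|^3=8T$ yields the clean formula $\det(\boldsymbol c,\boldsymbol c')=\tfrac12\|\boldsymbol c\|^3\mu$. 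On the other hand $\Vol_2(T_\sigma)=\tfrac12\det\bigl(\boldsymbol c(\sigma),\boldsymbol c(\sigma+L/3)\bigr)$, so differentiating its constancy gives $\det\bigl(\boldsymbol c'(\sigma),\boldsymbol c(\sigma+L/3)\bigr)+\det\bigl(\boldsymbol c(\sigma),\boldsymbol c'(\sigma+L/3)\bigr)=0$. Eliminating the mixed determinants by means of the relation $\mu(\sigma)\boldsymbol c(\sigma)+\boldsymbol c'(\sigma)=\mu(\sigma+L/3)\boldsymbol c(\sigma+L/3)-\boldsymbol c'(\sigma+L/3)$ (which merely equates the two expressions for $\boldsymbol x'(\sigma+L/3)$), together with $\det(\boldsymbol c,\boldsymbol c')=\tfrac12\|\boldsymbol c\|^3\mu$ applied at $\sigma$ and at $\sigma+L/3$, collapses this to $\bigl(\tfrac12\|\boldsymbol c\|^3+2(\Vol_2(K)-3\delta)\bigr)\bigl(\mu(\sigma)-\mu(\sigma+L/3)\bigr)=0$. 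The first factor is strictly positive, so $\mu$ has period $L/3$; as $\boldsymbol c_0+\boldsymbol c_1+\boldsymbol c_2=0$, it follows that $3G'(\sigma)=\boldsymbol x'(\sigma)+\boldsymbol x'(\sigma+L/3)+\boldsymbol x'(\sigma+2L/3)=\tfrac12\bigl(\mu(\boldsymbol c_0+\boldsymbol c_1+\boldsymbol c_2)-(\boldsymbol c_0'+\boldsymbol c_1'+\boldsymbol c_2')\bigr)=0$, so $G$ is a fixed point, which I normalize to the origin. With $G=0$ one then has $\boldsymbol r_1(\sigma)=-\tfrac12\boldsymbol x(\sigma+2L/3)$, i.e. $F_\delta(K)$ is homothetic to $K$ with ratio $-\tfrac12$. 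Comparing the curvature of $\Pi_\delta(K)$ at $\boldsymbol r_1(\sigma)$ --- which by \eqref{eq:15} equals $\|\boldsymbol c(\sigma)\|^3/|\boldsymbol c(\sigma)|^3=12\delta\lambda/|\boldsymbol c(\sigma)|^3$ --- with its value $2\kappa\bigl(\boldsymbol x(\sigma+2L/3)\bigr)$ forced by that homothety, and using $\kappa(\boldsymbol x(\tau))=|\boldsymbol x'(\tau)|^{-3}$ (valid for the affine parameter, by \eqref{eq:09} and \eqref{eq:19}) together with $\boldsymbol x'(\tau)\parallel\boldsymbol c(\tau-2L/3)$ (\thref{Dup:02}), I would obtain the delay differential equation $\boldsymbol x'(\tau)=\varepsilon\bigl(\boldsymbol x(\tau-L/3)-\boldsymbol x(\tau-2L/3)\bigr)$ with $\varepsilon=\pm(6\delta\lambda)^{-1/3}$ a nonzero constant. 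Expanding $\boldsymbol x(\tau)=\sum_n\boldsymbol a_ne^{2\pi in\tau/L}$ (so $\boldsymbol a_0=G=0$), the $n$-th Fourier coefficient reads $\tfrac{2\pi in}{L}\boldsymbol a_n=\varepsilon(\zeta^{-n}-\zeta^{-2n})\boldsymbol a_n$ with $\zeta=e^{2\pi i/3}$; since $\zeta^{-n}-\zeta^{-2n}$ equals $0$, $-i\sqrt3$, $i\sqrt3$ according as $n\equiv 0,1,2\pmod3$, the only modes that can survive are $n=0$ and one conjugate pair $n=\pm\sqrt3\,\varepsilon L/(2\pi)$. As $\boldsymbol x$ is a simple, non-degenerate closed curve that pair must be $\pm1$, whence $\boldsymbol x(\tau)=\boldsymbol a_1e^{2\pi i\tau/L}+\overline{\boldsymbol a_1}e^{-2\pi i\tau/L}$ parametrizes an ellipse and $K$ is an ellipse.

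The hard part is the centroid step --- extracting the identity $\det(\boldsymbol c,\boldsymbol c')=\tfrac12\|\boldsymbol c\|^3\mu$ from the constant affine chord length and then seeing that it conspires with the elementary constancy of $\Vol_2(T_\sigma)$ to make $\mu$ periodic of period $L/3$; this is exactly where the density $\tfrac13$ enters essentially, since it is what forces the carousel to close up into a triangle (the analogous statements for rational densities $p/q$ would hinge on $q$-gon carousels, with heavier determinant bookkeeping). Once $G$ is pinned down, the reduction to the delay equation and the Fourier analysis are routine.
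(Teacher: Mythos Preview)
Your argument is correct and arrives at the conclusion by a genuinely different route from the paper.

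The paper first runs a synthetic argument on the tangent-line triangle $\triangle\hat{\boldsymbol x}\hat{\boldsymbol y}\hat{\boldsymbol z}$: it uses \eqref{eq:13} (i.e.\ \thref{thm:02}) together with the equal areas $\Vol_2(\triangle\hat{\boldsymbol x}\boldsymbol{zy})=\Vol_2(\triangle\hat{\boldsymbol y}\boldsymbol{xz})=\Vol_2(\triangle\hat{\boldsymbol z}\boldsymbol{yx})$ to force the division ratios $\lambda_i$ to equal $1$, so that $\triangle\boldsymbol{xyz}$ is the medial triangle of $\triangle\hat{\boldsymbol x}\hat{\boldsymbol y}\hat{\boldsymbol z}$; only then does it verify, by direct substitution of \eqref{eq:14}, that the centroid is fixed. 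The endgame is also different: from the medial-triangle relation the paper reads off that $\Pi^{\hat\delta}(K)$ is homothetic to $\Pi_\delta(K)$ with ratio $4$ about the common centroid and invokes \thref{thm:05} (hence \thref{thm:03} and Blaschke's affine-sphere theorem). You instead extract from the centroid the sharper fact $\boldsymbol r_1(\sigma)=-\tfrac12\boldsymbol x(\sigma+2L/3)$, i.e.\ $F_\delta(K)$ is homothetic to $K$ itself, and finish with a clean delay equation solved by Fourier analysis. Your centroid proof is also structurally different: it never touches \thref{thm:02} or the tangent-line triangle, relying only on the identity $\det(\boldsymbol c,\boldsymbol c')=\tfrac12\|\boldsymbol c\|^3\mu$ (a consequence of constant $\|\boldsymbol c\|$) and the constancy of $\Vol_2(T_\sigma)$. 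The trade-off: the paper's route recycles machinery already developed in the article (illumination body, affine normals), while yours is self-contained and yields the extra information that $K$ and $F_\delta(K)$ are themselves homothetic, at the cost of needing the affine arc-length parametrization and a Fourier step.

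One small quibble: the sentence ``from \eqref{eq:15} one sees that $\partial K$ is then strongly convex'' does not actually follow, since $\kappa_1=\|\boldsymbol c\|^3/|\boldsymbol c|^3$ involves only first derivatives of $\gamma$ and says nothing about $\kappa$. You need $\kappa>0$ to legitimately pass to the global affine parameter $\sigma$; this can be patched (for instance via \eqref{eq:13}, which propagates zeros of $\kappa$ along the carousel), and the paper's own argument tacitly makes the same assumption, so it is not a defect of your approach per se.
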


\begin{figure}
\includegraphics{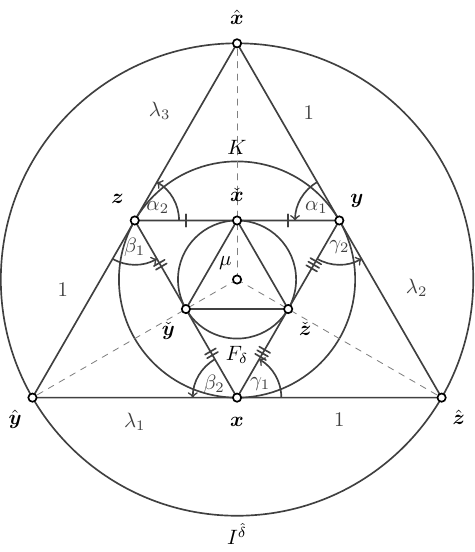}
\caption{Notation used in the proof of \thref{thm:07}}
\label{fig:07}
\end{figure}

\begin{proof}
We adopt the notation from \fref{fig:07}. Define also
$$\lambda_1\colonequals\frac{\hat{\boldsymbol y}\boldsymbol x}{\boldsymbol x\hat{\boldsymbol z}},\quad\lambda_2\colonequals\frac{\hat{\boldsymbol z}\boldsymbol y}{\boldsymbol y\hat{\boldsymbol x}},\quad\lambda_3\colonequals\frac{\hat{\boldsymbol x}\boldsymbol z}{\boldsymbol z\hat{\boldsymbol y}},$$
and observe that $\lambda_1\lambda_2\lambda_3=1$. Indeed, the law of sines yields
$$\lambda_1\lambda_2\lambda_3=\frac{\hat{\boldsymbol y}\boldsymbol x}{\boldsymbol x\hat{\boldsymbol z}}\frac{\hat{\boldsymbol z}\boldsymbol y}{\boldsymbol y\hat{\boldsymbol x}}\frac{\hat{\boldsymbol x}\boldsymbol z}{\boldsymbol z\hat{\boldsymbol y}}=\frac{\hat{\boldsymbol x}\boldsymbol z}{\boldsymbol y\hat{\boldsymbol x}}\frac{\hat{\boldsymbol y}\boldsymbol x}{\boldsymbol z\hat{\boldsymbol y}}\frac{\hat{\boldsymbol z}\boldsymbol y}{\boldsymbol x\hat{\boldsymbol z}}=\frac{\sin\alpha_1}{\sin\alpha_2}\frac{\sin\beta_1}{\sin\beta_2}\frac{\sin\gamma_1}{\sin\gamma_2}\overset{\eqref{eq:13}}{=}\frac{\kappa(\boldsymbol y)^{\frac{1}{3}}}{\kappa(\boldsymbol z)^{\frac{1}{3}}}\frac{\kappa(\boldsymbol z)^{\frac{1}{3}}}{\kappa(\boldsymbol x)^{\frac{1}{3}}}\frac{\kappa(\boldsymbol x)^{\frac{1}{3}}}{\kappa(\boldsymbol y)^{\frac{1}{3}}}=1.$$
Further, since $\Vol_2(\triangle\hat{\boldsymbol x}\boldsymbol{zy})=\Vol_2(\triangle\hat{\boldsymbol y}\boldsymbol{xz})=\Vol_2(\triangle\hat{\boldsymbol z}\boldsymbol{yx})$, we have
$$\frac{\lambda_3}{(\lambda_2+1)(\lambda_3+1)}=\frac{\lambda_1}{(\lambda_3+1)(\lambda_1+1)}=\frac{\lambda_2}{(\lambda_1+1)(\lambda_2+1)}.$$
Computing $\lambda_3$ from the first equation and substituting the result into the second equation, after simplification, reads $(\lambda_1-\lambda_2)(\lambda_1\lambda_2+\lambda_1+1)=0$. But since $\lambda_1,\lambda_2>0$, the only possibility is $\lambda_1=\lambda_2$. Hence, by symmetry, we have $\lambda_1=\lambda_2=\lambda_3$. When combined with the previous result, it immediately yields $\lambda_1=\lambda_2=\lambda_3=1$. Therefore, by the Second Theorem of Dupin, $\triangle\check{\boldsymbol x}\check{\boldsymbol y}\check{\boldsymbol z}$ is the medial triangle of $\triangle\boldsymbol x\boldsymbol y\boldsymbol z$, which in turn, by the above argument, is the medial triangle of $\triangle\hat{\boldsymbol x}\hat{\boldsymbol y}\hat{\boldsymbol z}$. In particular, all three triangles are pairwise homothetic with respect to the common centroid.\\

Following the idea of J. Bracho, L. Montejano, and D. Oliveros, we will show that the centroid of $\triangle\boldsymbol{xyz}$ is a fixed point (cf. \cite[Theorem~3]{Bracho2004}). Define $\mu\colonequals\frac{1}{3}(\boldsymbol x+\boldsymbol y+\boldsymbol z)$. It is enough to show that $\mu'(s)=\boldsymbol 0$ for every $s\in I$. Let $\gamma:I\to\mathbb R^2$ be a regular parametrization of $\partial K$, and let $t=t(s),u=u(s)$ be chosen so that $\boldsymbol x(s)=\gamma(s),\boldsymbol y(s)=\gamma(t(s)),\boldsymbol z(s)=\gamma(u(s))$. From \eqref{eq:14} we compute
\begin{equation}\label{eq:31}\frac{\mathrm dt}{\mathrm ds}=-\frac{\det(\gamma(t)-\gamma(s),\gamma'(s))}{\det(\gamma(t)-\gamma(s),\gamma'(t))},\quad\frac{\mathrm du}{\mathrm ds}=-\frac{\det(\gamma(u)-\gamma(s),\gamma'(s))}{\det(\gamma(u)-\gamma(s),\gamma'(u))}.\end{equation}
Further, because the sides of $\triangle\boldsymbol x\boldsymbol y\boldsymbol z$ are parallel to the sides of $\triangle\hat{\boldsymbol x}\hat{\boldsymbol y}\hat{\boldsymbol z}$, which in turn are tangent to $\partial K$ at the vertices of $\triangle\boldsymbol x\boldsymbol y\boldsymbol z$, we have
$$\det(\gamma(u)-\gamma(t),\gamma'(s))=0,\quad\det(\gamma(s)-\gamma(u),\gamma'(t))=0,\quad\det(\gamma(t)-\gamma(s),\gamma'(u))=0,$$
whence
\begin{equation}\label{eq:32}\gamma'(s)_2=\frac{\gamma(u)_2-\gamma(t)_2}{\gamma(u)_1-\gamma(t)_1}\gamma'(s)_1,\quad\gamma'(t)_2=\frac{\gamma(s)_2-\gamma(u)_2}{\gamma(s)_1-\gamma(u)_1}\gamma'(t)_1,\quad\gamma'(u)_2=\frac{\gamma(t)_2-\gamma(s)_2}{\gamma(t)_1-\gamma(s)_1}\gamma'(u)_1.\end{equation}
Substituting \eqref{eq:31} and \eqref{eq:32} to the formula for $\mu'$ yields $\boldsymbol 0$.\\

Finally, observe that $\hat{\boldsymbol x}\in\Pi^{\hat\delta}(K)$ is the image of $\check{\boldsymbol x}\in\Pi_\delta(K)$ under homothety with fixed center $\mu$ and fixed ratio $4$. Since we have this for every point $\hat{\boldsymbol x}\in\Pi^{\hat\delta}(K)$, it follows that the entire curve $\Pi^{\hat\delta}(K)$ is homothetic to $\Pi_\delta(K)$ with respect to the center $\mu$ and with ratio $4$. By \thref{thm:05}, this concludes the proof.
\end{proof}

Unfortunately, for perimetral density $\frac{1}{4}$ we were no longer able to find such a synthetic argument, since in our case the geometry is much less rigid than in \cite[Theorem~5]{Bracho2004}. While there are $6$ degrees of freedom, we have only $4$ equations, and thus the carousel does not need to be a parallelogram anymore.

\begin{remark}
For any rational perimetral density $\delta=\frac{p}{q}$, we can define functions $s=t_0(s),t_1(s),t_2(s),\ldots,t_q(s)$ being the coordinates of the vertices of a carousel with $q$ chairs. The system of linear equations
$$\det(\gamma(t_i)-\gamma(t_{i-1}),\gamma'(t_i)+\gamma'(t_{i-1}))=0,\quad i=1,2,\ldots,q$$
yields the expressions for $t_i'(s)$. Further, the system of linear equations
$$\frac{\mathrm d}{\mathrm ds}\frac{\det(\gamma'(t_{i-1}),\gamma(t_i)-\gamma(t_{i-1}))\det(\gamma(t_i)-\gamma(t_{i-1}),\gamma'(t_i))}{\det(\gamma'(t_{i-1}),\gamma'(t_i))}=0,\quad i=1,2,\ldots,q$$
yields the expressions for $\gamma''(t_i)_2$. Now, our first equation $E_1$ is $t_q(s)=t_0(s)=s$. It depends only on first-order derivatives of $\gamma$ at points $t_i$. We obtain $E_2$ by differentiating $E_1$ with respect to $s$. After elimination of $t_i'(s)$ and $\gamma''(t_i)$ (sic!), it again depends only on first-order derivatives of $\gamma$ at points $t_i$, and second-order derivatives of $\gamma$ at $s$. By iterating this algorithm, we should be able to obtain sufficiently many equations to eliminate all variables $\gamma(t_i)$, $i=1,2,\ldots,q$, thus obtaining a differential equation for $\gamma$ at $s$. However, expressions quickly become incomprehensible even for a computer, and elimination of $\gamma''(t_i)$ requires using other equations $E_i$, which are non-linear. Also, the final equation will most likely be of high order.
\end{remark}

%%%%%%%%%%%%%%%%%%%%%%%%%%%%%%%%%%%%%%%%%%%%%%%%%%%%%%%%%%%%%%%%%%%%%%
\section*{Acknowledgments}
%%%%%%%%%%%%%%%%%%%%%%%%%%%%%%%%%%%%%%%%%%%%%%%%%%%%%%%%%%%%%%%%%%%%%%

We would like to thank Prof. D.~Ryabogin for getting us interested in floating bodies, sharing his extensive knowledge with us, making us aware of the more profound context of our results, and the exceptional amount of effort spent on editorial work, Prof. M.~Angeles Alfonseca for helping us write the historical section on floating bodies and many inspiring discussions that eventually became the foundation for our work, Prof. S.~Myroshnychenko for bringing the bodies of illumination to our attention, and Prof. F.~Nazarov for numerous invaluable remarks and comments that helped us improve the paper.

\bibliography{references}{}
\bibliographystyle{amsplain}

\end{document}